\DeclareSymbolFont{rsfs}{U}{rsfs}{m}{n}
\DeclareSymbolFontAlphabet{\mathrsfs}{rsfs}
\definecolor{darkspringgreen}{rgb}{0.09, 0.45, 0.27}
\definecolor{deepjunglegreen}{rgb}{0.0, 0.29, 0.29}
\newenvironment{NB}{
\color{red}{\bf NB}. \footnotesize
}{}
\newenvironment{NB2}{
\color{blue}{\bf NB2}. \footnotesize
}{}
\newenvironment{NB3}{
\color{blue}{\bf new NB}. \footnotesize
}{}
\let\RR\relax
\let\bN\relax
\let\SL\relax
\let\PGL\relax
\let\oldref=\ref
\renewcommand{\ref}[1]{%
  \def\@mystring{affine_pre-#1}%
  \@ifundefined{r@\@mystring}{%
    \def\@mystring{blowup_pre-#1}%
    \@ifundefined{r@\@mystring}{%
      \def\@mystring{Coulomb2-#1}%
      \@ifundefined{r@\@mystring}{%
        \cref{#1}}{%
        \cite[\namecref{Coulomb2-#1}\oldref{Coulomb2-#1}]{main}%
      }%
    }%
    {%
      \cite[\namecref{blowup_pre-#1}\oldref{blowup_pre-#1}]{blowup}%
   }%
 }%
 {%
   \cite[\namecref{affine_pre-#1}\oldref{affine_pre-#1}]{affine}%
 }%
}
\renewcommand{\eqref}[1]{%
  \def\@mystring{affine_pre-#1}%
  \@ifundefined{r@\@mystring}{%
    \def\@mystring{blowup_pre-#1}%
    \@ifundefined{r@\@mystring}{%
      \def\@mystring{Coulomb2-#1}%
      \@ifundefined{r@\@mystring}{%
        \textup{\tagform@{\oldref{#1}}}}{%
        \text{\cite[(\oldref{Coulomb2-#1})]{main}}%
      }%
    }%
    {%
      \text{\cite[(\oldref{blowup_pre-#1})]{blowup}}%
    }%
  }%
  {%
    \text{\cite[(\oldref{affine_pre-#1})]{affine}}%
  }%
}
\crefname{Theorem}{Theorem\xspace}{Theorems}
\crefname{section}{\S}{\S\S}
\crefname{Lemma}{Lemma\xspace}{Lemmas\xspace}
\crefname{Proposition}{Proposition\xspace}{Propositions\xspace}
\crefname{Corollary}{Corollary\xspace}{Corollaries\xspace}
\crefname{Definition}{Definition}{Definitions}
\crefname{Remark}{Remark\xspace}{Remarks\xspace}
\crefname{Remarks}{Remark\xspace}{Remarks\xspace}
\crefname{Conjecture}{Conjecture\xspace}{Conjectures\xspace}
\crefname{figure}{Figure\xspace}{Figure\xspace}
\crefname{appendix}{Appendix\xspace}{Appendices\xspace}
\crefname{equation}{}{}
\renewcommand{\thesubsection}{\thesection(\@roman\c@subsection)}
\newenvironment{aenume}{%
  \begin{enumerate}%
  }{\end{enumerate}}
\newcounter{number}
\newtheorem{Theorem}[equation]{Theorem}
\newtheorem{Corollary}[equation]{Corollary}
\newtheorem{Lemma}[equation]{Lemma}
\newtheorem{Proposition}[equation]{Proposition}
\theoremstyle{definition}
\theoremstyle{remark}
\newtheorem{Remark}[equation]{Remark}
\numberwithin{equation}{section}
\newcommand{\secref}[1]{\ref{#1}}
\newcommand{\defeq}{\overset{\operatorname{\scriptstyle def.}}{=}}
\newcommand{\CC}{{\mathbb C}}
\newcommand{\ZZ}{{\mathbb Z}}
\newcommand{\QQ}{{\mathbb Q}}
\newcommand{\RR}{{\mathbb R}}
\newcommand{\proj}{{\mathbb P}}
\newcommand{\SL}{\operatorname{\rm SL}}
\newcommand{\GL}{\operatorname{GL}}
\newcommand{\PGL}{\operatorname{PGL}}
\newcommand{\gl}{\operatorname{\mathfrak{gl}}}
\newcommand{\Spec}{\operatorname{Spec}\nolimits}
\newcommand{\Proj}{\operatorname{Proj}\nolimits}
\newcommand{\End}{\operatorname{End}}
\newcommand{\Hom}{\operatorname{Hom}}
\newcommand{\Ima}{\operatorname{Im}}
\newcommand{\id}{\operatorname{id}}
\renewcommand{\MR}[1]{}
\newcommand{\dslash}{/\!\!/}
\newcommand{\bM}{\mathbf M}
\newcommand{\bN}{\mathbf N}
\newcommand{\wt}{\operatorname{wt}}
\newcommand{\shfO}{\mathcal O}
\newcommand{\tslash}{/\!\!/\!\!/}
\newcommand{\tslabar}{\mathbin{
\setbox0=\hbox{/\!\!/\!\!/}\rule[0.4\ht0]{\wd0}{.3\dp0}\kern-\wd0\box0}}
\newcommand{\Gr}{\mathrm{Gr}}
\newcommand{\cR}{\mathcal R}
\newcommand{\cF}{\mathcal F}
\newcommand{\cO}{\mathcal O}
\newcommand{\scP}{\mathscr P}
\newcommand{\scA}{\mathscr A}
\newcommand{\cA}[1][{}]{%
  \@ifmtarg{#1}%
  {\mathcal A}
  {\mathcal A(#1)}
}
\newcommand{\cAh}[1][{}]{%
  \@ifmtarg{#1}%
  {\mathcal A_\hbar}
  {\mathcal A_\hbar(#1)}
}
\newcommand{\ft}{\mathfrak t}
\newcommand{\bNT}{\bN_T}
\newcommand{\scAfor}{\scA^{\mathrm{for}}}
\newcommand{\cL}{\mathcal L}
\newcommand{\po}{\ar@{}[dr]|{\text{\pigpenfont R}}}
\newcommand{\pb}{\ar@{}[dr]|{\text{\pigpenfont J}}}
\newcommand{\pp}{\ar@{}[dr]|{\text{\pigpenfont P}}}
\newcommand{\sbullet}{%
  \hbox{\fontfamily{lmr}\fontsize{.4\dimexpr(\f@size pt)}{0}\selectfont\textbullet}}
\DeclareRobustCommand{\mathbullet}{\accentset{\sbullet}}
\newcommand{\cM}{\mathcal M}
\newcommand{\BA}{{\mathbb{A}}}
\newcommand{\BC}{{\mathbb{C}}}
\newcommand{\BG}{{\mathbb{G}}}
\newcommand{\BN}{{\mathbb{N}}}
\newcommand{\BP}{{\mathbb{P}}}
\newcommand{\BZ}{{\mathbb{Z}}}
\newcommand{\bp}{{\mathbf{p}}}
\newcommand{\CB}{{\mathcal{B}}}
\newcommand{\CalD}{{\mathcal{D}}}
\newcommand{\CF}{{\mathcal{F}}}
\newcommand{\CG}{{\mathscr{G}}}
\newcommand{\CL}{{\mathcal{L}}}
\newcommand{\CM}{{\mathcal{M}}}
\newcommand{\CO}{{\mathcal{O}}}
\newcommand{\CR}{{\mathcal{R}}}
\newcommand{\cS}{{\mathcal{S}}}
\newcommand{\CW}{{\mathcal{W}}}
\newcommand{\oW}{\overline{\mathcal{W}}{}}
\newcommand{\iso}{\overset{\sim}{\longrightarrow}}
\newcommand{\sz}{{\mathsf{z}}}
\newcommand{\sy}{{\mathsf{y}}}
\newcommand{\sw}{{\mathsf{w}}}
\newcommand{\unl}{\underline}
\newcommand{\ol}{\overline}
\newcommand{\on}{\operatorname}
\newcommand{\wit}{\widetilde}
\newcommand{\oZ}{\mathring{Z}}
\newcommand{\bZ}{\mathbullet{Z}}
\newcommand{\oE}{\mathring{E}}
\newcommand{\oA}{\mathring{\mathbb A}}
\newcommand{\oG}{\mathring{\mathbb G}}
\newcommand{\alphavee}{\alpha^{\!\scriptscriptstyle\vee}}
\newcommand{\tilscA}{\scA}
\newcommand{\bz}{\mathbf z}
\newcommand{\bv}{\mathbf v}
\newcommand{\bw}{\mathbf w}
\newcommand{\dil}{\mathrm{dil}}
\newcommand{\ialpha}{\alpha}
\newcommand{\iialp}{\mathsf n}
\newcommand{\ibeta}{\beta}
\newcommand{\bpi}{{\boldsymbol{\pi}}}
\newcommand{\vk}{\varkappa}
\begin{document}

\title[Line bundles on Coulomb branches]{Line bundles on Coulomb branches
}
\author[A.~Braverman]{Alexander Braverman}
\address{
Department of Mathematics,
University of Toronto and Perimeter Institute of Theoretical Physics,
Waterloo, Ontario, Canada, N2L 2Y5
}
\email{braval@math.toronto.edu}
\author[M.~Finkelberg]{Michael Finkelberg}
\address{National Research University Higher School of Economics,
  Russian Federation, Department of Mathematics, 6 Usacheva st, Moscow 119048;
  Skolkovo Institute of Science and Technology; 
Institute for Information Transmission Problems}
\email{fnklberg@gmail.com}
\author[H.~Nakajima]{Hiraku Nakajima}
\address{Research Institute for Mathematical Sciences,
Kyoto University, Kyoto 606-8502,
Japan}
\email{nakajima@kurims.kyoto-u.ac.jp}
\curraddr{Kavli Institute for the Physics and Mathematics of the Universe
  (WPI), The University of Tokyo,
  5-1-5 Kashiwanoha, Kashiwa, Chiba, 277-8583,
  Japan
}
\email{hiraku.nakajima@ipmu.jp}

\subjclass[2000]{}
\begin{abstract}
  This is the third companion paper of \cite{main}. When a gauge
  theory has a flavor symmetry group, we construct a partial
  resolution of the Coulomb branch as a variant of the definition. We
  identify the partial resolution with a partial resolution of a
  generalized slice in the affine Grassmannian, Hilbert scheme of
  points, and resolved Cherkis bow variety for a quiver gauge theory
  of type $ADE$ or affine type $A$.
\end{abstract}

\maketitle

\setcounter{tocdepth}{2}

\section{Introduction}

Let $G$ be a complex reductive group and $\bM$ its symplectic
representation of a form $\bN\oplus\bN^*$. ($\bN$ will be fixed
hereafter.) In \cite{2015arXiv150303676N,main} we gave a
mathematically rigorous definition of the Coulomb branch of a $3d$
$\mathcal N=4$ gauge theory associated with $(G,\bM)$ as follows. We
introduce an infinite dimensional variety $\cR = \cR_{G,\bN}$ (the
variety of triples), and define a convolution product on its
$G_\cO = G[[z]]$-equivariant homology $H^{G_\cO}_*(\cR)$, which is
commutative. Then we define the Coulomb branch
$\cM_C\equiv \cM_C(G,\bN)$ as the spectrum of $H^{G_\cO}_*(\cR)$. It
is an affine algebraic variety.

Suppose that we have a flavor symmetry, i.e.\ $\bN$ is a
representation of a larger group $\tilde G$ containing $G$ as a normal
subgroup. We further assume $G_F := \tilde G/G$ is a torus. Then we
can consider the Coulomb branch $\cM_C(\tilde G,\bN)$ for the larger
group $\tilde G$. We showed that the original $\cM_C(G,\bN)$ is the
Hamiltonian reduction $\cM_C(\tilde G,\bN)\tslash G_F^\vee$
of $\cM_C(\tilde G,\bN)$ by the dual torus $G_F^\vee$, see
\ref{prop:reduction}.
See \cite[\S5]{2015arXiv150303676N} for a motivation of this
statement, and references in physics literature.
Since $\cM_C(G,\bN)$ is a hamiltonian reduction by a torus, one can
take the reduction at a different value of the moment map, or can
consider a GIT quotient $\cM_C^\vk(G,\bN)$ with respect to a
stability condition, which is a character
$\vk\colon G_F^\vee\to\CC^\times$.
The former gives a deformation of $\cM_C(G,\bN)$ parametrized by
$\Spec H^*_{G_F}(\mathrm{pt})$. The latter gives a quasi-projective
variety $\cM_C^\vk(G,\bN)$ equipped with a projective morphism
$\bpi\colon\cM_C^\vk(G,\bN)\to\cM_C(G,\bN)$. This is birational.
See \ref{rem:birational} below.

We could understand this construction as follows. (See
\ref{subsec:flav-symm-group2}.) Let us denote the variety of triples
for the larger group $(\tilde G,\bN)$ by $\tilde\cR$. Let $\tilde\pi$
be the natural projection $\tilde\cR\to \Gr_{G_F}$. We identify
$\Gr_{G_F}$ with the coweight lattice of $G_F$, which is the weight
lattice of $G_F^\vee$. For a coweight $\vk$ of $G_F$, the
inverse image $\pi^{-1}(\vk)$ is denoted by
$\tilde\cR^\vk$. (In \ref{subsec:flav-symm-group2} a coweight
was denoted by $\lambda_F$.) Note that $\tilde\cR^0$ is nothing but
the original variety of triples $\cR$.
The convolution product defines a multiplication
\begin{equation*}
  H^{G_\cO}_*(\tilde\cR^\vk)\otimes_\CC
  H^{G_\cO}_*(\tilde\cR^{\vk'}) \to
  H^{G_\cO}_*(\tilde\cR^{\vk+\vk'}).
\end{equation*}
In particular $H^{G_\cO}_*(\tilde\cR^\vk)$ is an
$H^{G_\cO}_*(\cR)$-module, hence defines a sheaf on
$\cM_C(G,\bN)= \Spec(H^{G_\cO}_*(\cR))$. We only take coweights in
$\ZZ_{\ge 0}\vk$ for a fixed $\vk$, and consider
$\Proj(\bigoplus_{n\ge 0} H^{G_\cO}_*(\tilde\cR^{n\vk}))$. This is
nothing but the GIT quotient $\cM_C^\vk(G,\bN)$.
It is a quasi projective variety, equipped with a natural projective
morphism $\bpi\colon\cM_C^\vk(G,\bN)\to\cM_C(G,\bN)$.
We have
$H^{G_\cO}_*(\tilde\cR^\vk) =
\Gamma(\cM_C(G,\bN),\bpi_*\shfO_{\cM_C^\vk(G,\bN)}(1))$.

In this paper, we study $\cM^\vk_C(G,\bN)$ for a framed quiver
gauge theory of type $ADE$ or affine $A$. The original Coulomb branch
$\cM_C(G,\bN)$ was identified with a generalized slice in the affine
Grassmannian \cite{blowup}, and a Cherkis bow variety
\cite{2016arXiv160602002N} respectively. In both cases the variety has
a natural partial resolution (actual resolution for type $A$ or affine
type $A$), and we identify it with the GIT quotient.

The paper is organized as follows. In \ref{sec:mult} we show that the
multiplication on $\bigoplus H^{G_\cO}_*(\tilde\cR^{n\vk})$ is equal
to one given by the tensor product of line bundles for a framed quiver
gauge theory of type $A_1$. This case was studied in detail in
\ref{line Klein}$\sim$\ref{Klein via}, and this section is its
supplement.
In \ref{hilbert} we show that the determinant line bundle on the
Hilbert scheme of points in $\BA^2$ arises in our construction.
In \ref{line Cherkis} we study the Coulomb branch of a framed quiver
gauge theory of affine type $A$ and identify our construction of a
partial resolution with a bow variety with an appropriate stability
condition.
In \ref{det slice} we study the Coulomb branch of a framed quiver
gauge theory of type $ADE$ and identify our construction of a partial
resolution with a convolution diagram over a generalized slice in the
affine Grassmannian.

\begin{Remark}\label{rem:birational}
  Let us show that $\bpi$ is birational. By \ref{sec:z} we can replace
  the representation $\bN$ by $0$. Thus we need to compare
  $\cM_C(\tilde G,0)\tslash_{\varkappa} G_F^\vee$ and
  $\cM_C(G,0)$. Note that we have a finite covering $G'_F$ of $G_F$
  such that the corresponding covering of $\tilde G$ becomes the
  product $G\times G'_F$. Moreover we can replace $\varkappa$ by its
  positive power, hence we may assume it lifts to $G'_F$. Then we get
  $\cM_C(G\times G_F,0)\tslash_{\varkappa} G_F^\vee = \cM_C(G,0)
  \times \cM_C(G_F,0) \tslash_{\varkappa} G_F^\vee$, which is
  obviously $\cM_C(G,0)$.
\end{Remark}

\subsection*{Notation}

We basically follow the notation in \cite{main}, \cite{blowup} and
\cite{affine}.


\subsection*{Acknowledgments}

We thank
M.~Bershtein,
R.~Bezrukavnikov,
D.~Gaitsgory,
and
A.~Ob\-lomkov
for the useful discussions.
%

A.B.\  was partially supported by the NSF grant DMS-1501047.
M.F.\ was partially funded within the framework of the HSE University Basic Research Program and
the Russian Academic Excellence Project `5-100'.
The research of H.N.\ is supported by JSPS Kakenhi Grant Numbers
25220701, 
16H06335. 
A part of this work was done while H.N.\ was in residence at the
Mathematical Sciences Research Institute in Berkeley, California
during the semester of 2018 Spring with support by the National Science
Foundation under Grant No.~140140.

\section{Multiplication morphism}\label{sec:mult}

This section is a supplement to \ref{line Klein}$\sim$\ref{Klein via}.

Let $N$ be an integer greater than $1$. Let $\cS_N$ denote the
hypersurface $ZY=W^N$ in $\BA^3$, $\bpi\colon\wit\cS_N\to\cS_N$ its
minimal resolution, and $\cS_N^\circ := \cS_N\setminus \{0\}$.
We change $z$, $y$, $w$ to capital letters to avoid a confusion later.
A weight $\lambda$ of $\SL(N)$ defines a line bundle $\cL_\lambda$
over $\wit\cS_N$. Let $\cF_\lambda$ denote the torsion free sheaf
$\bpi_* \cL_\lambda$ on $\cS_N$ for dominant $\lambda$.  (To be
consistent with other parts of this paper, we should denote a weight
by $\vk$, but we keep notation in \ref{monopole}.)
Let us recall the notation briefly. We identify $\cS_N$ with
$\BA^2\dslash (\ZZ/N\ZZ)$, where $\zeta\in\ZZ/N\ZZ$ acts on $\BA^2$ by
$\zeta\cdot(u,v) = (\zeta u, \zeta^{-1}v)$. We have $W=uv$, $Z=u^N$,
$Y=v^N$. The line bundle $\cL_{\omega_i}$ for a fundamental root
$\omega_i$ is defined so that
$\Gamma(\wit\cS_N,\cL_{\omega_i}) = \Gamma(\cS_N,\cF_{\omega_i})$ is
the space of the semi-invariants $\CC[\BA^2]^{\chi_i}$ with
$\chi_i(\zeta) = \zeta^i$ ($i=1,\dots,N-1$). If we identify a weight
$\lambda$ of $\SL(N)$ with $(\lambda_1\ge\dots\ge\lambda_N)$ up to
simultaneous shifts of all $\lambda_i$, we have
$\cL_\lambda = \bigotimes
\cL_{\omega_i}^{\otimes(\lambda_i-\lambda_{i+1})}$.

We realize $\cS_N$ and $\wit\cS_N$ as Coulomb branches as follows: $V$
with $\dim V = 1$, $W$ with $\dim W = N$, $G = \GL(V) = \CC^\times$,
$\tilde G = (\GL(V)\times T(W))/\CC^\times$, where $T(W)$ is a maximal torus of
$\GL(W)$ consisting of diagonal matrices, $\CC^\times$ is the diagonal scalar
subgroup, $G_F = T(W)/\CC^\times$, and $\bN = \Hom(W,V)$. Then
$\cM_C(G,\bN)$ is $\cS_N$ and
$\Gamma(\cS_N,\cF_\lambda) \cong H^{G_\cO}_*(\tilde\cR^\lambda)$. Note that
$H^{G_\cO}_*(\tilde\cR^\lambda)$ is denoted by
$i^!_\lambda\tilscA^{\on{for}}$ in \ref{monopole}, as it is a costalk
of a ring object $\tilscA^{\on{for}}$ at $\lambda$.

We choose isomorphisms 
$\Gamma(\cS_N,\CF_\lambda)\iso H^{G_\cO}_*(\tilde\cR^\lambda)
\begin{NB}
i^!_\lambda\tilscA^{\on{for}}  
\end{NB}%
$ for any $\lambda$ (defined uniquely up to multiplication by a
scalar).

\begin{Lemma}
\label{tensor surj}
The multiplication morphism $\Gamma(\cS_N,\CF_\lambda)\otimes\Gamma(\cS_N,\CF_\mu)
\to\Gamma(\cS_N,\CF_{\lambda+\mu})$ \textup(resp.\
$H^{G_\cO}_*(\tilde\cR^\lambda)\otimes H^{G_\cO}_*(\tilde\cR^\mu)
\to
H^{G_\cO}_*(\tilde\cR^{\lambda+\mu})$
\begin{NB}
$i^!_\lambda\tilscA^{\on{for}}
\otimes i^!_\mu\tilscA^{\on{for}}\to i^!_{\lambda+\mu}\tilscA^{\on{for}}$  
\end{NB}%
\textup) is surjective
for any dominant $\lambda,\mu$.
\end{Lemma}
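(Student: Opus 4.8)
My plan is to prove surjectivity on the geometric side, i.e.\ for the map $\Gamma(\cS_N,\CF_\lambda)\otimes\Gamma(\cS_N,\CF_\mu)\to\Gamma(\cS_N,\CF_{\lambda+\mu})$, where the multiplication is the one coming from the tensor product of line bundles on $\wit\cS_N$ (the two descriptions agree by \ref{line Klein}$\sim$\ref{Klein via}, which identify the convolution product with the tensor-product multiplication; indeed that identification is one of the points being supplemented here). Since $\bpi\colon\wit\cS_N\to\cS_N$ is a minimal resolution and $\CF_\lambda=\bpi_*\cL_\lambda$, and since for dominant $\lambda$ one has $R^1\bpi_*\cL_\lambda=0$ (the exceptional fiber is a chain of $\proj^1$'s and $\cL_\lambda$ restricts to nonnegative degrees on each component, so higher cohomology vanishes), we get $\Gamma(\cS_N,\CF_\lambda)=\Gamma(\wit\cS_N,\cL_\lambda)$. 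So the claim becomes: the tensor-product map $\Gamma(\wit\cS_N,\cL_\lambda)\otimes\Gamma(\wit\cS_N,\cL_\mu)\to\Gamma(\wit\cS_N,\cL_{\lambda+\mu})$ is surjective for dominant $\lambda,\mu$.

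The key step is to make everything explicit on $\BA^2$ via the identification $\cS_N=\BA^2\dslash(\ZZ/N)$, $W=uv$, $Z=u^N$, $Y=v^N$. For a fundamental weight $\omega_i$, $\Gamma(\wit\cS_N,\cL_{\omega_i})=\CC[u,v]^{\chi_i}$, the $\chi_i$-semi-invariants, spanned by monomials $u^av^b$ with $a-b\equiv i\pmod N$. More generally, writing a dominant weight as $\lambda=\sum(\lambda_i-\lambda_{i+1})\omega_i$ with $\lambda_1\ge\cdots\ge\lambda_N$, one has $\cL_\lambda=\bigotimes\cL_{\omega_i}^{\otimes(\lambda_i-\lambda_{i+1})}$, and I would identify $\Gamma(\wit\cS_N,\cL_\lambda)$ with an explicit module over $\CC[\cS_N]=\CC[Z,Y,W]/(ZY-W^N)$ — concretely, the span inside a suitable localization of the monomials $u^av^b$ subject to the congruence $a-b\equiv \lambda_1+\cdots$ (the relevant residue) $\pmod N$ together with finitely many integrality bounds on $a,b$ dictated by the $\lambda_i$ (these bounds are exactly what distinguishes different dominant weights with the same residue and correspond to the vanishing order along the exceptional components). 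Granting this description, the tensor product $\Gamma(\wit\cS_N,\cL_\lambda)\otimes\Gamma(\wit\cS_N,\cL_\mu)\to\Gamma(\wit\cS_N,\cL_{\lambda+\mu})$ is just multiplication of monomials $u^av^b\cdot u^{a'}v^{b'}=u^{a+a'}v^{b+b'}$, and surjectivity amounts to the combinatorial statement that every lattice point in the "region'' defining $\Gamma(\cL_{\lambda+\mu})$ is a sum of a point in the region for $\lambda$ and a point in the region for $\mu$. This is an elementary semigroup/Minkowski-sum argument: reduce first to the case where one factor is a fundamental weight by induction on the number of boxes, and then to $N=2$-type local checks on each node of the exceptional chain, where the bounds are one-dimensional and the additivity of bounds under $\lambda\mapsto\lambda+\mu$ makes it transparent.

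The main obstacle, and the part that needs care rather than routine checking, is pinning down the precise integrality bounds that cut out $\Gamma(\wit\cS_N,\cL_\lambda)$ as a submodule of $\CC[u,v][\tfrac1{uv}]$ (or of the function field of $\cS_N$) — i.e.\ correctly matching the weight $\lambda=(\lambda_1\ge\cdots\ge\lambda_N)$ with the multidegree constraints on monomials via the geometry of the resolution $\wit\cS_N$. Once the dictionary "dominant weight $\leftrightarrow$ admissible region of exponents'' is correct and one verifies that the region for $\lambda+\mu$ is the Minkowski sum (equivalently: the bounds add), surjectivity is immediate. I would handle this by first treating the fundamental weights $\omega_i$ directly from the definition $\Gamma(\wit\cS_N,\cL_{\omega_i})=\CC[\BA^2]^{\chi_i}$, then using $\cL_\lambda=\bigotimes\cL_{\omega_i}^{\otimes(\lambda_i-\lambda_{i+1})}$ to get the general region as a sum of fundamental ones — which, by construction, already exhibits $\Gamma(\wit\cS_N,\cL_\lambda)$ as spanned by products of sections of the $\cL_{\omega_i}$, and thus reduces the Lemma to the surjectivity of $\Gamma(\cL_\nu)\otimes\Gamma(\cL_{\omega_i})\to\Gamma(\cL_{\nu+\omega_i})$, proved by the same monomial bookkeeping.
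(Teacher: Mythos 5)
Your overall strategy is the same as the paper's: reduce to the case where one factor is a fundamental weight $\omega_n$, and then exploit the fact that the modules decompose into one-dimensional bigraded eigenspaces (your ``monomial basis'' is exactly the multiplicity-free $\BC^\times\times\BC^\times$-character that the paper reads off from \ref{monopole Klein}). But as written the proposal has a genuine gap: the decisive step is precisely the one you defer. You say ``granting this description'' of $\Gamma(\wit\cS_N,\cL_\lambda)$ as a region of exponents, and you assert that the region for $\lambda+\mu$ is the Minkowski sum of the regions for $\lambda$ and $\mu$, calling this ``transparent.'' It is not automatic: in the paper's coordinates the eigenspaces are indexed by $m\in\BZ$ with $t$-degree $\sum_i|\lambda_i-m|$, and factoring the summand for $\lambda+\omega_n$ at a given $m$ requires choosing $m',m''$ with $m=m'+m''$ so that the triangle inequality $\sum_i|(\lambda+\omega_n)_i-m|\le\sum_i|\lambda_i-m'|+\sum_i|(\omega_n)_i-m''|$ becomes an equality; the naive choices fail, and one needs the case split $m\ge\lambda_n+1$ (take $m'=m-1$, $m''=1$) versus $m\le\lambda_n$ (take $m'=m$, $m''=0$). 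That case analysis, together with an actual identification of $\Gamma(\cS_N,\CF_\lambda)$ for non-fundamental $\lambda$ (which is \emph{not} just a semi-invariant subspace of $\BC[u,v]$ but is cut out by valuation conditions along the exceptional curves), is the entire content of the lemma, and neither is carried out.

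A second problem is your treatment of the homology side. You propose to prove surjectivity only for $\Gamma(\cS_N,\CF_\lambda)\otimes\Gamma(\cS_N,\CF_\mu)\to\Gamma(\cS_N,\CF_{\lambda+\mu})$ and transfer it to $H^{G_\cO}_*(\tilde\cR^\lambda)\otimes H^{G_\cO}_*(\tilde\cR^\mu)\to H^{G_\cO}_*(\tilde\cR^{\lambda+\mu})$ via the identification of the convolution product with the tensor-product multiplication. In this paper that compatibility is \ref{tensor Klein}, which is proved \emph{after} the present lemma (and only up to scalar, generically, using torsion-freeness arguments); invoking it here inverts the logical order. The paper instead runs the identical character argument on the homology side directly, using the monopole formula to get the same multiplicity-free bigraded decomposition, plus the observations that the multiplication respects the bigrading and is generically an isomorphism by localization. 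You would need to add those two points to make the homological statement self-contained.
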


\begin{proof}
  It suffices to consider the case
  $\mu=\omega_n=(1,\ldots,1,0,\ldots,0)$ ($n$ 1's) for
  $1\le n\le N-1$.
  \begin{NB}
    Added by HN, May 4.
  \end{NB}%
Recall that the $\BC^\times\times\BC^\times$-character of 
$\Gamma(\cS_N,\CF_{\lambda+\mu})$ given by~\ref{monopole Klein} is multiplicity
free. So it suffices to represent each summand
$x^{\sum_{i=1}^N((\lambda+\omega_n)_i-m)}t^{\sum_{i=1}^N|(\lambda+\omega_n)_i-m|}$ as a product
of two summands $x^{\sum_{i=1}^N(\lambda_i-m')}t^{\sum_{i=1}^N|\lambda_i-m'|}$ and
$x^{\sum_{i=1}^N((\omega_n)_i-m'')}t^{\sum_{i=1}^N|(\omega_n)_i-m''|}$. Now if 
$m\geq\lambda_n+1$, we take $m'=m-1,\ m''=1$, and if $m\leq\lambda_n$, we take
$m'=m,\ m''=0$.
\begin{NB}
  If $m\ge\lambda_n+1$, we have $m\ge\lambda_i+1$ for $n\le i\le
  N-1$. Therefore
  $|\lambda_i - (m-1)| = m - 1 - \lambda = |\lambda_i - m| - 1$ for
  $n \le i \le N-1$. So we miss $N-1-n$. It is compensated by
  $|(\omega_n)_i-1|$.

  If $m\le\lambda_n$, we have $m\le \lambda_i$ for $1\le i\le n$. Then
  $|\lambda_i-m| = |\lambda_i + 1 - m| - 1$. Hence we miss $n$. It is
  compensated by $|(\omega_n)_i|$.
\end{NB}%
The same argument works for $H^{G_\cO}_*(\tilde\cR^?)$
\begin{NB}
the costalks of $\tilscA^{\on{for}}$
\end{NB}%
due to the monopole formula.
\begin{NB}
  HN on May 4: It is implicitly used that the multiplication is
  $\CC^\times\otimes\CC^\times$-equivariant, and the image of
  $\text{nonzero}\otimes\text{nonzero}$ is nonzero. For example, if
  the multiplication morphism is just zero. Where do you find a
  contradiction in your argument ? The following explanation is fine.
\end{NB}%
Indeed, the morphism $H^{G_\cO}_*(\tilde\cR^\lambda)\otimes H^{G_\cO}_*(\tilde\cR^\mu)
\to H^{G_\cO}_*(\tilde\cR^{\lambda+\mu})$ respects the bigrading. And
the induced morphism $H^{G_\cO}_*(\tilde\cR^\lambda)\otimes_{H^{G_\CO}_*(\CR)}
H^{G_\cO}_*(\tilde\cR^\mu)\to H^{G_\cO}_*(\tilde\cR^{\lambda+\mu})$ is an isomorphism
generically due to the localization theorem.
\end{proof}

\begin{Lemma}
\label{tensor Klein}
The diagram 
$$\begin{CD}
\Gamma(\cS_N,\CF_\lambda)\otimes_{\BC[\cS_N]}\Gamma(\cS_N,\CF_\mu) @>\sim>>
H^{G_\cO}_*(\tilde\cR^\lambda)\otimes_{H^{G_\cO}_*(\cR)} H^{G_\cO}_*(\tilde\cR^\mu) \\
@VVV @VVV \\
\Gamma(\cS_N,\CF_{\lambda+\mu}) @>\sim>> H^{G_\cO}_*(\tilde\cR^{\lambda+\mu})
\end{CD}
\begin{NB}
\begin{CD}
\Gamma(\cS_N,\CF_\lambda)\otimes_{\BC[\cS_N]}\Gamma(\cS_N,\CF_\mu) @>\sim>>
i^!_\lambda\tilscA^{\on{for}}\otimes_{i^!_0\tilscA^{\on{for}}}i^!_\mu\tilscA^{\on{for}} \\
@VVV @VVV \\
\Gamma(\cS_N,\CF_{\lambda+\mu}) @>\sim>> i^!_{\lambda+\mu}\tilscA^{\on{for}}
\end{CD}
\end{NB}%
$$
commutes up to multiplication by a scalar for any dominant $\lambda,\mu$.
\end{Lemma}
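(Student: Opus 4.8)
The plan is to reduce the commutativity of the square to a statement about one-dimensional spaces, exploiting the multiplicity-freeness of the bigrading that was used in the proof of \ref{tensor surj}. Both the algebro-geometric multiplication map $\Gamma(\cS_N,\CF_\lambda)\otimes_{\BC[\cS_N]}\Gamma(\cS_N,\CF_\mu)\to\Gamma(\cS_N,\CF_{\lambda+\mu})$ and the convolution map on equivariant homology are $\BC^\times\times\BC^\times$-equivariant (the first torus being the one acting on $\cS_N = \BA^2\dslash(\ZZ/N\ZZ)$, the second the cohomological/loop-rotation grading, which on the homology side is the monopole formula grading). Hence all four maps in the square, together with the two horizontal isomorphisms that were chosen in the text, are morphisms of bigraded vector spaces. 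The chosen isomorphisms $\Gamma(\cS_N,\CF_\nu)\iso H^{G_\cO}_*(\tilde\cR^\nu)$ are unique up to scalar precisely because each bigraded component of the source is at most one-dimensional; so it suffices to check that on each nonzero bigraded line the two composites around the square agree up to a \emph{single} global scalar, independent of the line.

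First I would fix once and for all the chosen isomorphisms $\phi_\nu\colon\Gamma(\cS_N,\CF_\nu)\iso H^{G_\cO}_*(\tilde\cR^\nu)$ and note that, by the previous lemma, both vertical arrows are surjective. Then I would pick a bigraded line $\ell\subset\Gamma(\cS_N,\CF_{\lambda+\mu})$ in the image of the left vertical map; by multiplicity-freeness and the explicit computation in the proof of \ref{tensor surj}, $\ell$ is hit by the product of a specific line in $\Gamma(\cS_N,\CF_\lambda)$ with a specific line in $\Gamma(\cS_N,\CF_\mu)$ (the factorization $m'=m-1, m''=1$ or $m'=m, m''=0$ there). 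The two composites around the square send this rank-one subspace of $\Gamma(\cS_N,\CF_\lambda)\otimes_{\BC[\cS_N]}\Gamma(\cS_N,\CF_\mu)$ into the \emph{same} bigraded line $\phi_{\lambda+\mu}(\ell)$ of $H^{G_\cO}_*(\tilde\cR^{\lambda+\mu})$, hence differ there by a scalar $c_\ell$. What remains is to show $c_\ell$ does not depend on $\ell$.

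The cleanest way to pin down that independence is to use the generic isomorphism statement already established: after inverting a suitable element of $\BC[\cS_N] = H^{G_\cO}_*(\cR)$ — equivalently, after localizing to $\cS_N^\circ$ or to the generic point — the modules $\Gamma(\cS_N,\CF_\nu)$ become free of rank one over the localized ring, the tensor product $\otimes_{\BC[\cS_N]}$ becomes the honest tensor product of rank-one modules, and all four maps in the square become isomorphisms of rank-one modules; the localization theorem identifies the equivariant-homology side with the fixed-point side where the convolution product is manifestly the multiplication of functions. Over this localized (domain) ring the commutativity up to scalar is a single equation between two units, so it holds with one scalar, and this scalar then governs every bigraded line simultaneously. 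Restricting back from the localization to the original bigraded modules (using that the modules are torsion-free, so inject into their localizations) shows the square commutes up to that one scalar.

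The main obstacle I anticipate is not conceptual but bookkeeping: making sure the \emph{same} $\BC^\times\times\BC^\times$-action is used on both sides and that the chosen isomorphisms $\phi_\nu$ are genuinely bihomogeneous, so that ``up to scalar'' really means one scalar rather than one scalar per bigraded piece. This is exactly the subtlety flagged in the \texttt{NB} inside the proof of \ref{tensor surj} — one must know that the image of $\text{nonzero}\otimes\text{nonzero}$ is nonzero, which is why the generic-isomorphism/localization input is doing real work rather than being a formality. Once equivariance and the generic isomorphism are in hand, the rest is the rank-one argument above, and the proof of \ref{tensor surj} supplies the explicit factorizations needed to see that the relevant lines are actually in the image of the vertical maps.
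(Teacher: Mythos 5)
Your proposal is correct and follows essentially the same route as the paper's proof: both reduce to the generic point where all four modules are free of rank one (using that the vertical kernels are exactly the torsion, so the target $H^{G_\cO}_*(\tilde\cR^{\lambda+\mu})$ being torsion-free lets you check after localization), and then pin down the single scalar via the multiplicity-free $\BC^\times\times\BC^\times$-bigrading. Your extra bookkeeping about bigraded lines is a harmless elaboration of the paper's one-line appeal to a single eigensection.
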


\begin{proof}
The kernels of both vertical morphisms coincide with the torsion in the upper
row. Thus it suffices to check the claim generically. But generically all the
four modules in question are free of rank 1. So it suffices to check the
commutativity for a single $\BC^\times\times\BC^\times$-eigensection of 
$\Gamma(\cS_N,\CF_\lambda)\otimes_{\BC[\cS_N]}\Gamma(\cS_N,\CF_\mu)$, and this 
follows from the multiplicity free property of
$H^{G_\cO}_*(\tilde\cR^{\lambda+\mu})$.
\begin{NB}
$i^!_{\lambda+\mu}\tilscA^{\on{for}}$.
\end{NB}%
\end{proof}

\begin{Remark}\label{rem:ambiguity}
  At the end of \ref{Klein via}, we wrote down an explicit isomorphism
  $\Gamma(\cS_N,\cF_\lambda)\xrightarrow{\sim}
  H^{G_\cO}_*(\tilde\cR^\lambda)$ when $\lambda$ is a fundamental
  coweight $\omega_i$ as
  \begin{equation*}
     r^{(m,\omega_i)} \mapsto
     \begin{cases}
       v^{N-i} Y^{m-1} & \text{if $m > 0$},
       \\
       u^i Z^{-m} & \text{if $m\le 0$},
     \end{cases}
   \end{equation*}
   where $r^{(m,\omega_i)}$ (denoted by $r^m$ in \ref{Klein via}) is
   the fundamental cycle of the fiber of $\tilde\cR\to\Gr_{\tilde G}$
   over $(m,\underbrace{1,\dots,1}_{\text{$i$
       times}},\underbrace{0,\dots,0}_{\text{$N-i$ times}})$.
  \begin{NB}
    Comparison of the notation: $y_i^\ialpha$ is $v^{N-\ialpha}$, and
    $\sy_i^\ialpha$ is $r^{(1,\omega_i)}$ for $\iialp=\ialpha$, i.e.\ the
    fundamental class of the fiber over
    $(1,\underbrace{1,\dots,1}_{\text{$\ialpha$ times}},
    \underbrace{0,\dots,0}_{\text{$N-\ialpha$ times}})$.
  \end{NB}%
  Thanks to \ref{tensor Klein} we generalize it for general
  dominant $\lambda$ by products. Then \ref{tensor Klein} holds
  without ambiguity of a scalar under the generalized isomorphism.
  Namely it is characterized by
  $
  \bigotimes_{i=1}^{N-1} (r^{(1,\omega_i)})^{\otimes(\lambda_i - \lambda_{i+1})}
  \mapsto \bigotimes_{i=1}^{N-1}(v^{N-i})^{\otimes(\lambda_i - \lambda_{i+1})}.
  $
  By \ref{sec:abelian} the left hand side is nothing but the
  fundamental class over
  $(\lambda_1-\lambda_N,\lambda_1-\lambda_N,\lambda_2-\lambda_N,\dots,
  \lambda_{N-1}-\lambda_N,0) =
  (\lambda_1,\lambda_1,\lambda_2,\dots,\lambda_{N-1},\lambda_N)$ (the
  first entry corresponds to $\GL(V)$ of $\tilde G$ and others to
  $T(W)$).
\end{Remark}

\begin{Remark}
  We have another way\footnote{H.N.\ thanks Alexei Oblomkov for
    motivating him to considering this approach.} to understand
  $\cM_C^\varkappa(G,\bN)$. We identify
  $\tilde G = \CC^\times\times (\CC^\times)^N/\CC^\times$ with
  $(\CC^\times)^N$ by
  $(r,r_1,\dots,r_N)\bmod \CC^\times\mapsto
  (r_1/r,\dots,r_N/r)$.
  \begin{NB}
    The inverse is $(t_1,\dots,t_N)\to (1,t_1,\dots,t_N)\bmod \CC^\times$.
  \end{NB}%
  The projection $\tilde G\to G_F$ is just the quotient by the
  diagonal subgroup $\CC^\times$. Then $\bN\cong\CC^N$ is just the
  product of $N$ copies of the dual of the standard representation of
  $\CC^\times$, hence the Coulomb branch
  $\cM_C(\tilde G,\bN) \cong \CC^{2N}$. The action of
  $\pi_1(\tilde G)^\wedge$ is the $(\CC^\times)^N$-action on
  $\CC^{2N}$ given by
  $(s_1,\dots, s_N)\cdot (x_1,y_1,\dots,x_N, y_N) = (s_1 x_1, s_1^{-1}
  y_1,\dots, s_N x_N, s_N^{-1} y_N)$. See \ref{sec:abelian}. We note
  that
  $(\CC^\times)^{N-1} \cong \pi_1(G_F)^\wedge \to \pi_1(\tilde
  G)^\wedge \cong (\CC^\times)^N$ is given by
  $(t_1,\dots, t_{N-1})\mapsto (t_1,t_2/t_1,t_3/t_2,\dots,
  t_{N-1}/t_{N-2},1/t_{N-1})$. Hence $\cM_C(G,\bN)$ is the hamiltonian
  reduction of $\CC^{2N}$ by the action
  $(t_1 x_1, t_1^{-1} y_1, t_2/t_1 x_2, t_1/t_2 y_2, \dots,
  t_{N-1}^{-1} x_N, t_{N-1} y_N)$.
  This is nothing but a quiver variety of type $A_{N-1}$ with
  dimension vectors $\bv = (1,\dots,1)$, $\bw = (1,0,\dots,0,1)$,
  which is known to be $\cS_N$. It is also known that the GIT quotient
  gives a minimal resolution of $\cS_N$ such that the tautological
  line bundle for the $i$-th $\CC^\times$ is identified with
  $\cL_{\omega_i}$.
\end{Remark}

\section{Determinant line bundle on the Hilbert scheme}
\label{hilbert}
In this section we identify the determinant line bundle on the Hilbert
scheme $\on{Hilb}^n(\BA^2)$, or rather global sections of its
pushforward to $\on{Sym}^n\BA^2$, with the module over the Coulomb
branch of the Jordan quiver gauge theory arising from the construction
of \ref{subsec:flav-symm-group2}. (See also 
\ref{sec:sheav-affine-grassm}, though it is not essentially used.)

\subsection{Degree $2$}
\label{deg 2}
We consider the case of the Hilbert scheme $\on{Hilb}^2(\BA^2)$
of two points in this subsection. 
We have the dilatation action of $\BC^\times$ on
$\BA^2\colon t(u,v)=(t^{-1}u,t^{-1}v)$. It induces a $\BC^\times$-action on
$\on{Hilb}^2(\BA^2)$. The determinant line bundle $\CL$ on
$\on{Hilb}^2(\BA^2)$ carries a natural $\BC^\times$-equivariant structure.
We have $\on{Hilb}^2(\BA^2)\simeq\wit\cS_2\times\BA^2$, and
$\CL\simeq\CO_{\wit\cS_2}(1)\boxtimes\CO_{\BA^2}$. Hence,
from~\ref{monopole Klein}, for $l\in\BN$, the character of
$\Gamma(\on{Hilb}^2(\BA^2),\CL^l)$ equals
$$(1-t^2)^{-1}(1-t)^{-2}\sum_{m\in\BZ}t^{|l-m|+|m|}.$$

On the other hand, we consider
$G=\GL(V)=\GL(2),\ G_F=\BC^\times,\ \tilde{G}=G\times G_F$.
The $G=\GL(V)$-module $\bN=V\oplus\gl(V)$ carries a commuting dilatation
$G_F$-action; these two actions together give rise to the action of
$\tilde{G}$ on $\bN$. According to~\ref{prop:ad_taut}, the Coulomb branch
$\CM_C(G,\bN)$ is identified with $\on{Sym}^2(\BA^2)$.
Recall the setup of \ref{subsec:flav-symm-group2}.
(See also \ref{subsec:affG_flavor} and~\ref{subsec:line-bundles-via}.)
We consider the variety of triple $\tilde\cR$ for the larger group
$\tilde{G}$ and $\bN$, regarded as a representation of $\tilde G$. Let
$\tilde\pi\colon\tilde\cR\to\Gr_{G_F}$ be the projection. The affine
Grassmannian $\Gr_{G_F}$ is identified with $\BZ$. We denote the fiber
over $l$ by $\tilde\cR^l$. The fiber
$\tilde\cR^{0}$ over $0$ is nothing but the original variety of
triple $\cR$ whose equivariant Borel-Moore homology $H^{G_\cO}_*(\cR)$
is the coordinate ring of the Coulomb branch, i.e.\
$\BC[\on{Sym}^2(\BA^2)]=\BC[\on{Hilb}^2(\BA^2)]$ in this case. For
$l\in\BN\subset\BZ=\Gr_{G_F}$, the homology $H^{G_\cO}_*(\tilde\cR^l)$
is a module over $H^{G_\cO}_*(\cR)$, see
\secref{subsec:flav-symm-group2}.
\begin{NB}
We have the complex $\tilscA$ on
$\Gr_{G_F}$. The affine Grassmannian $\Gr_{G_F}$ is identified with $\BZ$.
The algebra $i^!_0\tilscA^{\on{for}}$ is nothing but the Coulomb branch
$H_*^{G_\CO}(\CR_{G,\bN})\simeq\BC[\on{Sym}^2(\BA^2)]=\BC[\on{Hilb}^2(\BA^2)]$.
For $l\in\BN\subset\BZ=\Gr_{G_F}$, the costalk $i^!_l\tilscA^{\on{for}}$ forms
a module over the algebra $i^!_0\tilscA^{\on{for}}$, see~\eqref{eq:47}.
\end{NB}%
We will denote the coherent sheaf on $\on{Sym}^2(\BA^2)$ associated to this
module by $\CG_l$.

We want to identify this module with
$\Gamma(\on{Hilb}^2(\BA^2),\CL^l)$. The module
$H_*^{G_\CO}(\tilde\cR^l)$ is nothing but the costalk
$i^!_l\tilscA^{\on{for}}$ in the setup in \ref{subsec:affG_flavor}.
\begin{NB}
The costalk $i^!_l\tilscA^{\on{for}}$ is nothing but
$H_*^{G_\CO}(\tilde\cR^l)$ where
$\tilde\pi\colon \CR_{\tilde G,\bN}\to\Gr_{G_F}$, see~\eqref{eq:52}.
\end{NB}
By the monopole formula~\eqref{eq:modified} for the character of
$H_*^{G_\CO}(\tilde\cR^l)$, we have
$$P_t^{\on{mod}}=(1-t^2)^{-2}\sum_{\lambda_1>\lambda_2\in\BZ}t^{-2|\lambda_1-\lambda_2|+
|\lambda_1-\lambda_2+l|+|\lambda_2-\lambda_1+l|+2l+|\lambda_1+l|+|\lambda_2+l|}+
(1-t^2)^{-1}(1-t^4)^{-1}\sum_{\lambda\in\BZ}t^{4l+2|\lambda+l|}.$$

\begin{Lemma}
\label{monopole Hilbert}
$P_t^{\on{mod}}=t^{2l}(1-t^2)^{-1}(1-t)^{-2}\sum_{m\in\BZ}t^{|l-m|+|m|}.$
\end{Lemma}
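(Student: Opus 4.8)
The plan is to show directly that the two explicit power series agree by splitting the index set of the left-hand side $P_t^{\mathrm{mod}}$ into chambers governed by the signs of the linear forms appearing in the exponents, summing the resulting geometric series in each chamber, and matching the total with the right-hand side. First I would isolate the overall prefactor $t^{2l}$: note that every exponent in $P_t^{\mathrm{mod}}$ contains an additive $2l$ (from $2l$ in the first sum and $4l = 2l + 2l$ in the second, the remaining $2l$ being absorbed by shifting $\lambda\mapsto\lambda - l$), so after the substitution $\lambda_i\mapsto\lambda_i - l$ in the first sum and $\lambda\mapsto\lambda-l$ in the second, it remains to prove
\[
(1-t^2)^{-2}\!\!\sum_{\lambda_1>\lambda_2}\!\! t^{-2|\lambda_1-\lambda_2| + |\lambda_1-\lambda_2| + |\lambda_2-\lambda_1| + |\lambda_1| + |\lambda_2|} + (1-t^2)^{-1}(1-t^4)^{-1}\!\!\sum_{\lambda}\!\! t^{2|\lambda|} = (1-t^2)^{-1}(1-t)^{-2}\!\!\sum_{m\in\BZ}\!\! t^{|l-m|+|m|}.
\]
Wait — the right side still carries an $l$, so the correct bookkeeping is slightly different: the shift on the left must be chosen so that the residual $l$-dependence reproduces $\sum_m t^{|l-m|+|m|}$. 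Concretely I would set $\lambda_1 - \lambda_2 = d > 0$ and keep $\lambda_2 + l =: m'$ as the second summation variable in the first sum; then $-2|\lambda_1-\lambda_2| + |\lambda_1-\lambda_2+l| + |\lambda_2-\lambda_1+l| = -2d + |d + (m'-\lambda_2)| + \cdots$ — the cleanest route is to not pre-shift at all but to carry out the chamber decomposition on the original exponents.

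The key computational step is therefore: for the first sum, write $a = \lambda_1 - \lambda_2 > 0$ and $b = \lambda_2 + l$, so $\lambda_1 + l = a + b$, and the exponent becomes $-2a + |a - \text{something}|\cdots$; more precisely $|\lambda_1 - \lambda_2 + l| + |\lambda_2 - \lambda_1 + l| = |a' |$-type terms where I substitute $c := l - a$ so this pair equals $|l-a| + |l+a|$... one then observes $-2|\lambda_1 - \lambda_2| + |\lambda_1 - \lambda_2 + l| + |\lambda_2 - \lambda_1 + l| + 2l = -2a + |l-a| + |l+a| + 2l$, which equals $0$ if $a \le l$ and $2(l - a) + \cdots$ — in any case it is a function of $a$ and $l$ alone, independent of $b$, so the $b = \lambda_2$-summation factors out as $\sum_{\lambda_2} t^{|\lambda_1 + l| + |\lambda_2 + l|}$ over the region $\lambda_1 > \lambda_2$, which after the substitution is a standard double geometric sum producing a factor $(1-t)^{-1}$ against the existing $(1-t^2)^{-2}$. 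Summing over $a \ge 1$ in its two chambers $1 \le a \le l$ and $a > l$ gives a rational function; adding the diagonal contribution $(1-t^2)^{-1}(1-t^4)^{-1}\sum_\lambda t^{4l + 2|\lambda+l|} = (1-t^2)^{-1}(1-t^4)^{-1} t^{2l}(1-t^2)^{-1}\cdot(\text{finite correction})$ fills in exactly the $\lambda_1 = \lambda_2$ locus that was excluded. Finally I would expand the target $t^{2l}(1-t^2)^{-1}(1-t)^{-2}\sum_m t^{|l-m|+|m|}$ the same way — splitting $\sum_m t^{|l-m|+|m|} = (l+1)t^l + 2t^l\cdot t^2/(1-t^2) \cdot(\cdots)$, i.e. $= t^l\big((l+1) + \tfrac{2t}{1-t}\big)$ after the elementary computation $\sum_{m\in\BZ} t^{|l-m|+|m|} = \sum_{m=0}^{l} t^l + 2\sum_{m>0} t^{l+2m} = (l+1)t^l + \tfrac{2t^{l+2}}{1-t^2}$ — and check the two rational functions coincide.

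The main obstacle is purely organizational rather than deep: keeping the chamber bookkeeping consistent across the three $|\cdot|$ pairs in the first sum (six sign-regions in the $(\lambda_1,\lambda_2)$-plane, collapsing to the two effective chambers $a\le l$ and $a>l$ once one exploits that the exponent depends on $\lambda_1,\lambda_2$ only through $a = \lambda_1-\lambda_2$ and $\lambda_1+l,\lambda_2+l$), and making sure the $\lambda_1=\lambda_2$ diagonal term from the second summand is added with the correct multiplicity so that the union of "$\lambda_1 > \lambda_2$" and "$\lambda_1 = \lambda_2$" reconstitutes the full symmetric sum that the Hilbert-scheme side secretly computes. Once the three pieces are summed as geometric series and put over a common denominator $(1-t^2)(1-t)^2$ (equivalently $(1-t^2)^2(1-t)^{-1}$ — matching prefactors is itself a small check), the equality of numerators is a finite polynomial identity in $t$ with coefficients linear in $l$, which can be verified in the two cases $a \le l$, $a > l$ by inspection. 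I expect no genuine difficulty beyond careful arithmetic, so the write-up is essentially a guided computation.
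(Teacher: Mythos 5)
Your overall strategy is the same as the paper's: split each lattice sum into the chambers on which the absolute values are linear, evaluate the resulting (arithmetico-)geometric series, and compare rational functions. Your evaluation of the right-hand side, $\sum_{m\in\BZ}t^{|l-m|+|m|}=(l+1)t^l+\tfrac{2t^{l+2}}{1-t^2}$, agrees with the paper's three-chamber split ($m\le 0$, $0<m\le l$, $m>l$), and your reorganization of the first sum by $a=\lambda_1-\lambda_2$ (two chambers $a\le l$, $a>l$, times the three sign-chambers of the inner $\lambda_2$-sum) is an equivalent bookkeeping to the paper's six regions. So the route is right in outline.

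However, several of the concrete steps as written are incorrect and would derail the computation if followed literally. (i) The $a$-dependent part of the exponent is $-2a+|l-a|+|l+a|+2l=4l-2a$ for $0<a\le l$ and $=2l$ for $a\ge l$; it is never $0$, contrary to what you assert. (ii) The inner sum over $\lambda_2$ at fixed $a$ is $\sum_{\lambda_2}t^{|\lambda_2+a+l|+|\lambda_2+l|}=(a+1)t^{a}+\tfrac{2t^{a+2}}{1-t^2}$, \emph{not} a geometric "factor $(1-t)^{-1}$": the term $(a+1)t^{a}$, and the ensuing sums $\sum_a(a+1)t^{(\cdots)}$, are exactly what produce the $(1-t)^{-2}$ in the target, so replacing the inner sum by a geometric factor gives the wrong rational function. (iii) The second summand of $P_t^{\on{mod}}$ is not "the $\lambda_1=\lambda_2$ diagonal added with the correct multiplicity": although its exponent is the diagonal specialization of the first summand's, it carries the different prefactor $(1-t^2)^{-1}(1-t^4)^{-1}$ (the stabilizer factor in the monopole formula), and it must simply be evaluated on its own, as the paper does, giving $t^{4l}(1+t^2)/(1-t^2)$ before the prefactor. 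Finally, your parenthetical "$=t^l\bigl((l+1)+\tfrac{2t}{1-t}\bigr)$" contradicts the correct closed form you state in the same sentence. With these points corrected the computation does close and reproduces the paper's proof, but as written the proposal is not yet a valid argument.
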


\begin{proof}
The sum in the RHS splits into 3 summands according to $m\leq0,\ 0<m\leq l,\ m>l$,
equal respectively, to $\frac{t^l}{1-t^2},\ lt^l,\ \frac{t^{l+2}}{1-t^2}$.
The second sum in the LHS splits into 2 summands according to
$\lambda\leq-l,\ \lambda>-l$, equal respectively, to
$\frac{t^{4l}}{1-t^2},\ \frac{t^{4l+2}}{1-t^2}$.
The first sum in the LHS splits into 6 summands according to
$-l\geq\lambda_1>\lambda_2,\ \lambda_1-\lambda_2\geq l$, or
$-l\geq\lambda_1>\lambda_2,\ \lambda_1-\lambda_2<l$, or
$\lambda_1>\lambda_2\geq-l,\ \lambda_1-\lambda_2\geq l$, or
$\lambda_1>\lambda_2\geq-l,\ \lambda_1-\lambda_2<l$, or
$\lambda_1>-l>\lambda_2,\ \lambda_1-\lambda_2\geq l$, or
$\lambda_1>-l>\lambda_2,\ \lambda_1-\lambda_2<l$.
These summands are equal respectively, to
$\frac{t^{3l}}{(1-t^2)(1-t)},\ \frac{t^{3l+1}(1-t^{l-1})}{(1-t^2)(1-t)},\
\frac{t^{3l}}{(1-t^2)(1-t)},\ \frac{t^{3l+1}(1-t^{l-1})}{(1-t^2)(1-t)},\
\frac{t^{3l}}{(1-t)^2}+\frac{(l-2)t^{3l}}{1-t},\
\frac{(l-2)t^{3l+1}}{1-t}-\frac{t^{3l+2}(1-t^{l-1})}{(1-t)^2}$.
Now a straightforward calculation finishes the proof.
\end{proof}

The evident action of $\BG_a^2$ on $\BA^2$ induces the natural free action of
$\BG_a^2$ on $\on{Sym}^2\BA^2$ such that $\BG_a^2\backslash\on{Sym}^2\BA^2=\cS_2$.
Moreover, we have a projection $\on{add}\colon \on{Sym}\BA^2\to\BA^2,\
((u_1,v_1),(u_2,v_2))\mapsto(u_1+u_2,v_1+v_2)$;
altogether we obtain an isomorphism $\on{Sym}^2\BA^2\iso\cS_2\times\BA^2$.

\begin{Proposition}
\label{costalk Hilbert}
Under the identification
$
\begin{NB}
  i^!_0\tilscA^{\on{for}}=
\end{NB}%
  H_*^{G_\CO}(\cR)\simeq\BC[\on{Sym}^2\BA^2]$, the
  \begin{NB}
    $i^!_0\tilscA^{\on{for}}$-module
  \end{NB}
  $H_*^{G_\CO}(\cR)$-module
$
\begin{NB}
i^!_l\tilscA^{\on{for}}=  
\end{NB}
H_*^{G_\CO}(\tilde\cR^l)$ is isomorphic
to the $\BC[\on{Sym}^2\BA^2]$-module $\Gamma(\on{Hilb}^2(\BA^2),\CL^l)$.
More precisely,

\textup{(a)} The restriction $\CG_l^\circ$ of $\CG_l$ to
$\cS_2^\circ\times\BA^2\subset\cS_2\times\BA^2=\on{Sym}^2\BA^2$
is a line bundle isomorphic to $\CL^l|_{\cS_2^\circ\times\BA^2}$.

\textup{(b)} An isomorphism in \textup{(a)} is
defined uniquely up to multiplication by a scalar.

\textup{(c)} An isomorphism in \textup{(a)} extends to an isomorphism
$
H^{G_\cO}_*(\tilde\cR^l)
\begin{NB}
= i^!_l\tilscA^{\on{for}}  
\end{NB}%
\iso\Gamma(\on{Hilb}^2(\BA^2),\CL^l)$.
\end{Proposition}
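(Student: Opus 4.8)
The plan is to prove (a), (b), (c) in order, using \ref{monopole Hilbert} together with the $A_1$-case results recalled in \ref{sec:mult} (in particular \ref{tensor Klein} and \ref{rem:ambiguity}) and the geometry of the splitting $\on{Hilb}^2(\BA^2)\simeq\wit\cS_2\times\BA^2$.

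For part (a), first I would observe that the free $\BG_a^2$-action on $\on{Sym}^2\BA^2$ (translations) lifts to an action on everything in sight: it lifts to $\on{Hilb}^2(\BA^2)$ (it is the translation action on $\BA^2$ acting on subschemes), it preserves $\CL$ up to equivariant structure, and on the Coulomb side it comes from the residual flavor/translation symmetry, so $\CG_l$ is $\BG_a^2$-equivariant as well. Restricting to the open locus where the two points are distinct, i.e.\ over $\cS_2^\circ\times\BA^2$, both $\CG_l^\circ$ and $\CL^l|_{\cS_2^\circ\times\BA^2}$ descend, by $\BG_a^2$-equivariant descent, to sheaves on $\cS_2^\circ\times\mathrm{pt}=\cS_2^\circ$. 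On $\cS_2^\circ=\wit\cS_2\setminus\{0\}$ the bundle $\CL|_{\cS_2^\circ}$ is $\cO_{\wit\cS_2}(1)|_{\cS_2^\circ}=\cL_{\omega_1}|_{\cS_2^\circ}$, which is exactly the $A_1$ line bundle from \ref{monopole}. On the Coulomb side, the descended sheaf is (up to the $\BA^2$-factor) precisely the $A_1$-sheaf $H^{G_\cO}_*(\tilde\cR^l)$ for the quiver $1-1$ gauge theory handled in \ref{line Klein}$\sim$\ref{Klein via}; restricted to $\cS_2^\circ$ that sheaf is a line bundle (it is the $\bpi_*$ of a line bundle on the resolution, which is an isomorphism over the smooth open locus). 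Then $\CG_l^\circ$ and $\CL^l|_{\cS_2^\circ\times\BA^2}$ are two line bundles on $\cS_2^\circ\times\BA^2$; since $\cS_2^\circ$ is the smooth locus of the $A_1$ surface, $\Pic(\cS_2^\circ\times\BA^2)\cong\Pic(\cS_2^\circ)\cong\BZ$ (generated by $\cL_{\omega_1}$, with $\cS_2^\circ$ having the homotopy type of $\BR\BP^3\simeq S^3/(\ZZ/2)$), so to conclude they are isomorphic it is enough to match the class, i.e.\ the integer $l$. That matching is forced by the character computation: by \ref{monopole Hilbert} the bigraded character of $H^{G_\cO}_*(\tilde\cR^l)$ agrees, up to the overall factor $t^{2l}$ coming from the dilatation weight of the $\BA^2$-factor, with $\Gamma(\on{Hilb}^2(\BA^2),\CL^l)$, hence the two line bundles have the same degree along the exceptional $\BP^1$ of $\wit\cS_2$. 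Alternatively, and more cleanly, one invokes \ref{tensor surj} and \ref{tensor Klein} in the $A_1$-setting: these give a surjective, associative multiplication $\bigoplus_l H^{G_\cO}_*(\tilde\cR^l)$ compatible with tensor powers of a single line bundle, which pins down $\CG_l^\circ\cong(\CG_1^\circ)^{\otimes l}$ and $\CG_1^\circ\cong\cL_{\omega_1}|_{\cS_2^\circ}\boxtimes\cO_{\BA^2}\cong\CL|_{\cS_2^\circ\times\BA^2}$ by \ref{rem:ambiguity}.

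For part (b), uniqueness up to a scalar is automatic once (a) is known: any two isomorphisms of the line bundle $\CG_l^\circ$ with $\CL^l|_{\cS_2^\circ\times\BA^2}$ differ by an automorphism of a line bundle on $\cS_2^\circ\times\BA^2$, i.e.\ by an element of $\Gamma(\cS_2^\circ\times\BA^2,\cO^\times)=\CC^\times$ (using that $\cS_2^\circ\times\BA^2$ is a normal variety whose ring of global functions is $\BC[\cS_2^\circ]\otimes\BC[\BA^2]$ and $\BC[\cS_2^\circ]=\BC[\cS_2]$ by normality and $\codim\{0\}\ge 2$, so its units are just scalars).

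For part (c), I would argue that the isomorphism extends over the whole of $\on{Sym}^2\BA^2=\cS_2\times\BA^2$ by a standard torsion/depth argument. Both $H^{G_\cO}_*(\tilde\cR^l)$ and $\Gamma(\on{Hilb}^2(\BA^2),\CL^l)$ are torsion-free (indeed maximal Cohen–Macaulay, being pushforwards of line bundles from a resolution of a surface, times a smooth factor) modules over $\BC[\on{Sym}^2\BA^2]=\BC[\cS_2\times\BA^2]$, and $\on{Sym}^2\BA^2\setminus(\cS_2^\circ\times\BA^2)$ has codimension $2$ (its complement is the image of the big diagonal, which is $\{0\}\times\BA^2$, codimension $2$). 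A reflexive — in fact $S_2$ — module is determined by its restriction to the complement of a codimension-$2$ closed subset, via $M=j_*j^*M$ for $j$ the open inclusion. Hence the isomorphism $\CG_l^\circ\iso\CL^l|_{\cS_2^\circ\times\BA^2}$ of (a) pushes forward to an isomorphism $\CG_l=j_*\CG_l^\circ\iso j_*(\CL^l|_{\cS_2^\circ\times\BA^2})=\bpi_*\CL^l$ on all of $\on{Sym}^2\BA^2$, i.e.\ an isomorphism $H^{G_\cO}_*(\tilde\cR^l)\iso\Gamma(\on{Hilb}^2(\BA^2),\CL^l)$ of $\BC[\on{Sym}^2\BA^2]$-modules; here I use that $\bpi\colon\wit\cS_2\times\BA^2\to\cS_2\times\BA^2$ is an isomorphism over $\cS_2^\circ\times\BA^2$ so that $\Gamma(\on{Hilb}^2,\CL^l)=\Gamma(\cS_2\times\BA^2,\bpi_*\CL^l)$ is indeed the $j_*$ of its restriction.

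The main obstacle I expect is part (a): specifically, establishing the $\BG_a^2$-equivariance of $\CG_l$ and carrying out the descent to $\cS_2^\circ$ in a way that genuinely reduces the computation to the already-settled $A_1$ case, rather than merely matching characters. The character identity of \ref{monopole Hilbert} tells us the two modules are \emph{abstractly} isomorphic as bigraded vector spaces (and over the generic point, rank one), but upgrading this to an isomorphism of sheaves compatible with the module structure requires knowing that the multiplication map $H^{G_\cO}_*(\cR)\otimes H^{G_\cO}_*(\tilde\cR^l)\to H^{G_\cO}_*(\tilde\cR^l)$ matches the $\BC[\on{Sym}^2\BA^2]$-module structure on $\Gamma(\on{Hilb}^2,\CL^l)$ — and that is exactly what the translation-equivariant descent plus \ref{tensor Klein} delivers, by reducing to the $\GL(1)$, $\bN=\Hom(W,V)$ theory with $\dim W=2$ whose line bundles were pinned down in \ref{Klein via}.
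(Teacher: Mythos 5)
Your step (c) contains a genuine gap, and it sits exactly where the real difficulty of this proposition lies. You assert that both $\Gamma(\on{Hilb}^2(\BA^2),\CL^l)=\Gamma(\cS_2\times\BA^2,\bpi_*\CL^l)$ and $H^{G_\cO}_*(\tilde\cR^l)$ satisfy $M=j_*j^*M$ for the open inclusion $j$ of $\cS_2^\circ\times\BA^2$, ``being pushforwards of line bundles from a resolution.'' But the pushforward of a line bundle along a resolution of a surface singularity is torsion-free, not reflexive: the exceptional locus of $\bpi$ is a \emph{divisor}, so sections over $\bpi^{-1}(\cS_2^\circ\times\BA^2)$ may acquire poles along it and fail to extend. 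Concretely, for $l\geq 2$ one has
$\Gamma(\wit\cS_2,\CL_{(l,0)})=\Gamma(T^*\BP^1,\CO(l))=\bigoplus_{k\geq0}\Gamma(\BP^1,\CO(l+2k))\subsetneq\Gamma(\cS_2^\circ,\CF_{\bar l})$,
where the right-hand side is the reflexive rank-one module attached to $\bar l=l\bmod 2$ in $\on{Pic}(\cS_2^\circ)=\BZ/2\BZ$ (not $\BZ$, as you wrote --- this also undercuts your first argument for (a), though your alternative route via \ref{tensor surj}, \ref{tensor Klein} and \ref{rem:ambiguity} is fine); the inclusion misses sections in low $t$-degree. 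So $j_*$ of the isomorphism of (a) only identifies the two reflexive hulls, and one must still show that the two proper submodules $\CF_l$ and $\bpi_*\CL^l$ of $j_*\CF_{\bar l}$ coincide. This is precisely where the paper uses \ref{monopole Hilbert}: the submodule $\Gamma(\wit\cS_2,\CL_{(l,0)})\subset\Gamma(\cS_2^\circ,\CF_{\bar l})$ is characterized by its $t$-character, and the monopole formula shows that $\Gamma(\cS_2,\CF_l)$ has that same character ($l=1$ being the one case where $\CF_{(1,0)}=j_*\CF_{\bar1}$ and no further argument is needed). Your proof never invokes \ref{monopole Hilbert} in step (c), and without it the step fails.

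Parts (a) and (b) are otherwise essentially the paper's argument: the translation $\BG_a^2$-action on $\CG_l$ is produced by integrating the commuting, locally nilpotent Hamiltonian vector fields of $E_1[1]=u_1+u_2$ and $F_1[1]=v_1+v_2$ (local nilpotence coming from nonnegativity of degrees in the monopole formula), and then one descends to the $A_1$ situation of \ref{line Klein}$\sim$\ref{Klein via}; your observation that (b) reduces to $\Gamma(\cS_2^\circ\times\BA^2,\CO^\times)=\CC^\times$ is correct. But your closing assessment is inverted: the equivariant descent in (a) is the routine part, while the identification of the non-reflexive submodule in (c) is the crux.
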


\begin{proof}
We consider the elements $E_1[1]$ and $F_1[1]$ of~\eqref{eq:79}
in $H_*^{G_\CO}(\cR)\simeq\BC[\on{Sym}^2\BA^2]$. They have degree $1/2$
with respect to the modified grading as in~\ref{discrepancy}(2),
see~\eqref{eq:Deltadeg}. Clearly, $E_1[1]=u_1+u_2,\ F_1[1]=v_1+v_2$. The
corresponding hamiltonian vector fields $H_{E_1[1]}$ and $H_{F_1[1]}$ on
$\CM_C=\on{Sym}^2\BA^2$ commute since the Poisson bracket $\{E_1[1],F_1[1]\}$
acts as multiplication by $2$ (the number of points), and its hamiltonian
vector field vanishes. The degrees of both $H_{E_1[1]}$ and $H_{F_1[1]}$ are $-1/2$
since the degree of the Poisson bracket is $-1$. Since the degrees of
$H^{G_\cO}_*(\cR)
\begin{NB}
  =
  i^!_0\tilscA^{\on{for}}
\end{NB}%
$ and $H^{G_\cO}_*(\tilde\cR^l)
\begin{NB}
=i^!_l\tilscA^{\on{for}}  
\end{NB}%
$ are all nonnegative by the
monopole formula, both $H_{E_1[1]}$ and $H_{F_1[1]}$ are locally nilpotent. Hence
they integrate to the action of $\BG_a^2$ on
$H^{G_\cO}_*(\cR)
\begin{NB}
  =
  i^!_0\tilscA^{\on{for}}
\end{NB}%
$ and $H^{G_\cO}_*(\tilde\cR^l)
\begin{NB}
=i^!_l\tilscA^{\on{for}}  
\end{NB}%
$. The action of $\BG_a^2$ on
$H^{G_\cO}_*(\cR)
\begin{NB}
=
i^!_0\tilscA^{\on{for}}  
\end{NB}%
=\BC[\on{Sym}^2\BA^2]$ comes from the action on
$\on{Sym}^2\BA^2$ discussed before the proposition. We conclude that the
coherent sheaf $\CG_l$ on $\on{Sym}^2\BA^2$ is
$\BG_m\ltimes\BG_a^2$-equivariant (the action of
$\BG_m$ comes from the modified grading).

In particular, $\CG_l$ is a pullback of a $\BG_m$-equivariant sheaf
$\CF_l$ on $\BG_a^2\backslash\on{Sym}^2\BA^2=\cS_2$. Both $\CG_l$ and
$\CF_l$ are generically of rank $1$; hence both $\CF_l|_{\cS_2^\circ}$
and $\CG_l^\circ:=\CG_l|_{\cS_2^\circ\times\BA^2}$ are line bundles. Recall that
$\on{Pic}(\cS_2^\circ)=\BZ/2\BZ$; the trivial line bundle is denoted
$\CF_{\bar0}$, and the nontrivial one is denoted $\CF_{\bar1}$ in accordance with
notations of~\ref{push Klein}. \ref{monopole Hilbert} and the argument in the
proof of~\ref{charac} show that $\CF_l|_{\cS_2^\circ}\simeq\CF_{\bar{l}}$, where
$\bar{l}=l\pmod{2}$. This proves (a), and the same argument as in the
proof of~\ref{charac} establishes (b).

For (c), we have to identify $\CF_l\subset j_*\CF_{\bar{l}}$
and $\CF_\lambda\subset j_*\CF_{\bar{l}}$ in notations
of~\ref{push Klein}, where $\lambda=(l,0)$. We start with $l=1$ case.
Then $\CF_\lambda=j_*\CF_{\bar{1}}$, and the character of
(the global sections of) $\CF_1$ coincides with the character of
$j_*\CF_{\bar{1}}$. Hence $\CF_1=j_*\CF_{\bar{1}}=\CF_\lambda$.

For $l>1$ we have to identify $\Gamma(\cS_2,\CF_l)$ inside
$\Gamma(\cS_2^\circ,\CF_{l\pmod{2}})$ with $\Gamma(\wit\cS_2,\CL_\lambda)=
\Gamma(T^*\BP^1,\CO(l))=\bigoplus_{k\geq0}\Gamma(\BP^1,\CO(l+2k))$.
However, the latter submodule is clearly characterized by its $t$-character
which coincides with the $t$-character of $\Gamma(\cS_2,\CF_l)$
by~\ref{monopole Hilbert}. Hence
$\Gamma(\on{Hilb}^2(\BA^2),\CL^l)=H_*^{G_\CO}(\tilde\CR{}^l)$.
\begin{NB}
  I cannot follow the last part of the argument. If we ignore the
  $x$-character, the module is no longer multiplicity free.
\end{NB}%
\end{proof}

\begin{NB}For $l>1$, the multiplication morphism
$\Gamma(\on{Hilb}^2(\BA^2),\CL^1)^{\otimes l}\to\Gamma(\on{Hilb}^2(\BA^2),\CL^l)$
is surjective. It suffices to check for $\CL=\CO(1)$ on $T^*\BP^1$ in place of
$\on{Hilb}^2(\BA^2)$. Then
$\Gamma(T^*\BP^1,\CO(l))=\bigoplus_{k\geq0}\Gamma(\BP^1,\CO(l+2k))$.
Clearly, the multiplication morphism
$$\left(\bigoplus_{k\geq0}\Gamma(\BP^1,\CO(l+2k))\right)\otimes
\left(\bigoplus_{j\geq0}\Gamma(\BP^1,\CO(1+2j))\right)\to
\bigoplus_{i\geq0}\Gamma(\BP^1,\CO(l+1+2i))$$ is surjective, and we are done
by induction.

This multiplication is compatible (up to a multiplicative constant) with
$(i^!_1\tilscA^{\on{for}})^{\otimes l}\to i^!_l\tilscA^{\on{for}}$.
\begin{NB2}
  Why ? HN
\end{NB2}%
In effect, $\Gamma(\on{Hilb}^2(\BA^2),\CL^1)\otimes_{\BC[\on{Sym}^2\BA^2]}\otimes
\ldots\otimes_{\BC[\on{Sym}^2\BA^2]}\Gamma(\on{Hilb}^2(\BA^2),\CL^1)=
\Gamma(\on{Hilb}^2(\BA^2),\CL^l)$, and hence
$i^!_l\tilscA^{\on{for}}\otimes_{\BC[\on{Sym}^2\BA^2]}\otimes
\ldots\otimes_{\BC[\on{Sym}^2\BA^2]}i^!_l\tilscA^{\on{for}}=
\Gamma(\on{Hilb}^2(\BA^2),\CL^l)$, but the automorphisms of the submodule
$\Gamma(\on{Hilb}^2(\BA^2),\CL^l)\subset\Gamma(\cS_2^\circ\times\BA^2,\CL^l)$
reduce to scalars.
\begin{NB2}
It follows that
$\Gamma(\on{Hilb}^2(\BA^2),\CL^l)\subset i^!_l\tilscA^{\on{for}}$
(inside $\Gamma(\cS_2,j_*\CF_{\bar{l}})\otimes\BC[\BA^2]$).  Now the
equality of characters implies
$\Gamma(\on{Hilb}^2(\BA^2),\CL^l)= i^!_l\tilscA^{\on{for}}$.
\end{NB2}%
\end{NB}%

Recall that the $\tilde{G}$-module $\bN=V\oplus\gl(V)$ splits as a direct sum.
If we set $'\bN=\gl(V)$, then from~\ref{rem:further} we obtain a homomorphism
$
\begin{NB}
i_0^!\tilscA^{\on{for}}=  
\end{NB}%
H_*^{G_\CO}(\cR)\hookrightarrow
H_*^{G_\CO}({}'\cR)
\begin{NB}
=i_0^!\,{}'\!\!\tilscA^{\on{for}}  
\end{NB}%
$ of algebras and a
compatible homomorphism of modules
$
\begin{NB}
i_l^!\tilscA^{\on{for}}=  
\end{NB}%
H_*^{G_\CO}(\tilde\cR^{l})\hookrightarrow
H_*^{G_\CO}({}'\tilde\cR^{l})
\begin{NB}
=i_l^!\,{}'\!\!\tilscA^{\on{for}}  
\end{NB}
$ (where ${}'\cR$, ${}'\tilde\cR$ are varieties of triples for $(G,{}'\bN)$,
$(\tilde G,{}'\bN)$ respectively, ${}'\tilde\cR^l$ is the fiber
of the projection ${}'\tilde\cR\to\Gr_{G_F}$ over $l$).
According to~\ref{prop:ad_taut}, the Coulomb branch $\CM_C(G,{}'\bN)$
is identified with $\on{Sym}^2(\cS_0)$, and the homomorphism
$H_*^{G_\CO}(\cR)\hookrightarrow
H_*^{G_\CO}({}'\cR)$
\begin{NB}
$i_0^!\tilscA^{\on{for}}\hookrightarrow i_0^!\,{}'\!\!\tilscA^{\on{for}}$
\end{NB}%
corresponds to the morphism
$\jmath^2\colon \on{Sym}^2(\cS_0)\hookrightarrow\on{Sym}^2(\BA^2)$ arising from
the open embedding
$\jmath\colon \cS_0\hookrightarrow\BA^2,\ (u,v)\mapsto(u,u^{-1}v),\ u\ne0$.
We denote by $'\!\CG_l$ the coherent sheaf on $\on{Sym}^2\cS_0$ associated to
the $H^{G_\cO}_*({}'\cR)
\begin{NB}
i_0^!\,{}'\!\!\tilscA^{\on{for}}  
\end{NB}%
$-module
$H^{G_\cO}_*({}'\tilde\cR^l))
\begin{NB}
i_l^!\,{}'\!\!\tilscA^{\on{for}}  
\end{NB}%
$. We would like to identify the coherent sheaves
$\on{pr}_*\CL^l$ and $'\!\CG_l$ on $\on{Sym}^2(\cS_0)$, where
$\on{pr}\colon \on{Hilb}^2(\cS_0)\to\on{Sym}^2(\cS_0)$ is the Hilbert-Chow
morphism. The localization of the morphism
$H^{G_\cO}_*(\tilde\cR^l)\hookrightarrow 
H^{G_\cO}_*({}'\tilde\cR^l)$
\begin{NB}
$i_l^!\tilscA^{\on{for}}\hookrightarrow i_l^!\,{}'\!\!\tilscA^{\on{for}}$
\end{NB}
factors through $\CG_l\hookrightarrow\jmath^2_*\jmath^{2*}\CG_l=
\jmath^2_*\on{pr}_*\CL^l\hookrightarrow\jmath^2_*{}'\!\CG_l$. The restriction of
the latter morphism to $\on{Sym}^2\cS_0$ is denoted by
$\theta\colon \on{pr}_*\CL^l\hookrightarrow{}'\!\CG_l$.

\begin{Corollary}
\label{Hilbert S0}
The morphism $\theta\colon \on{pr}_*\CL^l\hookrightarrow{}'\!\CG_l$ of
coherent sheaves on $\on{Sym}^2\cS_0$ is an isomorphism.
\end{Corollary}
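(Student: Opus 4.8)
The plan is to deduce the Corollary from Proposition~\ref{costalk Hilbert} by flat base change, reducing it to an isomorphism between localized homology modules, and then to verify that isomorphism --- ideally straight from the localization property encoded in \ref{rem:further}, and otherwise self-containedly by a character comparison modelled on \ref{monopole Hilbert}.

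\textbf{Reduction.} By Proposition~\ref{costalk Hilbert}(c), $\CG_l\cong\on{HC}_*\CL^l$ on $\on{Sym}^2\BA^2$, where $\on{HC}\colon\on{Hilb}^2(\BA^2)\to\on{Sym}^2(\BA^2)$ denotes the Hilbert--Chow morphism. Since $\on{Hilb}^2(\cS_0)$ is exactly the open subscheme $\on{HC}^{-1}(\on{Sym}^2\cS_0)$ of $\on{Hilb}^2(\BA^2)$ and $\CL$ restricts there to the determinant bundle of $\on{Hilb}^2(\cS_0)$, flat base change along the open embedding $\jmath^2$ yields the identification $\jmath^{2*}\CG_l\cong\on{pr}_*\CL^l$ already used above to define $\theta$. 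Tracing through that definition, and recalling from the discussion preceding the Corollary that $H^{G_\cO}_*({}'\cR)$ is the localization of $H^{G_\cO}_*(\cR)=\BC[\on{Sym}^2\BA^2]$ cutting out the open set $\on{Sym}^2\cS_0$, the morphism $\theta$ is identified with the natural arrow
\[
\bar\theta\colon H^{G_\cO}_*(\tilde\cR^l)\otimes_{H^{G_\cO}_*(\cR)}H^{G_\cO}_*({}'\cR)\longrightarrow H^{G_\cO}_*({}'\tilde\cR^l),
\]
so it remains to prove $\bar\theta$ is an isomorphism.

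\textbf{Verification.} Both sides of $\bar\theta$ are rank-one torsion-free $\BC[\on{Sym}^2\cS_0]$-modules (the source a localization of the torsion-free $H^{G_\cO}_*(\tilde\cR^l)$, the target by the standard properties of Coulomb-branch homology modules), and $\bar\theta$ is nonzero; hence $\bar\theta$ is injective, and it suffices to compare bigraded (dilatation $\times$ $G_F$) characters. The character of the target is the monopole formula for $(\tilde G,{}'\bN)$ --- the sum of \ref{monopole Hilbert} with the two $\Hom(W,V)$--type summands deleted --- and the character of the source is the character of $\Gamma(\on{Hilb}^2(\BA^2),\CL^l)$ computed in \ref{monopole Hilbert}, multiplied by the geometric series in the powers of the symmetric function $f=x_1x_2$ that cuts out $\on{Sym}^2\BA^2\setminus\on{Sym}^2\cS_0$ (here $x$ is the coordinate on $\BA^2$ that becomes invertible on $\cS_0$). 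A summation in the spirit of the proof of \ref{monopole Hilbert} then shows the two agree, whence $\bar\theta$, and therefore $\theta$, is an isomorphism.

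\textbf{Main obstacle.} If \ref{rem:further} is stated strongly enough --- namely that replacing $\bN=V\oplus\gl(V)$ by the summand ${}'\bN=\gl(V)$ inverts precisely the function cutting out $\on{Sym}^2\BA^2\setminus\on{Sym}^2\cS_0$, compatibly with every fibre $\tilde\cR^l\to{}'\tilde\cR^l$ --- then the Reduction step alone finishes the proof and no character count is needed. Otherwise the real work is exactly that character count: pinning down the $\BC^\times\times G_F$--equivariant structure on $\on{Hilb}^2(\cS_0)$ and its determinant bundle (equivalently, the dilatation and flavour weights of $f$) and carrying out the attendant geometric--series bookkeeping, a mild but genuine elaboration of the computation in \ref{monopole Hilbert}.
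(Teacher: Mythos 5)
Your ``Reduction'' step is fine but carries no content beyond what the paper already records when it \emph{defines} $\theta$: the identification $\jmath^{2*}\CG_l\cong\on{pr}_*\CL^l$ via \ref{costalk Hilbert}(c) and flat base change is exactly how $\theta$ is constructed in the paragraph preceding the Corollary, so you have only rephrased the statement as ``$\bar\theta$ is an isomorphism.'' The genuine gap is in the ``Verification.'' A character comparison is not available here: once you pass from $\on{Sym}^2\BA^2$ to $\on{Sym}^2\cS_0$ (equivalently from $\bN=V\oplus\gl(V)$ to ${}'\bN=\gl(V)$), the graded pieces of both modules become infinite-dimensional. Concretely, deleting the $\Hom(W,V)$-contributions $|\lambda_1+l|+|\lambda_2+l|$ from the exponent in \ref{monopole Hilbert} leaves (already at $l=0$) the exponent $-2|\lambda_1-\lambda_2|+2|\lambda_1-\lambda_2|=0$ for \emph{every} pair $\lambda_1>\lambda_2$, so the ``monopole formula for $(\tilde G,{}'\bN)$'' has divergent coefficients; equivalently, multiplying the character of $\Gamma(\on{Hilb}^2(\BA^2),\CL^l)$ by the geometric series $\sum_{k\ge0}(\operatorname{char}f)^{-k}$ produces infinite coefficients. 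There is no finer torus that repairs this ($G_F$ acts on $\tilde\cR^l$ with the single weight $l$, and even the $\pi_1(G)$-grading does not cut the fibers down to finite dimension). Your fallback --- that \ref{rem:further} might already assert that passing to ${}'\bN$ ``inverts precisely the function cutting out the complement, compatibly with every fibre'' --- is circular: that compatibility for the modules $H^{G_\cO}_*(\tilde\cR^l)\to H^{G_\cO}_*({}'\tilde\cR^l)$ is exactly the content of the Corollary, not an input to it.

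The paper's proof uses a different and essentially unavoidable technique: base change to the Cartan $\ft$ with coordinates $w_1,w_2$, and check that $\theta$ becomes an isomorphism after localization at a general point $t$ of each generalized root hyperplane $w_1-w_2=0$, $w_1=0$, $w_2=0$. On the diagonal the fixed-point sets $(\tilde\cR^l)^t$ and $({}'\tilde\cR^l)^t$ literally coincide; on $w_r=0$ both fixed-point sets factor as $\Gr_{T_1}\times(\cdot)$ and the induced map of localized $T_\cO$-equivariant homologies is identified with $(\on{id}\times\jmath)_t$. Since $\theta$ is then an isomorphism outside codimension two, it is an isomorphism everywhere by the usual extension argument. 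If you want to keep your outline, you must replace the character count by this localization-at-root-hyperplanes computation (or some equivalent codimension-one analysis); as written, the key step of your proof cannot be carried out.
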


\begin{proof}
Let $T\subset\GL(V)=\GL(2)$ be the diagonal torus with Lie algebra
$\ft\subset\gl(V)=\gl(2)$, with coordinates $w_1,w_2$.
The canonical projection
$\on{Sym}^2\BA^2=\CM_C(G,\bN)\to\ft/S_2=\on{Sym}^2\BA^1$ is the symmetric
square of the morphism $\BA^2\to\BA^1,\ (u,v)\mapsto uv$. The generalized
roots in $\ft^\vee$ are $w_1,w_2,w_1-w_2$.
We change the base to $\ft\to\ft/S_2$ and
localize at a general point $t$ of the diagonal $w_1-w_2=0$. The corresponding
fixed point sets coincide:
$({}'\tilde\cR^l)^t = (\tilde\cR^l)^t$;
\begin{NB}
$('\!\tilde\pi{}^{-1}(l))^t=(\tilde\pi{}^{-1}(l))^t$;  
\end{NB}%
hence $\theta$ is an isomorphism over the general points of diagonal.

Now let $t$ be a general point of the divisor $w_2=0$. Then the fixed point set
$({}'\tilde\cR^l)^t$ (resp.\ $(\tilde\cR^l)^t$)
\begin{NB}
$('\!\tilde\pi{}^{-1}(l))^t$ (resp.\ $(\tilde\pi{}^{-1}(l))^t$)   
\end{NB}%
splits as a
product $\Gr_{T_1}\times\Gr_{T_2}$ (resp.\ $\Gr_{T_1}\times\CR_{T_2,\bN'}$). Here
$T_1$ (resp.\ $T_2$) is a $1$-dimensional torus with coordinate $\sw_1$ (resp.\
$\sw_2$) with differential $w_1$ (resp.\ $w_2$), and $\bN'$ is the
$1$-dimensional representation of $T_2$ with character $\sw_2$. Note that the
flavor group disappeared since its action is absorbed into the action of $T_2$.
The morphism of localizations
$$\left(\BC[\ft_1\times T_1^\vee]\otimes\BC[\BA^2]\right)_t=
H_*^{T_\CO}((\tilde\cR^{l})^t)_t\to
H_*^{T_\CO}(({}'\tilde\cR^{l})^t)_t=
\left(\BC[\ft_1\times T_1^\vee]\otimes\BC[\cS_0]\right)_t$$
\begin{NB}
$$\left(\BC[\ft_1\times T_1^\vee]\otimes\BC[\BA^2]\right)_t=
\left(H_*^{T_\CO}(\tilde\pi{}^{-1}(l))^t\right)_t\to
\left(H_*^{T_\CO}('\!\tilde\pi{}^{-1}(l))^t\right)_t=
\left(\BC[\ft_1\times T_1^\vee]\otimes\BC[\cS_0]\right)_t$$
\end{NB}%
at the level of spectra is nothing but $(\on{id}\times\jmath)_t$.
The same argument takes care of the general points of the divisor $w_1=0$.
Hence the base change of $\theta$ is an isomorphism over the general points
of all the root hyperplanes. We conclude that $\theta$ is an isomorphism.
\end{proof}

\subsection{Factorization}
\label{factorization2}
The projection $\varpi_1\colon \cS_1=\BA^2\to\BA^1,\ (u,v)\mapsto w=uv$, induces
the projection $\varpi_n\colon\on{Hilb}^n(\cS_1)=
\on{Hilb}^n(\BA^2)\stackrel{\bpi_n}{\longrightarrow}\on{Sym}^n\BA^2
\stackrel{\varPi_n}{\longrightarrow}\on{Sym}^n\BA^1=\BA^{(n)}$.
The embedding $\BG_m\subset\BA^1$ induces the embedding
$\BG_m^{(n)}\subset\BA^{(n)}$. We denote by $\oG_m^{(n)}\subset\BG_m^{(n)}$ the
open subset formed by the complement to all the diagonals; we have a Galois
$S_n$-covering $\oG_m^n\to\oG_m^{(n)}$. We have
\begin{equation}
\label{gen fac}
\oG_m^n\times_{\oG_m^{(n)}}\varpi_n^{-1}(\oG_m^{(n)})=
\oG_m^n\times_{\oG_m^{(n)}}\varPi_n^{-1}(\oG_m^{(n)})=\oG_m^n\times\BG_m^n
\end{equation}
with coordinates $w_1,\ldots,w_n$ on the first factor, and $v_1,\ldots,v_n$
on the second factor. We denote the base change
$\BA^n\times_{\BA^{(n)}}\on{Hilb}^n(\BA^2)$ (resp.\
$\BA^n\times_{\BA^{(n)}}\on{Sym}^n\BA^2$) by $\unl{\on{Hilb}}^n(\BA^2)$ (resp.\
$\unl{\on{Sym}}^n\BA^2$).
We have factorization isomorphisms for $n=n'+n''$:
$$\unl{\on{Hilb}}^n(\BA^2)|_{(\BA^{n'}\times\BA^{n''})_{\on{disj}}}\iso
(\unl{\on{Hilb}}^{n'}(\BA^2)\times
\unl{\on{Hilb}}^{n''}(\BA^2))|_{(\BA^{n'}\times\BA^{n''})_{\on{disj}}},$$
$$\unl{\on{Sym}}^n\BA^2|_{(\BA^{n'}\times\BA^{n''})_{\on{disj}}}\iso
(\unl{\on{Sym}}^{n'}\BA^2\times
\unl{\on{Sym}}^{n''}\BA^2)|_{(\BA^{n'}\times\BA^{n''})_{\on{disj}}}$$
compatible with~\ref{gen fac}. By the definition of the determinant line
bundle, we also have the following factorization isomorphisms:
$$\left(\unl{\on{Hilb}}^n(\BA^2)|_{(\BA^{n'}\times\BA^{n''})_{\on{disj}}},\CL^l\right)\iso
\left((\unl{\on{Hilb}}^{n'}(\BA^2)\times
\unl{\on{Hilb}}^{n''}(\BA^2))|_{(\BA^{n'}\times\BA^{n''})_{\on{disj}}},
\CL^l\boxtimes\CL^l\right),$$
\begin{equation}
\label{fac}
\left(\unl{\on{Sym}}^n\BA^2|_{(\BA^{n'}\times\BA^{n''})_{\on{disj}}},\bpi_{n*}\CL^l\right)
\iso\left((\unl{\on{Sym}}^{n'}\BA^2\times
\unl{\on{Sym}}^{n''}\BA^2)|_{(\BA^{n'}\times\BA^{n''})_{\on{disj}}},
\bpi_{n'*}\CL^l\boxtimes\bpi_{n''*}\CL^l\right)
\end{equation}
compatible with the $S_n$-equivariant trivialization
\begin{equation}
\label{gen triv}
\left(\oG_m^n\times_{\oG_m^{(n)}}\varpi_n^{-1}(\oG_m^{(n)}),\CL^l\right)=
\left(\oG_m^n\times_{\oG_m^{(n)}}\varPi_n^{-1}(\oG_m^{(n)}),\bpi_{n*}\CL^l\right)=
\left(\oG_m^n\times\BG_m^n,\CO_{\overset{\circ}\BG{}_m^n\times\BG_m^n}\right)
\end{equation}
arising from the factorization and the identification
\begin{equation}
\label{n=1}
\left(\varpi_1^{-1}(\BG_m),\CL^l\right)=
\left(\varPi_1^{-1}(\BG_m),\bpi_{1*}\CL^l\right)=
\left(\BG_m\times\BG_m,\CO_{\BG_m\times\BG_m}\right).
\end{equation}
We will need the following particular case of the above factorization
isomorphisms:
$$\left((\BG_m^{(n-1)}\times\BA^1)_{\on{disj}}\times_{\BA^{(n)}}
\on{Hilb}^n(\BA^2),\CL^l\right)\iso$$
$$\left((\BG_m^{(n-1)}\times\BA^1)_{\on{disj}}
\times_{\BA^{(n-1)}\times\BA^1}(\on{Hilb}^{n-1}(\BA^2)\times\BA^2),
\CL^l\boxtimes\CL^l\right),$$
\begin{multline}
\label{origin fac}
\left((\BG_m^{(n-1)}\times\BA^1)_{\on{disj}}\times_{\BA^{(n)}}
\on{Sym}^n\BA^2,\bpi_{n*}\CL^l\right)\iso\\
\left((\BG_m^{(n-1)}\times\BA^1)_{\on{disj}}
\times_{\BA^{(n-1)}\times\BA^1}(\on{Sym}^{n-1}\BA^2\times\BA^2),
\bpi_{n-1,*}\CL^l\boxtimes\bpi_{1*}\CL^l\right).
\end{multline}

\subsection{Determinant sheaves via homology groups of fibers}
\label{det via}
We change slightly the setup of~\ref{deg 2}: we consider
$G=\GL(V)=\GL(n),\ G_F=\BC^\times,\ \tilde{G}=G\times G_F$.
The $G=\GL(V)$-module $\bN=V\oplus\gl(V)$ carries a commuting dilatation
$G_F$-action; these two actions together give rise to the action of
$\tilde{G}$ on $\bN$. According to~\ref{prop:ad_taut}, the Coulomb branch
$\CM_C(G,\bN)$ is identified with $\on{Sym}^n(\BA^2)$.
In this case we have
$H_*^{G_\CO}(\cR)\cong\BC[\on{Sym}^n(\BA^2)]=\BC[\on{Hilb}^n(\BA^2)]$,
see \ref{prop:ad_taut}.
\begin{NB}
The algebra $i^!_0\tilscA^{\on{for}}$ is nothing but the
Coulomb branch
$H_*^{G_\CO}(\cR)\cong\BC[\on{Sym}^n(\BA^2)]=\BC[\on{Hilb}^n(\BA^2)]$.
\end{NB}%
For $l\in\BN\subset\BZ=\Gr_{G_F}$, $H^{G_\cO}_*(\tilde\cR^l)$
\begin{NB}
the costalk $i^!_l\tilscA^{\on{for}}$  
\end{NB}%
forms
a module over the algebra $H^{G_\cO}_*(\cR)$ as in the case $n=2$,
\begin{NB}
  $i^!_0\tilscA^{\on{for}}$,
\end{NB}%
and we want to identify this
module with
$\Gamma(\on{Hilb}^n(\BA^2),\CL^l)=\Gamma(\on{Sym}^n\BA^2,\bpi_{n*}\CL^l)$.
\begin{NB}
The costalk $i^!_l\tilscA^{\on{for}}$ is nothing but
$H_*^{G_\CO}(\tilde\cR^l)$ where
$\tilde\pi\colon \CR_{\tilde G,\bN}\to\Gr_{G_F}$, see~\eqref{eq:52}.
\end{NB}%
Recall that $\on{Spec}H^*_{G_\CO}(\on{pt})=\BA^{(n)}\leftarrow\BA^n=
\on{Spec}H^*_{T_\CO}(\on{pt})$, and the base change under
$\BA^{(n)}\leftarrow\BA^n$ gives $H_*^{T_\CO}(\tilde\cR^l)$, where $T$
is a Cartan torus of $G$.
If we further localize to $\oG_m^n\subset\BA^n$, we have a localization
isomorphism $\bz^*\iota_*^{-1}\colon H_*^{T_\CO}(\tilde\cR^l)_{\on{loc}}\iso
H_*^{T_\CO}(\hat\pi^{-1}(l))_{\on{loc}}$ where
$\hat\pi\colon\Gr_{T\times G_F}\to\Gr_{G_F}$ is the obvious projection.
But $H_*^{T_\CO}(\hat\pi^{-1}(l))\cong H_*^{T_\CO}(\Gr_T)=\BC[\BA^n\times\BG_m^n]$
by~\ref{rem:W-cover}(2). All in all, we obtain an $S_n$-equivariant trivialization
\begin{equation}
  \label{costalk triv}
  H^{T_\cO}_*(\tilde\cR^l)
\begin{NB}
  = i^!_l\tilscA^{\on{for}}|_{\overset{\circ}\BG{}_m^n}  
\end{NB}%
\cong
\CO_{\overset{\circ}\BG{}_m^n\times\BG_m^n}.
\end{equation}
Composing with the trivialization~\ref{gen triv}, we obtain a rational
isomorphism of $\BC[\on{Sym}^n\BA^2]$-modules
$\theta\colon \Gamma(\on{Sym}^n\BA^2,\bpi_{n*}\CL^l)\dasharrow
H^{G_\cO}_*(\tilde\cR^l)
\begin{NB}
= i^!_l\tilscA^{\on{for}}  
\end{NB}%
$.

\begin{Theorem}
\label{det via hom}
The rational isomorphism $\theta\colon \Gamma(\on{Sym}^n\BA^2,\bpi_{n*}\CL^l)
\dasharrow H^{G_\cO}_*(\tilde\cR^l)
\begin{NB}
=  i^!_l\tilscA^{\on{for}}
\end{NB}%
$ extends to the regular isomorphism of
$\BC[\on{Sym}^n\BA^2]$-modules
$\theta\colon \Gamma(\on{Sym}^n\BA^2,\bpi_{n*}\CL^l)\iso
H^{G_\cO}_*(\tilde\cR^l)
\begin{NB}
=  i^!_l\tilscA^{\on{for}}
\end{NB}%
$.
\end{Theorem}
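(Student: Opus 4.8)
The plan is to prove the theorem by induction on $n$, using the factorization isomorphisms of \ref{factorization2} to reduce the statement to its codimension-one behaviour together with the small cases $n\le 2$. The base cases are $n=1$, where $\on{Hilb}^1(\BA^2)=\BA^2$ and $\CL=\cO_{\BA^2}$, so that $\Gamma(\on{Hilb}^1(\BA^2),\CL^l)=\BC[\BA^2]$, while on the Coulomb side $G=\GL(1)$ is abelian and a direct computation with the monopole formula (in the spirit of the explicit formulas at the end of \ref{Klein via}, cf.\ \ref{rem:ambiguity}, \ref{sec:abelian}) shows that $H^{G_\cO}_*(\tilde\cR^l)$ is a free $\BC[\BA^2]$-module of rank one on which $\theta$ carries a generator to a generator; and $n=2$, which is exactly \ref{costalk Hilbert}.

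For the inductive step ($n\ge 3$) I would first record that both $\bpi_{n*}\CL^l$ and the coherent sheaf $\CG_l$ on $\on{Sym}^n\BA^2$ attached to the $\BC[\on{Sym}^n\BA^2]$-module $H^{G_\cO}_*(\tilde\cR^l)$ are reflexive. For $\bpi_{n*}\CL^l$ this holds because $\bpi_n$ is a symplectic (hence crepant) resolution of the normal variety $\on{Sym}^n\BA^2$ and $\CL$ is $\bpi_n$-semiample, so $R^{>0}\bpi_{n*}\CL^l=0$ for $l\ge 0$ by relative Kawamata--Viehweg vanishing, whence $\bpi_{n*}\CL^l$ is Cohen--Macaulay. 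For $\CG_l$ one uses that $\on{Sym}^n\BA^2$ is smooth away from the big diagonal $\Delta$ (of codimension two), so the $S_2$-property of $\CG_l$ needs only be checked along $\Delta$ and on the deeper strata; near any such point the factorization \eqref{fac}, compatible with the trivialization \eqref{gen triv}, identifies $\CG_l$ locally with an exterior product of copies of the sheaves $\CG_{l}$ for strictly smaller values of $n$, which by the cases $n\le 2$ (where $\CG_l=\bpi_{n*}\CL^l$) and the inductive hypothesis are Cohen--Macaulay; hence $\CG_l$ is Cohen--Macaulay as well. Granting reflexivity of both sheaves, it suffices to show that $\theta$ is a regular isomorphism over an open $V\subseteq\on{Sym}^n\BA^2$ with $\codim(\on{Sym}^n\BA^2\setminus V)\ge 2$, for then $\theta=j_*(\theta|_V)$ with $j\colon V\hookrightarrow\on{Sym}^n\BA^2$ the inclusion.

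By construction $\theta$ is already a regular isomorphism over $\varpi_n^{-1}(\oG_m^{(n)})$, and $\on{Sym}^n\BA^2$ is smooth in codimension one, so it remains only to check $\theta$ at the generic point of the two boundary divisors: $D_0$, where one point of the configuration lies on the curve $\{uv=0\}$, and $D_=$, where two points of the configuration have equal image under $\varpi_1$. At the generic point of $D_0$ the configuration consists of one point of $\{uv=0\}$ away from the origin together with $n-1$ generic points having distinct nonzero $\varpi_1$-images; the factorization \eqref{origin fac} (and its $S_n$-descent), compatible with \eqref{gen triv} and \eqref{n=1}, identifies $\theta$ near such a point with $\theta_{n-1}\boxtimes\theta_1$, the first factor evaluated at a point of $\oG_m^{(n-1)}$ where $\theta_{n-1}$ is the trivialization isomorphism by construction, and the second being the $n=1$ base case. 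At the generic point of $D_=$ the factorization \eqref{fac} with $(n',n'')=(2,n-2)$ identifies $\theta$ with $\theta_2\boxtimes\theta_{n-2}$, the second factor evaluated at a point of $\oG_m^{(n-2)}$ and the first being the $n=2$ case \ref{costalk Hilbert}, which in particular gives that $\theta_2$ is an isomorphism over $\{w_1=w_2\}\setminus\Delta$. Hence $\theta$ is a regular isomorphism over the union of $\varpi_n^{-1}(\oG_m^{(n)})$ with the generic loci of $D_0$ and $D_=$, whose complement has codimension $\ge 2$, and by reflexivity $\theta$ is a regular isomorphism everywhere.

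The step I expect to be the main obstacle is the reflexivity (that is, the $S_2$-property) of $\CG_l$ along the big diagonal: a priori $H^{G_\cO}_*(\tilde\cR^l)$ is only a finitely generated torsion-free module over $\BC[\on{Sym}^n\BA^2]$, and controlling its depth there genuinely requires the factorization input (or, alternatively, the purity of the relevant objects on $\Gr_{\tilde G}$). An approach that avoids it, following the remark after \ref{costalk Hilbert}, is the multiplicative one: prove the theorem for $l=1$, show that the multiplication maps $\Gamma(\on{Hilb}^n(\BA^2),\CL^1)^{\otimes l}\to\Gamma(\on{Hilb}^n(\BA^2),\CL^l)$ and $H^{G_\cO}_*(\tilde\cR^1)^{\otimes l}\to H^{G_\cO}_*(\tilde\cR^l)$ are both surjective (the former from relative semiampleness of $\CL$ over $\on{Sym}^n\BA^2$, reducible via the factorization and the $T^*\BP^1$ model to the $n=2$ case), and then transport $\theta$ from $l=1$ to all $l\ge 0$ using a \ref{tensor Klein}-type compatibility of the two multiplications.
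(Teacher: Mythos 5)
Your overall strategy matches the paper's: define $\theta$ by trivialization over $\oG_m^n$, check that it extends at the generic points of the two kinds of boundary divisors ($w_r=w_s$ and $w_r=0$) via factorization and reduction to local models, and then extend across codimension two. But there are two concrete places where your argument does not go through as written. The first is the local model at the generic point of the diagonal divisor $w_r=w_s$: there one has $w_r=w_s\ne 0$, so under localization the summand $V$ of $\bN=V\oplus\gl(V)$ drops out (its weights $w_r,w_s$ are nonzero at the point), and the gauge theory degenerates to $(\GL(2),\gl(2))$, whose Coulomb branch is $\on{Sym}^2\cS_0$, not $\on{Sym}^2\BA^2$. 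Matching this with $\on{pr}_*\CL^l$ on $\on{Sym}^2\cS_0$ is exactly the content of \ref{Hilbert S0}, which the paper proves separately (by comparing fixed-point sets of the two gauge theories at general points of the root hyperplanes) precisely because it does not follow by mere restriction from \ref{costalk Hilbert}: the rational isomorphism in the local model is built from the localization data of the smaller theory, and one must check it agrees with the one coming from $V\oplus\gl(V)$. Your reduction at $D_=$ therefore needs \ref{Hilbert S0}, not just the $n=2$ case of the theorem.

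The second issue is the one you flag yourself, and it is a genuine gap: your route to the $S_2$-property of $\CG_l$ does not close. The factorization isomorphisms \eqref{fac} live over $(\BA^{n'}\times\BA^{n''})_{\on{disj}}$ and hence say nothing at the small diagonal of $\BA^{(n)}$ (all $w_i$ equal, or all equal to $0$), which is exactly where an induction on $n$ has no smaller pieces to quote; so ``Cohen--Macaulay by factorization and induction'' is unavailable there and reflexivity of $\CG_l$ is left unproved. The paper never establishes $S_2$ for the Coulomb-branch module by hand: it concludes by the external criterion \ref{prop:flat} together with \ref{rem:conditions}, whose only hypothesis to be verified on the geometric side is $\varPi_{n*}\bpi_{n*}\CL^l\iso j_*\varPi_{n*}\bpi_{n*}\CL^l|_{\on{Hilb}^n(\BA^2)^\bullet}$, checked via the semismallness of the Hilbert--Chow morphism (complement of codimension $2$); the needed structural properties of $H^{G_\cO}_*(\tilde\cR^l)$ over the base come packaged in that proposition rather than from factorization. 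Your fallback multiplicative approach is closer in spirit to what the paper does for slices in \ref{det via hom slice general}, but it is not the route taken here, and it would still require the surjectivity and \ref{tensor Klein}-type compatibility statements you list.
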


\begin{proof}
We follow the standard scheme, see e.g.\ the proof of~\ref{Coulomb_quivar}.
We have to check that $\theta$ extends through the general points of the
boundary divisor $\BA^n\setminus\oG_m^n$. If a point lies on a diagonal divisor
$w_r=w_s$, we are reduced by localization and factorization~\ref{fac}
to~\ref{Hilbert S0}.
\begin{NB}
  HN: I do not see why $\bz^*\iota_*^{-1}$ is the same as the rational
  isomorphism $\theta$ in \ref{Hilbert S0}.
\end{NB}%
If a point lies
on a coordinate hyperplane $w_r=0$, we are reduced by localization and
factorization~\ref{origin fac},~\ref{n=1} to the evident case $n=1$.
We conclude by an application of~\ref{prop:flat} and~\ref{rem:conditions}.
The condition $\varPi_{n*}\bpi_{n*}\CL^l\iso
j_*\varPi_{n*}\bpi_{n*}\CL^l|_{\on{Hilb}^n(\BA^2)^\bullet}$
of~\ref{rem:conditions} is satisfied since the complement of
$\on{Hilb}^n(\BA^2)^\bullet$
in $\on{Hilb}^n(\BA^2)$ is of codimension $2$. The latter claim follows from
the semismallness of $\bpi_n\colon \on{Hilb}^n(\BA^2)\to\on{Sym}^n\BA^2$.
\end{proof}

\section{Line bundles on Cherkis bow varieties}
\label{line Cherkis}

We can modify the proof of the last section to the case of quiver gauge
theories of affine type $A_{n-1}$ replacing Hilbert schemes by Cherkis
bow varieties, and using results in \cite{2016arXiv160602002N}.
%
%
We use the notation in \cite{2016arXiv160602002N}, hence we assume the reader
is familiar with it.

\subsection{Resolution for bow varieties}
\label{resolution bow}

Given dimension vectors $\underline{\bv} = (\bv_0,\dots,\bv_{n-1})$,
$\underline{\bw} = (\bw_0,\dots,\bw_{n-1})$ we consider
\begin{equation*}
  G \equiv \GL(\underline{\bv}) \defeq \prod_{i=0}^{n-1} \GL(\bv_i), \quad
  \bN \equiv \bN(\underline{\bv},\underline{\bw}) =
  \bigoplus_{i=0}^{n-1} \Hom(\CC^{\bv_i}, \CC^{\bv_{i+1}}) \oplus
  \Hom(\CC^{\bw_i}, \CC^{\bv_i}) 
\end{equation*}
with the natural $G$-action on $\bN$. Let
$\ell = \sum_{i=0}^{n-1} \mathbf w_i$. The Coulomb branch
$\cM_C(G,\bN)$ is isomorphic to a bow variety
$\cM(\underline{\bv},\underline{\bw})$ with a balanced condition,
defined as in \cite[\S2.2]{2016arXiv160602002N}. The definition of
\cite[\S2.2]{2016arXiv160602002N} is more general: we have parameters
$\vk_\sigma\in\QQ$ ($\sigma=1,\dots,\ell$) of the stability condition
for the GIT quotient, where $\cM_C(G,\bN)$ corresponds to the case
$\vk_\sigma = 0$ for $\sigma=1,\dots,\ell$.%
\footnote{%
  It was denoted by $\nu_\sigma^\RR$ in \cite{2016arXiv160602002N}, as we also
  have complex parameters $\nu^\CC = (\nu^\CC_\sigma)_\sigma$, which
  we set $0$ for brevity here.
}
%
%
We have a $\QQ$-line bundle 
from the construction, which is an actual line bundle if
$\vk_\sigma\in\ZZ$ for $\sigma=1,\dots,\ell$. We suppose
$\vk_\sigma\in\ZZ$ hereafter.

There is one more extra parameter $\vk_*\in\ZZ$, which was not
explicitly explained in \cite{2016arXiv160602002N}.\footnote{In the
  original description \cite[\S2.1]{2016arXiv160602002N} of bow
  varieties as solutions of Nahm's equations, parameters $\vk_\sigma$,
  $\vk_*$ are put as the level of real part of the hyper-K\"ahler
  moment map.} It corresponds to the quotient where either one of the
stability conditions (C-S1) or (C-S2) is required in
\cite[Prop.~6.4]{2016arXiv160602002N}.

Let us number vector spaces appearing in the definition of bow
varieties as in \cite[\S6.1]{2016arXiv160602002N}.
\begin{align*}
&\xymatrix@C=1em{ & V_{i-1}^{\bw_{i-1}} \ar@(ur,ul)_{B_{i-1}} \ar[rr]^{A_{i-1}} \ar[dr]_{b_{i-1}} \ar@{.}[l] && V_{i}^0  \ar@(ur,ul)_{B'_{i}} \ar@<-.5ex>[rr]_{C_{1,i}} && V_{i}^1 \ar@<-.5ex>[ll]_{D_{1,i}} \ar@<-.5ex>[rr]_{C_{2,i}} && \ar@<-.5ex>[ll]_{D_{2,i}} \ar@{.}[r] & \ar@<-.5ex>[rr]_{C_{\bw_i-1,i}} && V_{i}^{\bw_i-1} \ar@<-.5ex>[ll]_{D_{\bw_i-1,i}} \ar@<-.5ex>[rr]_{C_{\bw_i,i}} && V_i^{\bw_i} \ar@<-.5ex>[ll]_{D_{\bw_i,i}} \ar@(ur,ul)_{B_{i}} \ar[rr]^{A_{i}} \ar[dr]_{b_{i}} && V_{i+1}^0  \ar@(ur,ul)_{B'_{i+1}} \ar@{.}[r] & \\
 & & \CC \ar[ur]_{a_{i}} & && && & && && & \CC \ar[ur]_{a_{i+1}} & & }.
\end{align*}
In particular, $\sigma$ ($\sigma=1,\dots,\ell$) is indexed as $(\ialpha,i)$
($i=0,\dots,n-1$, $\ialpha=1,\dots,\bw_i$). We introduce the character
corresponding to parameters\footnote{%
  We consider the `corresponding' complex parameter $\nu^\CC_*$ in
  \cite[6.2]{2016arXiv160602002N}, but we put it for all $i$. But the sum over
  $i$ only matters, so our $\vk_*$ should be compared with
  $n\nu^\CC_*$.} $\vk_*$, $\vk_{\ialpha,i}$ by
\begin{multline}\label{eq:bow1}
  \prod_{i=0}^{n-1}
    (\det V_i^1)^{-\vk_{1,i} + \vk_{2,i}} \cdots
    (\det V_i^\ialpha)^{- \vk_{\ialpha,i} + \vk_{\ialpha+1,i}} \cdots
    \\
    (\det V_i^{\bw_i-1})^{-\vk_{\bw_i-1,i} + \vk_{\bw_i,i}}
    (\det V_i^{\bw_i})^{- \vk_{\bw_i,i} + \vk_{1,i+1} + \delta_{i+1,0}\vk_*}.
\end{multline}
\begin{NB}
  another possible choice :
  \begin{equation*}
  (\det
  V_i^0)^{\delta_{i0}\vk_*-\vk_{\bw_{i-1},i-1}+\vk_{1,i}} (\det
  V_i^1)^{-\vk_{1,i} + \vk_{2,i}} \cdots (\det
  V_i^\ialpha)^{-\vk_{\ialpha,i} + \vk_{\ialpha+1,i}} \cdots (\det
  V_i^{\bw_i-1})^{-\vk_{\bw_i-1,i} + \vk_{\bw_i,i}}.
  \end{equation*}
  \end{NB}%
Note that the simultaneous shift
$\vk_{\ialpha,i}\mapsto \vk_{\ialpha,i}+s$, while keeping $\vk_*$, is
irrelevant.
\begin{NB}
  The convention in \cite[\S2.2(c)]{2016arXiv160602002N} is
  $\chi(g_\zeta) = \prod \det (g_\zeta)^{-\nu^\RR_\zeta}$. The
  numerical criterion \cite[\S2.4.2]{2016arXiv160602002N} is written in this
  convention.
\end{NB}%

\begin{NB3}
  The assumption is moved to here.
\end{NB3}%
We assume
\begin{equation}\label{eq:bow3}
    \vk_{1,i}\ge \vk_{2,i}\ge\dots\ge \vk_{\bw_i,i}.
\end{equation}
In particular, all powers appearing in \eqref{eq:bow1} except the last
one are nonpositive.
This assumption is not essential, as it is satisfied if we renumber
$\vk_{\alpha,i}$. Alternatively we apply reflection functors for
quiver varieties \cite{Na-reflect} on two way parts. Here we regard
$V_i^0$ and $V_i^{\bw_i}$ as framing vector spaces, and do not touch
for reflection functors.

These powers, especially the last one, look slightly different from
\cite[(6.3)]{2016arXiv160602002N}, where the corresponding complex
parameters $\nu^\CC_*$, $\nu^\CC_{\ialpha,i}$ are put in the defining
equation.
\begin{NB}
It becomes closer
\begin{equation*}
    \prod_{i=0}^{n-1} (\det V_i^0)^{\delta_{i0}\vk_*+\vk_{1,i}}
    (\det V_i^1)^{-\vk_{1,i} + \vk_{2,i}} \cdots
    (\det V_i^k)^{-\vk_{k,i} + \vk_{k+1,i}} \cdots
    (\det V_i^{\bw_i})^{-\vk_{\bw_i,i}}.
\end{equation*}
\end{NB}%
But it is implicit in the proof of \cite[Prop.~2.9]{Takayama} (see also
\cite[Prop.~3.2]{2016arXiv160602002N} and the numerical criterion
\cite[Def.~2.7]{2016arXiv160602002N}%
\begin{NB}
  where only $S_{\zeta^\pm}$, $T_{\zeta^\pm}$ with
  $S_{\zeta^-}\xrightarrow[\cong]{A_\zeta} S_{\zeta^+}$,
  $V_{\zeta^-}/T_{\zeta^-}\xrightarrow[\cong]{A_\zeta} V_{\zeta^+}/T_{\zeta^+}$
  are considered.
\end{NB}%
) that we have an isomorphism
\begin{NB}
$\det V_{i-1}^{\bw_{i-1}}\cong \det V_{i}^0$,
\end{NB}%
\begin{equation}\label{eq:bow7}
  \det V_i^{\bw_i}\cong \det V_{i+1}^0,
\end{equation}
\begin{NB}
  If $\dim V^{\bw_{i-1}}_{i-1} = \dim V_{i}^0$, $A_{i-1}$ is an isomorphism. If
  $m := \dim V_{i}^0 > n:= \dim V^{\bw_{i-1}}_{i-1}$, $A_{i-1}$ is injective and
  $V^0_{i}/\Ima A_{i-1}$ has a base which is the image of $a_{i}$,
  $B'_{i}a_{i}$, \dots, $(B'_{i})^{m-n-1} a_{i}$. The case
  $\dim V_{i}^0 < \dim V^{\bw_{i-1}}_{i-1}$ is similar.
\end{NB}%
hence the appearance of $\vk_{1,i+1}$
\begin{NB}
$\vk_{\bw_{i-1},i-1}$
\end{NB}%
in $\det V_i^{\bw_i}$
\begin{NB}
$\det V_i^{0}$
\end{NB}%
is natural.
Let us denote the corresponding GIT quotient by
$\cM_{\vk}(\underline{\bv},\underline{\bw})$, where $\vk$ should be
understood as $\vk_*\in\ZZ$, $(\vk_{\ialpha,i})\in\ZZ^\ell/\ZZ$. Let us
denote the corresponding line bundle by $\cL_{\vk}$. We have the
projective morphism
$\boldsymbol{\pi}\colon \cM_{\vk}(\underline{\bv},\underline{\bw})\to
\cM_{0}(\underline{\bv},\underline{\bw})$.
Let
$\BA^{\underline{\bv}} = \prod_{i=0}^{n-1} \BA^{\bv_i}/\mathfrak
S_{\bv_i}$. We have a factorization morphism
$\Psi\colon \cM_{\vk}(\underline{\bv},\underline{\bw})\to
\BA^{\underline{\bv}}$, given by eigenvalues of $B_i$ with
multiplicities, which are same as eigenvalues of $B'_i$ thanks to the
defining equation of bow varieties.
We can apply \ref{prop:flat} later, as
$\cM_{\vk}(\underline{\bv},\underline{\bw})$ is normal
(\cite[Th.~6.15]{2016arXiv160602002N}) and all fibers of $\Psi$ have the same
dimension (\cite[Prop.~6.13]{2016arXiv160602002N}), hence the condition of
\ref{rem:conditions} is satisfied.
Note that $\Psi$ factors through $\boldsymbol{\pi}$.

We have the factorization property
\begin{equation*}
    \cM_{\vk}(\underline{\bv},\underline{\bw})
    \times_{\BA^{\underline{\bv}}}
    (\BA^{\underline{\bv}'}\times\BA^{\underline{\bv}''})_{\mathrm{disj}}
    \cong
    \left(\cM_{\vk}(\underline{\bv}',\underline{\bw})
    \times\cM_{\vk}(\underline{\bv}'',\underline{\bw})\right)
  \times_{\BA^{\underline{\bv}'}\times\BA^{\underline{\bv}''}}
  (\BA^{\underline{\bv}'}\times\BA^{\underline{\bv}''})_{\mathrm{disj}}.
\end{equation*}
See \cite[Th.~6.9]{2016arXiv160602002N}. From its construction the line bundle
$\cL_{\vk}$ is compatible with the factorization, namely $\cL_{\vk}$
on $\cM_{\vk}(\underline{\bv},\underline{\bw})$ is sent to
$\cL_{\vk}\boxtimes\cL_{\vk}$ on
$\cM_{\vk}(\underline{\bv}',\underline{\bw})
\times\cM_{\vk}(\underline{\bv}'',\underline{\bw})$.
This is because $\cL_{\vk}$ is coming from the character
$\vk$, given by the product of determinants of $\GL(\bv_i)$ as in
\eqref{eq:bow1}, and it factors according to a decomposition
$V_i^\ialpha = (V_i^\ialpha)'\oplus (V_i^\ialpha)''$.
Note that this construction chooses an isomorphism between $\cL_\vk$
and $\cL_\vk\boxtimes\cL_\vk$ canonically. This choice will become
more explicit in the factorization formula \eqref{eq:bow2} of a
section $y_i^\ialpha$ later.
\begin{NB}\linelabel{NB:vague}
  This is a little vague, but added for a clarification.
\end{NB}%
This is a generalization of statements in \ref{factorization2}.

Let $\oA^{|\underline{\bv}|}$ denote the open subset of
$\BA^{|\underline{\bv}|}$ consisting of $w_i^k \neq w_i^l$
($k\neq l$), $w_i^k \neq w_{i+1}^l$, $w_i^k\neq 0$ (for $i$ with
$\bw_i\neq 0$). Let
$\oA^{\underline{\bv}} = \oA^{|\underline{\bv}|}/\prod\mathfrak
S_{\bv_i}$. It is the complement of union of all generalized root
hyperplanes of $(G,\bN)$ in the sense of \ref{sec:fixed-points}.

\begin{NB}
  The following is a record of an earlier attempt, which did not work
  out.

Let us define a rational section $y^\vk$ of $\cL_\vk$ defined over
$\Psi^{-1}(\oA^{\underline{\bv}})$.
By factorization $\cM_\vk(\underline{\bv},\underline{\bw})$ is
isomorphic to products of bow varieties with $\bv_i = 1$, $\bv_j = 0$
($j\neq i$) over $\oA^{|\underline{\bv}|}$. Since the line bundle
$\cL_\vk$ also factors, it is enough to define a rational section for
each factor, hence we may assume that $\bv_i=1$, $\bv_j=0$.
Those bow varieties are \cite[6.5.1]{2016arXiv160602002N} ($n=1$) and
\cite[6.5.3]{2016arXiv160602002N} ($n > 1$).

Let us first consider the case $n > 1$. Then the bow variety is
defined as
\begin{align*}
    \xymatrix@C=1.2em{
  & \CC \ar@(ur,ul)_{w_i} \ar@<-.5ex>[rr]_{C_{1,i}}
  && \CC \ar@<-.5ex>[ll]_{D_{1,i}} \ar@<-.5ex>[rr]_{C_{2,i}}
    && \cdots \ar@<-.5ex>[ll]_{D_{2,i}} \ar@<-.5ex>[rr]_{C_{N,i}}
  && \CC \ar@(ur,ul)_{w_i} \ar@<-.5ex>[ll]_{D_{N,i}} \ar[dr]_{b_i} &
  \\
\CC \ar[ur]_{a_i} && &&&&&& \CC.
      }
\end{align*}
We have
$\cM_0(\underline{\bv},\underline{\bw})\cong \cS_N = \{ yz=w^{N}\}$.
Here $a_i$ and $b_i$ are invertible thanks to the conditions
(S1),(S2), and we set $y = b_iC_{N,i}\cdots C_{1,i}a_i$,
$z = a_i^{-1}D_{1,i}\cdots D_{N,i}b_i^{-1}$.
Over $w_i\neq 0$, we have $C_{\ialpha,i}$, $D_{\ialpha,i}\neq 0$ from
the defining equations. Then the $\vk$-stability condition is
automatically satisfied, hence
$\cM_0(\underline{\bv},\underline{\bw})|_{w_i\neq 0}\cong
\cM_\vk(\underline{\bv},\underline{\bw})|_{w_i\neq 0}$.
We define a section $y_i^\ialpha$ ($\ialpha=1,\dots,N$) of the line
bundle $(\det V_i^{\ialpha})^*$ by
$y_i^\ialpha = b_i C_{N,i}\cdots C_{\ialpha+1,i}$.
This is nonvanishing over $w_i\neq 0$, as we have $C_{\ialpha,i}\neq 0$
as above.
We define a section $y^\vk$ of $\cL_\vk$ by
\begin{equation*}
  y^\vk \defeq
  (y_i^1)^{-\vk_{1,i}+\vk_{2,i}}\cdots
  (y_i^\ialpha)^{-\vk_{\ialpha,i}+\vk_{\ialpha+1,i}}\cdots
  (y_i^{\bw_i-1})^{-\vk_{\bw_i-1,i}+\vk_{\bw_i,i}}
  (y_i^{\bw_i})^{-\vk_{\bw_i,i}+\vk_{1,i+1}+\delta_{i+1,0}\vk_*}.
\end{equation*}

Let us next consider the case $n=1$. The bow variety is defined by
\begin{align*}
    \xymatrix@C=1.2em{
      &&&&
      \\
      & \CC \ar@(ur,ul)_{w} \ar[rr]^{A} \ar[dr]_{b}
      \ar@<-2pt> `l[lu] `[ur] `[1,3]_{D_{1}\cdots D_\bw} `_l[0,2] [0,2]
      \ar@{<-} @<+2pt> `l[lu] `[ur] `[1,3]^{C_{\bw}\cdots C_1} `_l[0,2] [0,2]
      && \CC \ar@(ur,ul)_{w'} &
  \\
  && \CC. \ar[ur]_a &&
      }
\end{align*}
We have $w' = -D_1C_1 = \cdots = -C_{\bw}D_{\bw} = w$.
Then $B_2A-AB_1 + ab = 0$ means $ab=0$.
Since $w\neq 0$, we have $C_\ialpha$, $D_\ialpha\neq 0$. Then the
stability condition depends only on $\vk_*$.
If $\vk_* = 0$, it is an affine algebro-geometric quotient, hence we
can take a representative $a=b=0$ in closed orbits.
If $\vk_* < 0$, we have the condition (C-S1) in
\cite[Prop.~6.4]{2016arXiv160602002N} must be satisfied. Thus we have $b\neq 0$
and hence $a=0$.
If $\vk_* > 0$, we have the condition (C-S2) instead. Hence $a\neq 0$, $b=0$.

, and in closed orbits we can  when $\bw\neq 0$.
We have $y = C_{\bw \cdots 1}$. Let us set $x = (-1)^\bw D_{1 \cdots \bw}$.
Then we get $xy = w^{\bw}$ because $w= -D_1C_1$, so
\begin{align*}
\CC[\cM(1, \bw)] = \CC[w, y, x]/ (xy -w^{\bw}).
\end{align*}
\end{NB}%

We order eigenvalues of $B_i$ (which are also eigenvalues of $B'_i$)
as $w_{i,1}$, \dots, $w_{i,\bv_i}$. We consider them as coordinates of
$\BA^{\bv_i}$, and functions on
$\cM_{\vk}(\underline{\bv},\underline{\bw})\times_{\BA^\bv}
\BA^{|\bv|}$. (Here $|\bv| = \sum \bv_i$.)
Define a section $y_{i,k}^\ialpha$ of the vector bundle $(V^{\ialpha}_i)^*$ by
\begin{equation*}
  y_{i,k}^\ialpha \defeq b_i \prod_{\substack{1\le l\le \bv_i\\ l\neq k}} (B_i - w_{i,l})
  C_{\bw_i,i} \cdots C_{\ialpha+1,i}
\end{equation*}
\begin{NB}
  another possible choice: a section of $V^{\ialpha}_i$ by
  \begin{equation*}
    z_{i,k}^\ialpha \defeq C_{\ialpha,i} C_{\ialpha-1,i} \cdots C_{1,i}
    \prod_{\substack{1\le l\le \bv_i\\ l\neq k}} (B'_i - w_{i,l}) a_i
  \end{equation*}
\end{NB}%
\begin{NB}
Note that these make sense even for $\ialpha=0$ as
$y^{0}_{i,k} = b_i\prod_{\substack{1\le l\le \bv_i\\ l\neq k}} (B_i - w_{i,l})
C_{\bw_i,i}\cdots C_{1,i}$.
\end{NB}%
and a rational section $y_i^\ialpha$ of the line bundle
$(\det V^{\ialpha}_i)^*$ defined over
$\Psi^{-1}(\oA^{\underline{\bv}})$ by
\begin{equation}\label{eq:bow4}
  y_i^\ialpha \defeq
  y_{i,1}^\ialpha\wedge y_{i,2}^\ialpha\wedge\cdots
  \wedge y_{i,\bv_i}^\ialpha   \prod_{k > l} (w_{i,k} - w_{i,l})^{-1}.
\end{equation}
Note that this is $\mathfrak S_{\bv_i}$-invariant, as signs from
$y_{i,1}^\ialpha\wedge y_{i,2}^\ialpha\wedge\cdots \wedge y_{i,\bv_i}^\ialpha$ and
$\prod_{k > l} (w_{i,k} - w_{i,l})$ cancel.

\begin{NB3}
  The following paragraph is added.
\end{NB3}%
We also define sections $z^0_{i+1,k}$ ($k=1,\dots,\bv_{i+1}$) of
$V^0_{i+1}$ and $z^0_{i+1}$ of $\det V^0_{i+1}$ by
\begin{equation*}
  \begin{gathered}
  z^0_{i+1,k} \defeq \prod_{\substack{1\le l\le \bv_{i+1}\\ l\neq k}}
  (B_{i+1}' - w_{i+1,l}') a_{i+1},\\
  z^0_{i+1} \defeq z^0_{i+1,1}\wedge z^0_{i+1,2} \wedge \cdots
  \wedge z^0_{i+1,\bv_{i+1}} \prod_{k > l} (w_{i+1,k}' - w_{i+1,l}')^{-1}.
  \end{gathered}
\end{equation*}
We regard $z^0_{i+1}$ as a section of $\det V^{\bw_i}_i$ via \eqref{eq:bow7}.

They are compatible with the factorization as follows. Let
$y_{i,k}^{\prime\ialpha}$ ($1\le k\le\bv'_i$), $y_i^{\prime\ialpha}$,
$y_{i,k}^{\prime\prime\ialpha}$ ($\bv'_i+1\le k\le \bv_i$),
$y_i^{\prime\prime\ialpha}$ be defined for
$\cM_{\vk}(\underline{\bv}',\underline{\bw})$,
$\cM_{\vk}(\underline{\bv}'',\underline{\bw})$ respectively.
As in \cite[Lem.~6.11]{2016arXiv160602002N}, we have
\begin{equation*}
  y_{i,k}^\ialpha =
   \begin{cases}
     y_{i,k}^{\prime\ialpha} \prod_{l=\bv'_i+1}^{\bv_i} (w_{i,k} - w_{i,l})
     & \text{if $1\le k\le \bv'_i$},
     \\
     y_{i,k}^{\prime\prime\ialpha} \prod_{l=1}^{\bv'_i} (w_{i,k} - w_{i,l})
     & \text{if $\bv'_i+1\le k\le \bv_i$},
   \end{cases}
\end{equation*}
and hence
\begin{equation}\label{eq:bow2}
   y_i^\ialpha =
   y_i^{\prime\ialpha} \wedge y_i^{\prime\prime\ialpha}
   \prod_{k=1}^{\bv'_i} \prod_{l=\bv'_i+1}^{\bv_i} (w_{i,k} - w_{i,l}).
\end{equation}
\begin{NB}
  We have
  \begin{equation*}
    y_{i,1}^a\wedge y_{i,2}^a\wedge\cdots \wedge y_{i,\bv_i}^a
    = (-1)^{\bv'_i \bv''_i} y_{i,1}^{\prime\ialpha}\wedge\cdots y_{i,\bv'_i}^{\prime\ialpha}
    \wedge
    y_{i,\bv'_i+1}^{\prime\prime\ialpha}\wedge\cdots y_{i,\bv_i}^{\prime\prime\ialpha}
    \prod_{k=1}^{\bv'_i} \prod_{l=\bv'_i+1}^{\bv_i} (w_{i,k} - w_{i,l})^2.
  \end{equation*}
\end{NB}%
We have a similar formula for $z^0_{i+1}$.

\begin{NB3}
Let $y^\vk$ be a section of $\cL_\vk$ given by
\begin{equation*}
  y^\vk \defeq \prod_{i=0}^{n-1} (y_i^1)^{\vk_{1,i}-\vk_{2,i}}\cdots
  (y_i^\ialpha)^{\vk_{\ialpha,i}-\vk_{\ialpha+1,i}}\cdots
  (y_i^{\bw_i-1})^{\vk_{\bw_i-1,i} - \vk_{\bw_i,i}}
  (y_i^{\bw_i})^{\vk_{\bw_i,i}-\vk_{1,i+1}-\delta_{i+1,0}\vk_*}.
\end{equation*}
(Compare with \ref{eq:bow1}.) It inherits the compatibility with the
factorization from \ref{eq:bow2}.

This is changed as follows, to include the case
$\vk_{\bw_i,i}-\vk_{1,i+1}-\delta_{i+1,0}\vk_* < 0$.
\end{NB3}%

Let $y^\vk$ be a section of $\cL_\vk$ given by
\begin{equation}\label{eq:bow5}
  \begin{split}
  y^\vk \defeq \prod_{i=0}^{n-1} (y_i^1)^{\vk_{1,i}-\vk_{2,i}} &\cdots
  (y_i^\ialpha)^{\vk_{\ialpha,i}-\vk_{\ialpha+1,i}}\cdots
  (y_i^{\bw_i-1})^{\vk_{\bw_i-1,i} - \vk_{\bw_i,i}}\\
  & \times
  \begin{cases}
    (y_i^{\bw_i})^{\vk_{\bw_i,i}-\vk_{1,i+1}-\delta_{i+1,0}\vk_*}
    & \text{if $\vk_{\bw_i,i}-\vk_{1,i+1}-\delta_{i+1,0}\vk_*\ge 0$},
    \\
    (z_{i+1}^0)^{-\vk_{\bw_i,i}+\vk_{1,i+1}+\delta_{i+1,0}\vk_*} &
    \text{otherwise}.
  \end{cases}
  \end{split}
\end{equation}
Note that powers in the first line are nonnegative by the assumption
\eqref{eq:bow3}. The power in the second line is nonnegative by the
definition.

By factorization $\cM_\vk(\underline{\bv},\underline{\bw})$ is
isomorphic to product of bow varieties with
$\dim V^0_i = \dim V^1_i = \cdots = \dim V^{\bw_i}_i = 1$,
$\dim V^\ialpha_j = 0$ ($j\neq i$) over
$\oA^{|\underline{\bv}|}$. Those bow varieties are
\cite[6.5.1]{2016arXiv160602002N} ($n=1$) and \cite[6.5.3]{2016arXiv160602002N}
($n > 1$). In either cases, they are locally isomorphic to
$\CC\times\CC^\times$, as we exclude $w_i^k = 0$.
We also see that $y_i^\ialpha$, $z^0_{i+1}$ are nonvanishing over
$\oA^{|\underline{\bv}|}$, hence $y^\vk$ also.

\begin{NB}
  Alternatively we define $y^\ialpha_{i}$ as follows. We order
  eigenvalues of $B_i$ as $w_{i,1}$, \dots, $w_{i,\bv_i}$ as above. We
  suppose that they are distinct and hence we have factorization to
  the above example with $\dim V_i^{\ialpha} = 1$, $V_j^\ialpha =
  0$. Let ${}'y^\ialpha_{i,k}$ be the section defined for the factor
  corresponding to $w_{i,k}$. (This can be made concrete. Later
  we will identify it with $v^{\bw_i-\ialpha}$ in the local model of
  type $A_{\bw_i-1}$ simple singularity.) Then we define $y^\ialpha_i$ by
  \begin{equation*}
    y^\ialpha_i \defeq {}'y^\ialpha_{i,1}\wedge \cdots\wedge {}' y^{\ialpha}_{i,\bv_i}
    \prod_{k < l} (w_{i,k} - w_{i,l}).
  \end{equation*}
  Note that the isomorphism
  $\det (V^\ialpha_{i,1}\oplus\cdots\oplus V^\ialpha_{i,\bv_i}) \cong
  V^\ialpha_{i,1}\otimes\cdots\otimes V^\ialpha_{i,\bv_i}$ involves the
  sign, hence $\wedge$ is included above.
\end{NB}%

Let us turn to the gauge theory side. We define the flavor symmetry as
follows: We consider the action of
$T(\underline{\bw}) = \prod_i T^{\bw_i}$ on $\bN$ induced from the
standard action of $T^{\bw_i}$ on $\CC^{\bw_i}$. Together with $G$, we
have an action of $(G\times T(\underline{\bw}))/\CC^\times$, where
$\CC^\times$ is embedded in $G\times\prod_i T^{\bw_i}$ as the diagonal
scalars. We have an extra $\CC^\times_{\dil}$ acting on $\bN$ by
scaling on the component $\Hom(\CC^{\bv_{n-1}},\CC^{\bv_0})$.
Let
$\tilde G = \CC^\times_{\dil}\times (G\times
T(\underline{\bw}))/\CC^\times$,
$G_F = \tilde G/G = \CC^\times_{\dil}\times
T(\underline{\bw})/\CC^\times$. Then
$
\begin{NB}
i_{\vk}^!\scAfor =   
\end{NB}%
H^{G_\cO}_*(\tilde\cR^{\vk})$ is a module over
$
\begin{NB}
i_0^!\scAfor =   
\end{NB}%
H^{G_\cO}_*(\cR) = \CC[\cM_C(G,\bN)]$ by the
construction in \ref{subsec:flav-symm-group2}.
\begin{NB}
  \ref{subsec:affG_flavor}, \ref{subsec:line-bundles-via}.
\end{NB}%
Here $\tilde\pi\colon\tilde\cR = \cR_{\tilde G,\bN}\to \Gr_{G_F}$ and
$\tilde\cR^\vk = \tilde\pi^{-1}(\vk)$ as before, and
$\vk = (\vk_*,\vk_{\ialpha,i})$
\begin{NB}
  Is it \emph{not} fine to denote this by $\lambda$, as $\lambda$ is
  usually a dominant weight $\sum_i \dim W_i\omega_i$ for a quiver
  gauge theory.
\end{NB}%
is a coweight of $G_F$, regarded as a point in $\Gr_{G_F}$.
We can also consider
$\cM_C^\vk(G,\bN) = \Proj(\bigoplus_{n\ge 0} H^{G_\cO}_*(\tilde\cR^{n\vk}))$,
which is endowed with a projective morphism
$\cM_C^\vk(G,\bN)\to \cM_C(G,\bN)$.
Let us use the standard basis of $\CC^{\bv_i}$ to take a maximal torus
$T$ of $G$ consisting of diagonal matrices.
We identify $\BA^{\underline{\bv}}$ with the spectrum of
$H^*_{G}(\mathrm{pt}) = H^*_T(\mathrm{pt})^{\prod\mathfrak
  S_{\bv_i}}$. We have
$\varpi\colon \cM_C(G,\bN) \to \BA^{\underline{\bv}}$ given by the
structural homomorphism $H^*_{G}(\mathrm{pt}) \to H^{G_\cO}_*(\cR)$
when $\vk = 0$.
We compose $\cM_C^\vk(G,\bN)\to \cM_C(G,\bN)$ with $\varpi$ to apply
\ref{prop:flat} to $\cM_C^\vk(G,\bN)$ later.

Let $\bNT$ denote $\bN$ regarded as a $T$-module. We have the
pushforward homomorphism
$\iota_*\colon H^{T_\cO}_*(\cR_{T,\bNT})\to H^{T_\cO}_*(\cR_{G,\bN}) =
H^{G_\cO}_*(\cR_{G,\bN})\otimes_{H^*_G(\mathrm{pt})}
H^*_T(\mathrm{pt})$ of the inclusion $\cR_{T,\bNT}\to \cR_{G,\bN}$
(see \ref{sec:bimodule}).
We put the flavor symmetry as above for $T$, i.e.,
$\tilde T \defeq \CC^\times_{\dil}\times (T\times
T(\bw))/\CC^\times$. We have
$\tilde T/T = \CC^\times_{\dil}\times T(\bw)/\CC^\times =
G_F$. We consider $\tilde\pi_T\colon \cR_{\tilde T,\bNT}\to \Gr_{G_F}$
as above, and $\tilde\pi_T^{-1}(\vk)$. We have a natural inclusion
$\tilde\pi_T^{-1}(\vk)\to \tilde\cR^{\vk} = \tilde\pi^{-1}(\vk)$, denoted again by
$\iota$, and the pushforward homomorphism
\[
  \iota_*\colon H^{T_\cO}_*(\tilde\pi_T^{-1}(\vk))\to
  H^{G_\cO}_*(\tilde\cR^{\vk})\otimes_{H^*_G(\mathrm{pt})} H^T_*(\mathrm{pt}).
\]
Let $\pi_T\colon \cR_{\tilde T,\bNT}\to \Gr_{\tilde T}$ be the
projection.

\begin{NB3}
Let $\pi_T\colon \cR_{\tilde T,\bNT}\to \Gr_{\tilde T}$ be the
projection. We lift the coweight $\vk = (\vk_*,\vk_{\ialpha,i})$ of
$G_F$ to $\tilde T$ by setting the $T$-component as
$(\underbrace{\vk_{1,i},\dots,\vk_{1,i}}_{\text{$\bv_i$
    times}})_{i=0}^{n-1}$.
\begin{NB}
  The shift $\vk_{\ialpha,i} + s$ ($s\in\ZZ$) changes $\vk_{1,i}$ also to
  $\vk_{1,i}+s$. Therefore this is well-defined.
\end{NB}%
Let us denote it by $\vk^0$. We consider the fundamental class of
$\pi_T^{-1}(\vk^0)$ and denote it by $\sy^\vk$. This is an analog of
$\sy_{i,k}$ considered in \cite[\S6.8.1]{2016arXiv160602002N}.
By the localization theorem, it is nonvanishing over
$\oA^{|\underline{\bv}|}$.

The following paragraph is changed so that the case 
$\vk_{\bw_i,i}-\vk_{1,i+1}-\delta_{i+1,0}\vk_* < 0$
is included.
\end{NB3}%

Next we introduce a class
$\sy^\vk\in H^{T_\cO}_*(\tilde\pi_T^{-1}(\vk))$, whose image under
$\iota_*$ will be identified with $y^\vk$.
We begin with $\sy_i^\alpha$, $\sz_{i+1}^0$, which will be identified
with $y_i^\alpha$, $z_{i+1}^0$ respectively.
For $\sy_i^\ialpha$ we choose $\vk$ so that
\begin{equation}\label{eq:bow6}
  \begin{minipage}{.8\linewidth}
    the corresponding component $-\vk_{\ialpha,i}+\vk_{\ialpha+1,i}$
    (or $-\vk_{\bw_i,i}+\vk_{1,i+1}+\delta_{i+1,0}\vk_*$ if
    $\ialpha = \bw_i$) is $-1$, and all others appearing in powers of
    \eqref{eq:bow1} are zero.
  \end{minipage}
\end{equation}
\begin{NB}
In other words, the $T(\underline{\bw})$-component is
$(1,1,\dots,1,0,\dots,0)$, and the $\CC^\times_{\dil}$-component is $-1$.
\end{NB}%
For $\sz_i^0$ we choose $\vk$ so that
$-\vk_{\bw_i,i}+\vk_{1,i+1}+\delta_{i+1,0}\vk_*$ is $1$ instead.
A choice of $\vk$ is unique up to overall shifts of
$T(\underline{\bw})$-component.
We lift this coweight $\vk$ of $G_F$ to $\tilde T$ by setting the
$T$-component as
\begin{equation}
  \label{eq:bow8}
  (\underbrace{\vk_{1,i},\dots,\vk_{1,i}}_{\text{$\bv_i$
    times}})_{i=0}^{n-1}.
\end{equation}
\begin{NB}
  The shift $\vk_{\ialpha,i} + s$ ($s\in\ZZ$) changes $\vk_{1,i}$ also to
  $\vk_{1,i}+s$. Therefore this is well-defined.
\end{NB}%
Let us denote the lift by $\tilde\vk$.
\begin{NB3}
  Corrected on June 20.
\end{NB3}%
We define $\sy_i^\alpha$ and $\sz_i^0$ as the fundamental class of
$\pi_T^{-1}(\tilde\vk)$ according to the choice of $\tilde\vk$.
This is an analog of $\sy_{i,k}$ considered in
\cite[\S6.8.1]{2016arXiv160602002N}.
By the localization theorem, it is nonvanishing over
$\oA^{|\underline{\bv}|}$.
For general $\vk$ we define $\sy^\vk$ as the product in
\eqref{eq:bow5} with $y_i^\alpha$, $z_{i+1}^0$ replaced by
$\sy_i^\alpha$, $\sz_{i+1}^0$.

We define a rational isomorphism
$\theta\colon
\Gamma(\cM_0(\underline{\bv},\underline{\bw}),\boldsymbol\pi_*(\cL_{\vk}))
\dasharrow H^{G_\cO}_*(\tilde\pi^{-1}({\vk}))$ by sending $y^\vk$ to
$\iota_* \sy^\vk$. It is $\prod \mathfrak S_{\bv_i}$-equivariant,
hence it is indeed an isomorphism as above.

\begin{NB}
  It seems that we will implicitly use that $\theta$ sends
  $y_i^\ialpha$ to the corresponding section in the Coulomb branch side
  defined similar to $\iota_*\sy^\vk$ by replacing $\vk$ by
  $\omega_{\ialpha,i}$, the $\ialpha$-th fundamental coweight of $\GL(\bw_i)$.
\end{NB}%

\begin{NB3}
Now the assumption is weakened.

We assume
\begin{equation}\label{eq:bow3'}
    \vk_{1,0}\ge \vk_{2,0}\ge\dots\ge \vk_{\bw_0,0}\ge \vk_{1,1} \ge \dots \ge
    \vk_{\bw_1,1} \ge \dots \ge \vk_{\bw_{n-1},n-1} \ge \vk_{1,0} + \vk_*.
\end{equation}
In particular, all powers appearing in \eqref{eq:bow1} are nonpositive.
\end{NB3}%

\begin{Theorem}\label{thm:lbowisom}
  Under the assumption \eqref{eq:bow3}
  $\theta$ extends to an isomorphism
  $\Gamma(\cM_0(\underline{\bv},\underline{\bw}),
  \boldsymbol\pi_*(\cL_{\vk}))\xrightarrow{\sim}
  H^{G_\cO}_*(\tilde\cR^{\vk})$ of
  $\CC[\cM_0(\underline{\bv},\underline{\bw})]= H^{G_\cO}_*(\cR)$-modules.
\end{Theorem}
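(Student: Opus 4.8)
The plan is to follow the standard scheme used in the proofs of \ref{det via hom} and \ref{Coulomb_quivar}. By construction $\theta$ is a $\prod_i\mathfrak S_{\bv_i}$-equivariant rational isomorphism of $\CC[\cM_0(\underline{\bv},\underline{\bw})]$-modules, regular over $\Psi^{-1}(\oA^{\underline{\bv}})$. Pushing both $\Gamma(\cM_0(\underline{\bv},\underline{\bw}),\boldsymbol\pi_*\cL_{\vk})$ and $H^{G_\cO}_*(\tilde\cR^{\vk})$ forward to $\BA^{\underline{\bv}}$ (via $\varpi$, resp.\ $\Psi=\varpi\circ\boldsymbol\pi$) and base changing along $\BA^{\underline{\bv}}\leftarrow\BA^{|\underline{\bv}|}$, one obtains coherent sheaves to which the extension criterion \ref{prop:flat}--\ref{rem:conditions} is meant to apply: on the bow-variety side this uses the normality of $\cM_{\vk}(\underline{\bv},\underline{\bw})$ (\cite[Th.~6.15]{2016arXiv160602002N}) and the equidimensionality of the fibers of $\Psi$ (\cite[Prop.~6.13]{2016arXiv160602002N}), and it is here that \eqref{eq:bow3} is needed, so that all exponents in \eqref{eq:bow1} are nonpositive, i.e.\ $\cL_{\vk}$ lies in the dominant chamber and $\boldsymbol\pi_*\cL_{\vk}$ is the expected ($j_*$-type) torsion-free extension; on the Coulomb side the same conditions hold automatically by \ref{prop:flat}. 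The upshot is that it suffices to check that $\theta$ extends to an isomorphism at the general point of each irreducible divisor of $\BA^{|\underline{\bv}|}\setminus\oA^{|\underline{\bv}|}$, which up to $\prod_i\mathfrak S_{\bv_i}$ are of three kinds: (i) $w_{i,k}=w_{i,l}$; (ii) $w_{i,k}=w_{i+1,l}$; (iii) $w_{i,k}=0$ for an $i$ with $\bw_i\neq0$.

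At a general point of such a divisor I would localize and use the factorization property \cite[Th.~6.9]{2016arXiv160602002N} to split off the colliding block, together with the compatibility of $\cL_{\vk}$ with factorization and the factorization formula \eqref{eq:bow2} for $y^{\vk}$, reducing to a bow variety in which only a single node (or a pair of adjacent nodes) carries a one- or two-dimensional $V$; these are the local models \cite[6.5.1]{2016arXiv160602002N} and \cite[6.5.3]{2016arXiv160602002N}. On the Coulomb side the matching factorization of $H^{G_\cO}_*(\tilde\cR^{\vk})$ and of the class $\iota_*\sy^{\vk}$ is produced by the localization theorem — the analog of \cite[Lem.~6.11]{2016arXiv160602002N} and of the reductions in \ref{factorization2}. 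Cases (i) and (ii) are then the bow-variety analogs of the steps in the proof of \ref{det via hom}: the diagonal divisor is handled as \ref{Hilbert S0}, and the node-adjacent divisor by the rank-$(1,1)$ two-node computation; in both, after the further localization at the remaining root hyperplanes the two sheaves become free of rank one and $\theta$ extends once the $\CC^\times\times\CC^\times$-bigradings are matched, the sections $y^{\vk}$ and $\iota_*\sy^{\vk}$ being regular and nonvanishing by \eqref{eq:bow4}, \eqref{eq:bow2} and the localization theorem. Case (iii) is the genuinely new base case: the local model is the $n=1$, rank-one bow variety with GIT quotients $\cS_{\bw_i}$ and $\wit\cS_{\bw_i}$, and under \eqref{eq:bow3} the exponents $\vk_{\ialpha,i}-\vk_{\ialpha+1,i}$ are nonnegative, so $\cL_{\vk}$ corresponds, in the notation of \ref{sec:mult}, to $\cL_\lambda$ for a dominant weight $\lambda$ of $\SL(\bw_i)$ and $y^{\vk}=\prod_\ialpha(y_i^\ialpha)^{\vk_{\ialpha,i}-\vk_{\ialpha+1,i}}$ to the corresponding product of fundamental sections; here the extension and bijectivity of $\theta$ are exactly \ref{sec:mult}, i.e.\ \ref{tensor surj}, \ref{tensor Klein} and the explicit normalization \ref{rem:ambiguity}, which together identify $y^{\vk}$ with $\iota_*\sy^{\vk}$. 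A final application of \ref{prop:flat} and \ref{rem:conditions} finishes the proof.

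The step I expect to be the main obstacle is the compatibility bookkeeping underlying case (iii). One must check that the rational identification $\theta$ --- assembled from $\sy^{\vk}\mapsto$ fundamental class and from the factorization trivializations \eqref{eq:bow4}, \eqref{eq:bow2} of $y^{\vk}$ --- restricts, on each rank-one factor, to the \emph{same} normalized isomorphism $\Gamma(\cS_{\bw_i},\cF_{\omega_\ialpha})\xrightarrow{\sim}H^{G_\cO}_*(\tilde\cR^{\omega_\ialpha})$ fixed in \ref{rem:ambiguity}, rather than a differently scaled one; equivalently, that the bow-side and Coulomb-side factorization isomorphisms are mutually compatible with these normalizations. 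This is the same point flagged in the proof of \ref{det via hom} concerning the identification of $\bz^*\iota_*^{-1}$ with the $\theta$ of \ref{Hilbert S0}. Once it is settled, the remaining codimension-one verifications are routine and the extension criterion does the rest.
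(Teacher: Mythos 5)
Your proposal is correct and follows essentially the same route as the paper: the same three boundary divisor types, reduction by localization/factorization to the local models of \cite[\S6.5]{2016arXiv160602002N}, the reduction of the $w_{i,k}=0$ case to the Kleinian surface results of \ref{sec:mult} with the normalization of \ref{rem:ambiguity}, and the final appeal to \ref{prop:flat} and \ref{rem:conditions}. The only cosmetic discrepancy is that the same-node collision reduces to \ref{Hilbert S0} only when $n=1$ (for $n\ge2$ the paper checks directly that $y^\ialpha=b\wedge bB$ and $\sy^{\vk}$ are both nonvanishing there), and that \eqref{eq:bow3} is also needed in the adjacent-node case to make the exponent $\vk_{1,i-1}-\vk_{1,i}-\delta_{i0}\vk_*$ nonnegative; neither point changes the argument.
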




\begin{proof}
As in the proofs of \ref{Coulomb_quivar}, \cite[Th.~6.18]{2016arXiv160602002N},
we need to study how the Coulomb branch and the bow variety look like
around the general points $t$ of the boundary divisor in
$\BA^{|\underline{\bv}|}$. In our case,
\begin{aenume}
      \item $w_{i-1,k}(t) = w_{i,l}(t)$ for some $i$, $k$, $l$, but all
    others are distinct. Moreover $w_{j,r}(t)\neq 0$ if $\bw_j\neq
    0$. (We understand $i\neq i-1$, hence $n \ge 2$.)
  \item $w_{i,k}(t) = w_{i,l}(t)$ for distinct $k$, $l$ and some $i$,
    but all others are distinct. Moreover $w_{j,r}(t)\neq 0$ if
    $\bw_j\neq 0$.
  \item All pairs like in (a),(b) are distinct, but $w_{i,k}(t) = 0$
    for $i$ with $\bw_i\neq 0$.
\end{aenume}
See the proof of \cite[Th.~6.18]{2016arXiv160602002N}. The gauge theory
$(G,\bN,\tilde G)$ with the flavor symmetry group $\tilde G$ is
replaced by $(Z_G(t),\bN^t,Z_{\tilde G}(t))$. In our case,
$Z_{\tilde G}(t) = \CC^\times_{\dil}\times(Z_G(t)\times
T(\underline{\bw}))/\CC^\times$, and
\(
    (Z_G(t),\bN^t) = (\GL(\underline{\bv}') \times
    T^{|\underline{\bv}''|}, \bN(\underline{\bv}',\underline{\bw}')),
\)
where $\underline{\bv}'$, $\underline{\bw}'$ are given below,
$\underline{\bv}'' = \underline{\bv} - \underline{\bv}'$ and
$T^{|\underline{\bv}''|}$ acts trivially on
$\bN(\underline{\bv}',\underline{\bw}')$:
\begin{aenume}
\item $\underline{\bw}' = 0$, $\bv_i' = 1 = \bv_{i-1}'$ and other
  entries are $0$.
  \item $\underline{\bw}' = 0$, $\bv_i' = 2$ and other
    entries are $0$.
    \item $\bv'_i = 1$, $\bw'_i = \bw_i$ and other entries are $0$.
\end{aenume}
The extra factor $T(\underline{\bw})$ acts trivially in (a),(b), while
it acts through $T(\underline{\bw}) \to T^{\bw_i}$ in (c).
On the other hand $\CC^\times_\dil$ acts trivially in (b),(c) and (a)
with $i\neq 0$.

By the same argument as in the proofs of \ref{Coulomb_quivar},
\cite[Th.~6.18]{2016arXiv160602002N}
both $y^\vk$ and $\sy^\vk$ are related to $y^{\prime\vk}$,
$\sy^{\prime\vk}$ by nonvanishing regular functions defined on a
neighborhood of $t$ in $\BA^{|\bv|}$ under the factorization.
Therefore it is enough to check that the isomorphism $\theta$ extends
for the local models (a),(b),(c) above.

Consider the case (a) with $n\ge 3$.
%
Let us consider the local model for the bow variety side. It is
\cite[6.5.6]{2016arXiv160602002N}:
\begin{NB}
\begin{align*}
    \xymatrix@C=1.2em{
      &\CC \ar@(ur,ul)_{w_{i-1}}\ar@<-.5ex>[rrrr]_{C_{\bw_{i-1},i-1} \cdots C_{1,i-1}} &&&& \CC \ar@(ur,ul)_{w_{i-1}} \ar[rr]^{A} \ar[dr]_{b_{i-1}} \ar@<-.5ex>[llll]_{D_{1,i-1}\cdots D_{\bw_{i-1},i-1}}
      && \CC \ar@(ur,ul)_{w_{i}} \ar@<-.5ex>[rrrr]_{C_{\bw_{i},i} \cdots C_{1,i}} &&&& \CC \ar@(ur,ul)_{w_{i}}\ar[dr]_{b_{i}} \ar@<-.5ex>[llll]_{D_{1,i}\cdots D_{\bw_{i},i}}
      &
  \\
\CC \ar[ur]_{a_{i-1}}  &&&&&& \CC \ar[ur]_{a_{i}} &&&&&& \CC
      }
\end{align*}
\end{NB}%
\begin{align*}
    \xymatrix@C=1.2em{
      &\CC \ar@(ur,ul)_{w_{i-1}} \ar[rr]^{A} \ar[dr]_{b_{i-1}}
      && \CC \ar@(ur,ul)_{w_{i}} \ar[dr]_{b_{i}}
      &
  \\
\CC \ar[ur]^{a_{i-1}}  && \CC \ar[ur]_{a_{i}} && \CC.
      }
\end{align*}
Since we assume $w_{i-1}$, $w_i\neq 0$, the relevant $C_{\ialpha,i-1}$,
$D_{\ialpha,i-1}$, $C_{\ibeta,i}$, $D_{\ibeta,i}$
($\ialpha=1,\dots,\bw_{i-1}$, $\ibeta=1,\dots,\bw_{i})$ are
isomorphisms, hence can be normalized by the group action and defining
equations. Thus they are omitted. It is also clear that the
$\vk$-stability condition is automatically satisfied, hence
$\cM_\vk(\underline{\bv}',\underline{\bw}')\cong
\cM_0(\underline{\bv}',\underline{\bw}')$.

We normalize $a_{i-1} = 1$, $b_i = 1$ thanks to the conditions
(S1),(S2). The defining equation is $(w_i - w_{i-1})A = a_i
b_{i-1}$. On the other hand, we have introduced functions $y_{i-1}$,
$y_i$, $y_{i-1,i}$ in \cite[6.5.6]{2016arXiv160602002N}, which are
$y_{i-1} = b_{i-1} a_{i-1} = b_{i-1}$, $y_i = b_i a_i = a_i$, $y_{i-1,i} =
A$. (We change $y_{i-1,i}$ in \cite{2016arXiv160602002N} by its inverse.)
The variety $\cM_0(\underline{\bv}',\underline{\bw}')$ is
$\{ (w_{i-1}, w_i, y_{i-1},y_i, y_{i-1,i}^{\pm 1}) \mid y_{i-1} y_i =
y_{i-1,i}(w_{i} - w_{i-1})\}$.
\begin{NB}
  It seems that there is a mistake on the sign in \cite{2016arXiv160602002N}.
\end{NB}%
In this case, line bundles $\det V_{i-1}^\ialpha$, $(\det V_i^\ibeta)^*$ are
trivialized by their nonvanishing sections
$C_{\ialpha,i-1}\cdots C_{1,i-1} a_{i-1} = y_{i-1}/y_{i-1}^\ialpha$,
$b_i C_{\bw_i,i}\cdots C_{\ibeta+1,i} = y_i^\ibeta$,
and sections $y_{i-1}^\ialpha$, $z_i^0$, $y_i^\ibeta$, $z_{i+1}^0$ are identified with
$y_{i-1}$, $y_i$, $1$, $1$ respectively.
Therefore
\begin{NB3}
\begin{equation*}
  y^\vk = y_{i-1}^{\vk_{1,i-1}-\vk_{1,i}-\delta_{i,0}\vk_*}.
\end{equation*}
\end{NB3}%
\begin{equation*}
  y^\vk =
  \begin{cases}
    y_{i-1}^{\vk_{1,i-1}-\vk_{1,i}-\delta_{i,0}\vk_*} &
    \text{if $\vk_{\bw_{i-1},i-1}-\vk_{1,i}-\delta_{i,0}\vk_*\ge 0$},\\
    y_{i-1}^{\vk_{1,i-1}-\vk_{\bw_{i-1},i-1}}
    y_i^{-\vk_{\bw_{i-1},i-1}+\vk_{1,i}+\delta_{i,0}\vk_*}
    & \text{otherwise}.
  \end{cases}
\end{equation*}

\begin{NB}
  The following argument is \emph{incomplete}, as we do not identify
  the above trivialization of line bundles in the Coulomb branch side.

  July 20:
  I change my mind. We can trivialize line bundles in both sides
  independently. Then we express sections as functions in both sides,
  and check that they are the same function. If both of
  trivializations are defined everywhere, the isomorphism extends.
\end{NB}%

Next let us consider the local model in the Coulomb branch side. The
group $T(\bw)$ acts trivially on
$\bN(\underline{\bv}',\underline{\bw}')$. The extra
$\CC^\times_\dil$-action appears when $i=0$, but it can be absorbed to
the $\GL(\bv_{i-1})$-action, as we assume $n \ge 3$.
We take an isomorphism $Z_{\tilde G}(t)
\begin{NB}
  = \CC_\dil^\times\times (\CC^\times\times\CC^\times\times T^{|\underline{\bv}''|}\times T(\bw))/\CC^\times
\end{NB}%
\cong \CC^\times\times\CC^\times\times T^{|\underline{\bv}''|}\times G_F$,
then
$H^{G_\cO}_*(\tilde\cR^{\vk}) \cong H^{G_\cO}_*(\cR)$. It means
that the line bundle is trivialized.
Then
\begin{NB3}
$\sy^\vk$  
\end{NB3}%
$\sy_{i-1}^\alpha$, $\sz_i^0$, $\sy_i^\beta$ or $\sz_{i+1}^0$
is the fundamental class of the fiber over the coweight
$(\vk_{1,i-1} - \delta_{i0}\vk_*, \vk_{1,i})$ of
$\GL(\bv_{i-1})\times \GL(\bv_i)$ according to a suitable
choice of $\vk$ as in \eqref{eq:bow6}.
(The ambiguity of shifts does not matter, as it only gives an
invertible function.) Now recall $y_{i-1}$, $y_i$, $y_{i-1,i}$ are
fundamental classes of fibers over $(1,0)$, $(0,1)$, $(1,1)$
respectively under
$\cM_C(\underline{\bv}',\underline{\bw}')\cong \{ y_{i-1}y_i =
y_{i-1,i}(w_i-w_{i-1})\}$ by \ref{abel}.
\begin{NB}
Recall again $y_{i-1,i}$ is invertible.
\end{NB}%
\begin{NB3}
Since $\vk_{1,i-1} - \vk_{1,i} - \delta_{i0}\vk_*\ge 0$ by our
assumption \eqref{eq:bow3}, $\sy^\vk$ is equal to
$y_{i-1}^{\vk_{1,i-1} - \vk_{1,i} - \delta_{i0}\vk_*}$ up to an
invertible function.
\begin{NB}
  Note that $y_{i-1,i}$ is invertible, hence we can strip $\pm (1,1)$
  from coweights.
\end{NB}%
Thus both $y^\vk$ and $\sy^\vk$ are identified with
$y_{i-1}^{\vk_{1,i-1}-\vk_{1,i}-\delta_{i,0}\vk_*}$ up to an
invertible function, and the isomorphism of line bundles extends over
$\cM_0(\underline{\bv}',\underline{\bw}')$.
\end{NB3}%
Thus $\sy_{i-1}^\alpha$, $\sz_i^0$, $\sy_i^\beta$, $\sz_{i+1}^0$ are
equal to $y_{i-1}$, $y_i$, $1$, $1$ up to invertible functions
respectively.
Since both $y^\vk$ and $\sy^\vk$ are defined as products, they are
equal up to an invertible function. Therefore the isomorphism of line
bundles extends over $\cM_0(\underline{\bv}',\underline{\bw}')$.

\begin{NB}
  The proof of the next case is \emph{not} complete. But it
  should be an easier case.

  It is completed on May 11.
\end{NB}%

For (a) with $n=2$, the gauge theory side is reduced to the case
$(\GL(\underline{\bv}'),\bN(\underline{\bv}',0)) =
(\CC^\times\times\CC^\times, \CC\oplus\CC)$
with the $\CC^\times\times\CC^\times$-action
$(t_0,t_1)\cdot (x,y) = (t_1 t_0^{-1}x, t_0 t_1^{-1} y)$
\begin{NB}
  ($x$ is in $\Hom(V_0,V_1)$, $y$ is in $\Hom(V_1,V_0)$)
\end{NB}%
and the flavor group $G_F$ remains only as the
$\CC^\times_\dil$-action by $t_*\cdot (x,y) = (x, t_*y)$ for
$t_*\in\CC^\times_\dil$. Since the diagonal subgroup
$\CC^\times\subset \CC^\times\times\CC^\times$ acts trivially on
$\CC\oplus\CC$, the action factors through the quotient
$\CC^\times\times\CC^\times\to \CC^\times; (t_0,t_1)\mapsto t_0
t_1^{-1}$. The Coulomb branch has the corresponding factor
$\CC\times\CC^\times = \cM_C(\CC^\times,0)$.
We can change the second summand $\CC$ of $\CC\oplus\CC$ by its dual
thanks to \ref{subsec:change_dual}. Hence we are reduced to the
situation in \ref{Klein via} with $V = \CC$, $W = \CC^2$. In
particular,
$\cM_C(\GL(\underline{\bv}'),\bN(\underline{\bv}',0)) =
\CC\times\CC^\times\times \cM_C(\CC^\times,\CC^2) =
\CC\times\CC^\times\times\cS_2$, and the corresponding
$\cM_C^\vk(\GL(\underline{\bv}'),\bN(\underline{\bv}',0))
\begin{NB}
  =
\Proj(\bigoplus_{n\ge 0}
H^{\GL(\underline{\bv}')_{\cO}}_*(\tilde\cR^{n\vk}))  
\end{NB}%
$
is $\CC\times\CC^\times\times T^*\proj^1$.
\begin{NB3}
The section $\sy^\vk$ is the fundamental class of the fiber over
$(\vk_{1,0} - \vk_{1,1}, -\vk_*,0)\in \Gr_{\tilde G}$ if we identify
$\Gr_{\tilde G}$ with the coweight lattice of
$\tilde G = \CC^\times\times\CC^\times\times\CC^\times/\CC^\times$, and also
with $\ZZ^3/\ZZ$. Note that $-\vk_*\ge 0$ by \eqref{eq:bow3}.
Noticing $\vk_{1,0} - \vk_{1,1}\le -\vk_*$ also by \eqref{eq:bow3}, we
find that $\sy^\vk$ is the product $(\sy')^{\vk_{1,0}-\vk_{1,1}}(\sy'')^{\vk_{1,1}-\vk_{1,0}-\vk_*}$ where $\sy'$ (resp.\
$\sy''$) is the fundamental class of the fiber over
$(1,1,0)$ (resp.\ $(0,1,0)$).
\begin{NB}
  As
  $(\vk_{1,0}-\vk_{1,1},-\vk_*,0) = (\vk_{1,0} - \vk_{1,1}, \vk_{1,0}
  - \vk_{1,1}, 0) + (0,\vk_{1,1}-\vk_{1,0}-\vk_*,0)$, and two terms
  have the same sign on each factor of $\CC\oplus \CC$.
\end{NB}%
\end{NB3}%
According to the choice of $\vk$ as in \eqref{eq:bow6}, the section
$\sy_0^\alpha$, $\sy_1^\beta$, $\sz_0^0$ or $\sz_1^0$ is the
fundamental class of fiber over
$(\vk_{1,0}-\vk_{1,1},-\vk_*,0)\in \Gr_{\tilde G}$ if we identify
$\Gr_{\tilde G}$ with the coweight lattice of
$\tilde G = \CC^\times\times\CC^\times\times\CC^\times/\CC^\times$,
and also with $\ZZ^3/\ZZ$. Concretely
$(\vk_{1,0}-\vk_{1,1},-\vk_*,0)$ is $(1,1,0)$, $(0,1,0)$, $(-1,-1,0)$,
$(0,-1,0)$ for $\sy_0^\alpha$, $\sy_1^\beta$, $\sz_1^0$, $\sz_0^0$
respectively.
\begin{NB3}
  See the note \verb+2020-06-19-Note-11-59.xopp+.
\end{NB3}

On the other hand, the local model of the bow variety is given in
\cite[6.5.4]{2016arXiv160602002N} with $\bw_1 = \bw_2 = 0$. Since
$A_0$ is an isomorphism by the conditions (S1),(S2), we can normalize
it to $1$. Then we can factor out $(w_1,A_1)\in\CC\times\CC^\times$,
and the remaining factor is $\cS_2$ and its resolution
$T^*\proj^1$. Line bundles are given by characters of $\CC^\times$
acting on $\CC$ on the right side:
\begin{align*}
  \xymatrix@C=1.2em{
  &&&&&&&&&&&
  \\
  & \CC \ar@(ur,ul)_{w_0} \ar[rr]^{A_0} \ar[dr]_{b_0}
    \ar@{-} @<-2pt> `l[lu] `[ur] `[1,5] `_l[0,4] [0,4]
    \ar@{-} @<+2pt> `l[lu] `[ur] `[1,5]^{\id} `_l[0,4] [0,4]
    && \CC \ar@(ur,ul)_{w_1} \ar[rr]^{A_1} \ar[dr]_{b_1} && \CC &
          && \cong && \CC \ar@(ul,l)_{w_1} \ar@(ur,r)^{A_1}
                      \ar@<-2pt>[dl]_{b_0 A_1} \ar@<-2pt>[dr]_{b_1}&
  \\
  && \CC \ar[ur]_{a_1} && \CC \ar[ur]_{a_0} &&
  &&&\CC\ar@<-2pt>[ur]_{a_1} && \CC. \ar@<-2pt>[ul]_{a_0}        }
\end{align*}
Moreover $y_0^\ialpha$ (resp.\ $y_1^\ibeta$) is identified with
$b_0 A_1$ (resp.\ $b_1$).
\begin{NB3}
Hence $y^\vk
\begin{NB}
= y_0^{\vk_{1,0} - \vk_{1,1}} y_1^{\vk_{1,1} - \vk_{1,0} - \vk_*}
\end{NB}%
= (b_0 A_1)^{\vk_{1,0} - \vk_{1,1}} b_1^{\vk_{1,1} - \vk_{1,0} - \vk_*}.
$
\begin{NB}
  Recall we assume $\vk_{1,0}\ge\vk_{1,1}\ge\vk_{1,0}+\vk_*$.
\end{NB}%
\end{NB3}%
Since we identify the fundamental class of fiber over $(1,1,0)$
(resp.\ $(0,1,0)$) with $b_0A_1$ (resp.\ $b_1$) as in the end of
\ref{Klein via}, we conclude that the isomorphism of line bundles
extends.
For $\vk_*=1$, corresponding to the case $\sz_0^0$ or $\sz_1^0$, we
need to use the opposite stability condition, and $\proj^1$ is
replaced by the dual $\proj^1$.
We replace linear maps above by its transpose to apply \ref{Klein
  via}. Then the fundamental class for $(-1,-1,0)$ (resp.\ $(0,-1,0)$)
are identified with $a_1$ (resp.\ $a_0$), and hence $z_1^0$ (resp.\ $z_0^0$).
Therefore the assertion is true also in this case.
It is also true for general $\vk$ thanks to \ref{tensor Klein}.

Next consider the case (b). First suppose $n\ge 2$. The local model
for the bow variety is \cite[6.5.5]{2016arXiv160602002N}:
\begin{align*}
    \xymatrix@C=1.2em{
      &\CC^2 \ar@(ur,ul)_{B} \ar[dr]^{b}
      &
  \\
\CC \ar[ur]^{a}  && \CC,
                    }
                    \begin{NB}
                          \xymatrix@C=1.2em{
      & \CC^2 \ar@(ur,ul)_{B'} \ar@<-.5ex>[rr]_{C_{\bw \cdots 1}}
      && \CC^2 \ar@(ur,ul)_{B} \ar@<-.5ex>[ll]_{D_{1\cdots \bw}} \ar[dr]_b&
  \\
\CC \ar[ur]_a && && \CC
      }
                    \end{NB}%
\end{align*}
where we drop subscripts $i$. Linear maps $C_{\ialpha,i}$,
$D_{\ialpha,i}$ ($\ialpha=1,\dots,\bw_i$) are isomorphisms thanks to the
assumption that eigenvalues of $B$ are nonzero. Therefore they are
normalized by the group action and defining equations, and omitted.
We have
$\cM_\vk(\underline{\bv}',\underline{\bw}')\cong
\cM_0(\underline{\bv}',\underline{\bw}')$ as before.

Let $w_1$, $w_2$ be eigenvalues of $B$. Then
$\CC[\cM_0(\underline{\bv}',\underline{\bw}')\times_{\BA^{\underline{2}}}\BA^2]$
is
$\CC[w_1,w_2,{}'y_1^\pm,{}'y_2^\pm,\xi]/({}'y_1 - {}'y_2 = \xi(w_1 -
w_2))$ where ${}'y_1 = b(B-w_2)a$, ${}'y_2 = b(B-w_1)a$, $\xi = ba$.
Thanks to the conditions (S1),(S2) we trivialize the dual of the
vector bundle associated with $V = \CC^2$ by a frame $\{ b, bB \}$.
The factorization morphism is given by
\begin{equation*}
    \xymatrix@C=1.2em{
      &\CC \ar@(ur,ul)_{w_1} \ar[dr]^{b_1}
      &
  \\
\CC \ar[ur]^{a_1}  && \CC
}
    \xymatrix@C=1.2em{
      &\CC \ar@(ur,ul)_{w_2} \ar[dr]^{b_2}
      &
  \\
  \CC \ar[ur]^{a_2}  && \CC}
\longmapsto
    \xymatrix@C=1.2em{
      &\CC^2 \ar@(ur,ul)_{B = \left[
          \begin{smallmatrix}
            w_1 & 0 \\ 0 & w_2
          \end{smallmatrix}\right]} \ar[dr]^{b = \left[\begin{smallmatrix}
            b_1 & b_2
          \end{smallmatrix}\right]}
      &
  \\
  \CC \ar[ur]^{a = \left[\begin{smallmatrix}
        a_1 \\ a_2
          \end{smallmatrix}\right]}  && \CC}.
\end{equation*}
Hence the trivialization $b\wedge bB$ of $\det V^*$ is $b_1 b_2 (w_1 - w_2)$
over the open subset $w_1\neq w_2$.
\begin{NB3}
Hence $\det V^*$ is trivialized by $b\wedge bB = b_1 b_2 (w_1 - w_2)$
over the open subset $w_1\neq w_2$.

Changed on June 19.
\end{NB3}
On the other hand the section $y^\ialpha$ of \eqref{eq:bow4} is
$b_1 b_2 (w_1 - w_2)$. (cf.\ \eqref{eq:bow2}.) Therefore
$y^\ialpha = b\wedge bB$. Thus $y^\ialpha$ extends to a nonvanishing
section over $\cM_0(\underline{\bv}',\underline{\bw}')$.
The same is true for $z^0$.

On the other hand, we have an isomorphism
$H^{\GL(2)_\cO}_*(\tilde\cR^{\vk})\cong
H^{\GL(2)_\cO}_*(\Gr_{\GL(2)})$ if we choose an isomorphism
$Z_{\tilde G}(t)
\begin{NB}
  = \CC^\times_\dil\times (\GL(2)\times T^{|\underline{\bv}''|}\times
  T(\bw))/\CC^\times
\end{NB}%
\cong \GL(2)\times T^{|\underline{\bv}''|}\times G_F$.
The homology class $\sy^\vk$ is identified with a power of
${}'y_1$, ${}'y_2$, which is an invertible function.
Therefore the isomorphism of line bundles extends over
$\cM_0(\underline{\bv}',\underline{\bw}')$.

\begin{NB}
  The proof of the next case is \emph{not} complete. But it
  should be an easier case.

  It is finished on May 11.
\end{NB}%

For (b) with $n=1$, we are reduced to the situation of \ref{Hilbert
  S0} if $\nu_* < 0$. Thus the local model $\cM_C^\vk(\GL(2),\gl(2))
\begin{NB}
= \Proj(\bigoplus_{n\ge 0}
H^{G_\cO}_*(\tilde\cR^{n\vk}))
\end{NB}%
$ is $\on{Hilb}^2(\cS_0)$, and
the line bundle is a power of the determinant line bundle. On the
other hand, the local model of the bow variety is given in
\cite[6.5.2]{2016arXiv160602002N} with $\bw = 0$. It coincides with the
description in \cite[\S1]{Lecture} with constraint $A$ being
invertible. It is nothing but $\on{Hilb}^2(\cS_0)$ and the relevant
line bundles coincide.
Moreover our definition of the section $y^\vk$ is compatible with the
open embedding
$\on{Sym}^2(\cS_0)\setminus\Delta_{\cS_0}
\hookrightarrow\on{Sym}^2(\BA^2)\setminus\Delta_{\BA^2}$ ($\Delta_?$
denotes the diagonal) as in \ref{Hilbert S0}. And the isomorphism is
unique up to a multiplicative scalar on
$\on{Sym}^2(\BA^2)\setminus\Delta_{\BA^2}$ by \ref{costalk
  Hilbert}(b). Therefore our isomorphism coincides with one in
\ref{Hilbert S0}, hence extends over $\Delta_{\cS_0}$.
If $\nu_* > 0$, we take transposes of linear maps to deduce the
assertion from the $\nu_* < 0$ case.
\begin{NB3}
  The new case $\nu_* > 0$ is added.
\end{NB3}

\begin{NB}
  The following case is studied carefully.
\end{NB}%

Let us consider the case (c). First suppose $n > 1$. The local model
for the bow variety side is \cite[6.5.3]{2016arXiv160602002N}:
\begin{align*}
    \xymatrix@C=1.2em{
  & \CC \ar@(ur,ul)_{w} \ar@<-.5ex>[rr]_{C_{1}}
  && \CC \ar@<-.5ex>[ll]_{D_{1}} \ar@<-.5ex>[rr]_{C_{2}}
    && \cdots \ar@<-.5ex>[ll]_{D_{2}} \ar@<-.5ex>[rr]_{C_{N}}
  && \CC \ar@(ur,ul)_{w} \ar@<-.5ex>[ll]_{D_{N}} \ar[dr]_b&
  \\
\CC \ar[ur]_a && &&&&&& \CC,
      }
\end{align*}
where we set $N = \bw_i$ and drop subscripts $i$.
We have
$\cM_0(\underline{\bv}',\underline{\bw}')\cong \cS_N = \{ YZ=W^{N}\}$.
Here $a$ and $b$ are normalized to $1$ thanks to the conditions
(S1),(S2), and we set $y = C_{N}\cdots C_{1}$,
$z = D_{1}\cdots D_{N}$. The section $y^\ialpha$ ($\ialpha=1,\dots,N$)
of the line bundle $(\det V^{\ialpha})^*$ is
$b C_{N}\cdots C_{\ialpha+1}$.
Sections $y^N$, $z^0$ are nowhere vanishing, as well as the
corresponding $\sy^N$, $\sz^0$. So let us ignore $y^N$, $\sy^N$,
$z^0$, $\sz^0$ hereafter.  In particular, we omit the second line in
\eqref{eq:bow5} for the definition of $y^\vk$.
\begin{NB3}
  Added on June 19.
\end{NB3}

After the normalization $a = b = 1$, it becomes a quiver variety of
type $A_{N-1}$. When $\vk_{1,i} > \vk_{2,i} > \cdots > \vk_{\bw_i,i}$,
it is easy to see that $\cM_\vk(\underline{\bv}',\underline{\bw}')$ is
the minimal resolution $\wit\cS_N$ of $yz=w^{N}$ so that $(\det V^1)^*$,
\dots, $(\det V^{N-1})^*$ correspond to line bundles $\cL_{\omega_1}$,
\dots, $\cL_{\omega_{N-1}}$, corresponding to weights $\omega_1$,
\dots, $\omega_{N-1}$ in \ref{line Klein}. On the other hand,
$(\det V^N)^*$ is the trivial line bundle $\shfO_{\wit\cS_N}$. (The
$\vk$-stability under the assumption
$\vk_{1,i} > \vk_{2,i} > \cdots > \vk_{\bw_i,i}$ coincides with the
stability used in \cite{Na-alg}.%
\begin{NB}%
  It means that there is no nonzero subspace
  $S^1\oplus\cdots\oplus S^{N-1}\subset V^1\oplus\cdots\oplus V^{N-1}$
  invariant under $C$, $D$ and $D_1(S^1) = 0$, $C_N(S^{N-1}) =
  0$. In particular, the chain of $\proj^1$'s is characterized by
  $C_1 = 0 = D_N$. Then $\ialpha$-th $\proj^1$ ($\ialpha=1,\dots,N-1$)
  is
\begin{align*}
    \xymatrix@C=1.2em{
  & \CC \ar@(ur,ul)_{w=0} 
  && \CC \ar[ll]_{D_{1}} 
  && \cdots \ar[ll]_{D_{2}} && \CC \ar[ll]_{D_{\ialpha}}
  \ar[rr]_{C_{\ialpha+1}} && \cdots \ar[rr]_{C_N}
  && \CC \ar@(ur,ul)_{w=0} 
     \ar[dr]_{b=1}&
  \\
\CC \ar[ur]_{a=1} &&&&&&&&&&&& \CC,
      }
\end{align*}
where arrows for vanishing linear maps are not written. The
homogeneous coordinates are given by
$[D_1\cdots D_\ialpha:C_N\cdots C_{\ialpha+1}]$.
Note that $D_1\cdots D_\ialpha$, $C_N\cdots C_{\ialpha+1}$ are sections
of $(\det V^\ialpha)^*$.
\end{NB}%
)
Moreover the section $y^\ialpha$ is $v^{N -\ialpha}$ under
the isomorphism
$\Gamma(\wit\cS_N,\cL_{\omega_{\ialpha}})\cong
\CC[\BA^2]^{\chi_{\omega_{\ialpha}}}$. (This holds even for
$\ialpha=N$.)
\begin{NB}
  Recall that $y=C_1\cdots C_N = v^N$, $z= D_1\cdots D_N = u^N$. So it
  seems that we should identify $C_\ialpha$ with $v$, $D_\ialpha$ with
  $u$.
\end{NB}%
\begin{NB}
  We have $v^{N-\ialpha} = y^\ialpha = C_N\cdots C_{\ialpha+1}$. This
  is $r^1$ in the notation in \ref{Klein via}. We have $r^m = y^{m-1} r^1
  = (C_1\cdots C_N)^{m-1} C_N\cdots C_{\ialpha+1}$ for $m > 0$.
  We also have
  $r^0 = u^\ialpha = u^N (uv)^{\ialpha-N} v^{N-\ialpha} = z
  w^{\ialpha-N} v^{N-\ialpha} = D_1\cdots D_\ialpha$. Then $r^m = z^{-m} r^0
  = (D_1\cdots D_N)^{-m} D_1\cdots D_\ialpha$ for $m\le 0$.

  On the other hand, $z^\alpha = C_\ialpha \cdots C_1$ is a section of
  $\det V^\ialpha$ such that
  $\langle C_\ialpha\cdots C_1, C_N\cdots C_{\ialpha+1}\rangle =
  v^N$. Therefore
  $(C_\ialpha\cdots C_1)^{-1} = v^{-N} C_N\cdots C_{\ialpha+1}$. It is
  a rational section $v^{-\ialpha}$.
\end{NB}%
This remains true if $\vk_{\ialpha-1,i} > \vk_{\ialpha,i}$, and other
inequalities may \emph{not} be strict if we replace $\wit\cS_N$ by a
partial resolution of $\cS_N$.
Thus $y^\vk$ is a section of the line bundle
$\cL_\vk = \bigotimes_{\ialpha=1}^{N-1} \cL_{\omega_\ialpha}^{\otimes
  (\vk_{\ialpha,i} - \vk_{\ialpha+1,i})}$, given by the product
$\bigotimes_{\ialpha=1}^{N-1} (v^{N-\ialpha})^{\otimes (\vk_{\ialpha,i} - \vk_{\ialpha+1,i})}$.

The gauge theory
$(\GL(\underline{\bv}'),\bN(\underline{\bv}',\underline{\bw}'))$ is
one studied in \ref{Klein via} with $N=\bw'_i$. We have an extra
$\CC^\times_{\dil}$ in the flavor symmetry group, but it acts
trivially on $\bN(\underline{\bv}',\underline{\bw}')$. Let us ignore
$\CC^\times_\dil$ from now on.
Recall $\sy^\vk$ is the fundamental class of $\pi_T^{-1}(\tilde\vk)$ where
$\tilde\vk = (\vk_{1,i},\vk_{1,i},\vk_{2,i},\dots,\vk_{N,i})$ is
a coweight of
$(\CC^\times\times T^N)/\CC^\times = (\GL(\bv'_i) \times T^{\bw'_i}) /\CC^\times$.
This is so for the lift $\tilde\vk$ of a particular $\vk$ as in
\eqref{eq:bow6}, but remains to be true for $\tilde\vk$ of arbitrary
$\vk$ with \eqref{eq:bow3} if we ignore the second line in
\eqref{eq:bow6}. See \ref{abel}.
\begin{NB3}
  Corrected on June 19.
\end{NB3}%
On the other hand, the fundamental class of
$\pi_T^{-1}(\tilde\omega_\ialpha)$ corresponds to $v^{N-\ialpha}$ by the
computation in~\ref{Klein via}, where
$\tilde\omega_\ialpha = (1,\underbrace{1,\dots,1}_{\text{$\ialpha$
    times}},\underbrace{0,\dots,0}_{\text{$N-\ialpha$ times}})$ is also
a coweight of $(\CC^\times\times T^N)/\CC^\times$.
Since
\begin{equation*}
  \sum_{\ialpha=1}^{N-1} (\vk_{\ialpha,i} - \vk_{\ialpha+1,i})\tilde\omega_\ialpha
  \begin{NB}
  = (\vk_{1,i} - \vk_{N,i},\vk_{1,i} - \vk_{N,i},
  \vk_{2,i} - \vk_{N,i},\dots, \vk_{N-1,i} - \vk_{N,i}, 0)
\end{NB}%
  = \vk
\end{equation*}
holds (up to shift), the class $\sy^\vk$ is equal to
$\bigotimes_{\ialpha=1}^{N-1} (v^{N-\ialpha})^{\otimes (\vk_{\ialpha,i} -
  \vk_{\ialpha+1,i})}$, which is nothing but $y^\vk$.
\begin{NB}
  Here we implicitly use that the product
  $i_\lambda\scAfor\otimes i_\mu\scAfor\to i_{\lambda+\mu}\scAfor$ is
  compatible with the tensor product of line bundles. The proof of
  this statement cannot be found anywhere.

  May 8: This is fixed.
\end{NB}%
This is nothing but the isomorphism normalized as in \ref{rem:ambiguity}.
Thus the isomorphism extends over
$\cM_\vk(\underline{\bv}',\underline{\bw}')$.

\begin{NB}
  The final part is \emph{not} complete either.

  Completed on May 11.
\end{NB}%

If $n=1$, we have
$\bN(\underline{\bv}',\underline{\bw}') = \End(\CC)\oplus
\Hom(\CC^{\bw'_i},\CC)$ and $\GL(\underline{\bv}') = \CC^\times$ acts
trivially on the summand $\End(\CC)$. On the other hand
$\CC^\times_\dil$ acts on $\End(\CC)$ by scaling and trivially on
$\Hom(\CC^{\bw'_i},\CC)$. Then we can separate $\End(\CC)$ and
$\Hom(\CC^{\bw'_i},\CC)$, and both are already treated.
\end{proof}

\subsection{Computation}
\label{computation}

For a later purpose we compute the case (a) with $n\ge 3$ in more
detail. Let us drop the assumption $\underline{\bw}'=0$ and study
general cases with $\bw'_i$, $\bw'_{i-1}$. Let us also write $j$
instead of $i-1$. Let us suppose $i\neq 0$ for brevity. Therefore we
ignore $\vk_*$. Let us also drop `${}'$' from dimension vectors.

\begin{NB}
  \renewenvironment{NB}{ \color{blue}{\bf NB2}. \footnotesize }{}
  \renewenvironment{NB2}{ \color{purple}{\bf NB3}. \footnotesize }{}
  Consider the case (a) with $n\ge 3$. In order to study relations
  between $\vk_{\ialpha,i}$ and $\vk_{\ibeta,i-1}$, let us drop the
  assumption $\underline{\bw}'=0$ and study general cases with
  $\bw'_i$, $\bw'_{i-1}$. Let us also write $j$ instead of $i-1$. Let
  us suppose $i\neq 0$ for brevity. Therefore we ignore $\vk_*$.
\end{NB}%
  
Let us consider the local model for the bow variety side. It is
\cite[6.5.6]{2016arXiv160602002N}:
\begin{align*}
    \xymatrix@C=2.5em{
  &\CC \ar@(ur,ul)_{w_{j}}\ar@<-.5ex>[r]_{C_{1,j}}
  & \CC \ar@<-.5ex>[l]_{D_{1,j}} \ar@<-.5ex>[r]_{C_{2,j}}
  & \cdots \ar@<-.5ex>[l]_{D_{2,j}} \ar@<-.5ex>[r]_{C_{\bw_{j},j}}
  & \CC \ar@(ur,ul)_{w_{j}} \ar[rr]^{A} \ar[dr]_{b_{j}} \ar@<-.5ex>[l]_{D_{\bw_{j},j}}
  && \CC \ar@(ur,ul)_{w_{i}} \ar@<-.5ex>[r]_{C_{1,i}}
  & \CC \ar@<-.5ex>[l]_{D_{1,i}} \ar@<-.5ex>[r]_{C_{2,i}}
  & \cdots \ar@<-.5ex>[l]_{D_{2,i}} \ar@<-.5ex>[r]_{C_{\bw_i,i}}
  & \CC \ar@(ur,ul)_{w_{i}}\ar[dr]_{b_{i}} \ar@<-.5ex>[l]_{D_{\bw_{i},i}}
      &
  \\
\CC \ar[ur]_{a_{j}}  &&&&& \CC \ar[ur]_{a_{i}} &&&&& \CC
      }
\end{align*}

Note that $\CC[\cM_0(\underline{\bv},\underline{\bw})]$ written in
\cite[6.5.6]{2016arXiv160602002N} is wrong, hence we will give a
detail.

We normalize $a_{j} = 1$, $b_i = 1$ thanks to the conditions
(S1),(S2). We also know that $A\neq 0$ thanks to (S1),(S2).
The defining equation for the middle triangle is
$(w_i - w_{j})A = a_i b_{j}$.

We introduce functions
\begin{equation*}
  \begin{gathered}
   z_j = D_{1,j} \cdots D_{\bw_j,j} A^{-1} a_i, \quad
   z_i = b_j A^{-1} D_{1,i} \cdots D_{\bw_i, i}, \quad
   z_{j,i} = D_{1,j} \cdots D_{\bw_j,j} A^{-1}D_{1,i} \cdots D_{\bw_i, i}, \\
  y_j = b_{j} C_{\bw_{j},j}\cdots C_{1,j}, \quad
  y_i = C_{\bw_i,i}\cdots C_{1,i} a_i, \quad
  y_{j,i} = C_{\bw_i,i} \cdots C_{1,i} A C_{\bw_{j},j}\cdots C_{1,j}.
  \\
  \end{gathered}
\end{equation*}
Then
\begin{equation*}
  \begin{gathered}
    z_j z_i
    \begin{NB}{\scriptstyle
      = D_{1,j} \cdots D_{\bw_j,j} A^{-1} a_i b_j A^{-1}
    D_{1,i} \cdots D_{\bw_i,i} = (w_i - w_j) D_{1,j} \cdots
    D_{\bw_j,j} A^{-1} D_{1,i} \cdots D_{\bw_i,i}}
    \end{NB}%
    = (w_i - w_j)
    z_{j,i},
    \\
    y_j y_i = (w_i - w_j) y_{j,i}, \quad
    z_{j,i} y_{j,i}
    \begin{NB}
      {\scriptstyle
= D_{1,j} \cdots D_{\bw_j,j} A^{-1}
    D_{1,i}\cdots D_{\bw_i,i} C_{\bw_i,i} \cdots C_{1,i} A
    C_{\bw_j,j}\cdots C_{1,j}    }\end{NB}%
  = w_i^{\bw_i} w_j^{\bw_j},\\
    z_i y_i
    \begin{NB}{\scriptstyle
= C_{\bw_i,i} \cdots C_{1,i} a_i b_j A^{-1} D_{1,i}
    \cdots D_{\bw_i,i}}
    \end{NB}%
    = (w_i - w_j) w_i^{\bw_i},\\
    z_j y_j
    \begin{NB}{\scriptstyle
= D_{1,j}\cdots D_{\bw_j,j} A^{-1} a_i b_j C_{\bw_j,j}
    \cdots C_{1,j}       }
    \end{NB}%
= (w_i - w_j) w_j^{\bw_j}\\
    z_i y_{j,i} = w_i^{\bw_i} y_j, \quad z_j y_{j,i} = w_j^{\bw_j} y_i, \quad
    y_i z_{j,i} = w_i^{\bw_i} z_j, \quad y_j z_{j,i} = w_j^{\bw_j} z_i.
  \end{gathered}
\end{equation*}
We have $\cM_0(\underline{\bv}, \underline{\bw})
\cong \{ (w_j, w_i, y_j,y_i,y_{j,i},z_j,z_i,z_{j,i}) \mid
\text{above equations}\}$.
On the other hand, this is isomorphic to the Coulomb branch, where
$y_j$, $y_i$, $y_{j,i}$ are fundamental classes of fibers over
$(1,0)$, $(0,1)$, $(1,1)$, and $z_j$, $z_i$, $z_{j,i}$ are those
over $(-1,0)$, $(0,-1)$, $(-1,-1)$.

Let us suppose $w_{j}$, $w_i\neq 0$. Then all
$C_{\ialpha,j}$, $D_{\ialpha,j}$, $C_{\ibeta,i}$, $D_{\ibeta,i}$
become isomorphisms.
Since $z_{j,i} y_{j,i} = w_i^{\bw_i} w_j^{\bw_j}$, $z_{j,i}$ and
$y_{j,i}$ are invertible. We can eliminate $z_{j,i}$,
$z_i = y_{j,i}^{-1} w_i^{\bw_i} y_j$, $z_j = y_{j,i}^{-1} w_j^{\bw_j} y_i$. Hence
$\cM_0(\underline{\bv},\underline{\bw})|_{w_j,w_i\neq 0} \cong
\{ (w_j^{\pm1}, w_i^{\pm1}, y_j, y_i, y_{j,i}^{\pm1}) \mid
y_j y_i = y_{j,i}(w_i-w_j)\}$.

On the other hand when $w_j\neq w_i$, we can eliminate
$y_{j,i} = (w_i - w_j)^{-1} y_j y_i$,
$z_{j,i} = (w_i - w_j)^{-1} z_j z_i$. Hence
$\cM_0(\underline{\bv},\underline{\bw})|_{w_j\neq w_i} \cong \{
(w_j, w_i, y_j, y_i, z_j, z_i) \mid y_jz_j = (w_i - w_j) w_j^{\bw_j},
y_i z_i = (w_i - w_j) w_i^{\bw_i}\}|_{w_i\neq w_j}$. This is an open
subset in the product of type $A_{\bw_j - 1}$ and $A_{\bw_i - 1}$
simple singularities.

Let us recall sections
$y_j^\ialpha = b_j C_{\bw_j,j} \cdots C_{\ialpha+1,j}$,
$y_i^\ibeta = b_i C_{\bw_i,i} \cdots C_{\ibeta+1,i}$ of
$(\det V_j^{\ialpha})^*$, $(\det V_i^\ibeta)^*$ respectively. We
consider other sections
\begin{equation*}
  \begin{gathered}
    {}'y_j^\ialpha = C_{\bw_i,i}\cdots C_{1,i} A C_{\bw_j,j}\cdots C_{\ialpha+1,j},
    \quad
    z_j^\ialpha \defeq D_{1,j}\cdots D_{\ialpha,j}, \\
    z_i^\ibeta \defeq b_j A^{-1} D_{1,i} \cdots D_{\ibeta,i},\quad
    {}'z_i^\ibeta \defeq D_{1,j}\cdots D_{\bw_j,j} A^{-1} D_{1,i}\cdots D_{\ibeta,i}.
  \end{gathered}
\end{equation*}
We have
\begin{equation*}
  \begin{gathered}
    y_j {}'y_j^\ialpha
    \begin{NB}
    = b_j C_{\bw_j,j} \cdots C_{1,j} C_{\bw_i,i}\cdots C_{1,i}
    A C_{\bw_j,j}\cdots C_{\ialpha+1,j}
    \end{NB}%
    = y_{j,i} y_j^\ialpha \\
  z_j y_j^\ialpha
  \begin{NB}
    = D_{1,j} \cdots D_{\bw_j,j} A^{-1} a_i
  b_j C_{\bw_j,j} \cdots C_{\ialpha+1}
  = (w_i - w_j) w_j^{\bw_j-\ialpha} D_{1,j}\cdots D_{\ialpha}
  \end{NB}%
  = (w_i - w_j) w_j^{\bw_j-\ialpha} z_j^\ialpha, \\
  y_i y_j^\ialpha
  \begin{NB}
    = C_{\bw_i,i} \cdots C_{1,i} a_i
  b_j C_{\bw_j,j} \cdots C_{\ialpha+1}
  \end{NB}%
  = (w_i - w_j)\,{}'y_j^\ialpha,\\
  z_{j,i} y_j^\ialpha
  \begin{NB}
    =   D_{1,j}\cdots D_{\bw_j,j} A^{-1} D_{1,i}\cdots D_{\bw_i,i}
    b_j C_{\bw_j,j}\cdots C_{\ialpha+1,j}
  \end{NB}%
  = w_j^{\bw_j-\ialpha} z_i z_j^\ialpha.
  \end{gathered}
\end{equation*}
Note
$z_i z_j^\ialpha = b_j A^{-1} D_{1,i}\cdots D_{\bw_i,i} D_{1,j}\cdots
D_{\ialpha,j}$. Similarly we have
\begin{equation*}
  \begin{gathered}
    y_j z_j^\ialpha
    \begin{NB}
      =  b_j C_{\bw_j,j}\cdots C_{1,j} D_{1,j}\cdots D_{\ialpha,j}
    \end{NB}%
    = w_j^{\ialpha} y_j^\ialpha,\\
    y_{j,i} z_j^\ialpha
    \begin{NB}
      = C_{\bw_i,i}\cdots C_{1,i} A C_{\bw_j,j}\cdots C_{1,j}
    D_{1,j}\cdots D_{\ialpha,j}
    \end{NB}%
    = w_j^{\ialpha}{}'y_j^\ialpha.
  \end{gathered}
\end{equation*}
\begin{NB}
Note that we do not have a formula for $z_i z_j^\ialpha$, similar to
$y_i y_j^\ialpha = (w_i - w_j){}'y_j^\ialpha$.
\end{NB}%

Let us consider the local model in the Coulomb branch side. Let us
take a coweight $(m,1^\ialpha,0^{\bw_j-\ialpha},n,0^{\bw_i})$
\begin{NB}
\[
   (m,\underbrace{1,\dots,1}_{\text{$\ialpha$
    times}},\underbrace{0,\dots,0}_{\text{$\bw_j-\ialpha$
    times}},n,\underbrace{0,\dots,0}_{\text{$\bw_i$ times}})
\]
\end{NB}%
of
$(\GL(V_j)\times T^{\bw_j}\times \GL(V_i)\times
T^{\bw_i})/\CC^\times$.
\begin{NB}
  $\sy_j^{\ialpha}$ is the case $m=1$, $n=0$.
\end{NB}%
Let ${}^\ialpha r^{m,n}$ denote the fundamental class of the fiber for
the projection $\tilde\cR\to \Gr_{\tilde G}$. We can compute products
of ${}^\ialpha r^{m,n}$ with $y_i$, $y_j$, $y_{j,i}$, $z_i$, $z_j$,
$z_{j,i}$ by the formula in \ref{sec:abelian}.
\begin{NB}
Pairings with weights
are $m-1$,\dots, $m-1$ ($\ialpha$ times) $m$, \dots, $m$
($\bw_j-\ialpha$ times) for $\Hom(W_j,V_j)$, $n$, \dots, $n$ ($\bw_i$
times) for $\Hom(W_i,V_i)$, and $n-m$ for $\Hom(V_j,V_i)$.
Recall that $y_j$, $y_i$ are represented by the fundamental
classes of the fibers of $(1,0)$, $(0,1)$. Therefore
\begin{equation*}
  \begin{split}
    & y_j {}^\ialpha r^{m,n} =
    \begin{cases}
      {}^\ialpha r^{m+1,n} & \text{if $m \ge 1$, $n\le m$}, \\
      (w_i - w_j) {}^\ialpha r^{m+1,n} & \text{if $m\ge 1$, $n > m$}, \\
      w_j^{\ialpha}\, {}^\ialpha r^{m+1,n} & \text{if $m=0$, $n \le m$}, \\
      w_j^{\ialpha}(w_i - w_j){}^\ialpha r^{m+1,n} & \text{if $m=0$, $n > m$}, \\
      w_j^{\bw_j} {}^\ialpha r^{m+1,n} & \text{if $m < 0$, $n\le m$}, \\
      (w_i - w_j) w_j^{\bw_j} {}^\ialpha r^{m+1,n} & \text{if
        $m < 0$, $n > m$},
    \end{cases}\\
    & y_i {}^\ialpha r^{m,n} =
    \begin{cases}
      {}^\ialpha r^{m,n+1} & \text{if $n \ge 0$, $n\ge m$}, \\
      (w_i - w_j) {}^\ialpha r^{m,n+1} & \text{if $n\ge 0$, $n < m$}, \\
      w_i^{\bw_i} {}^\ialpha r^{m,n+1} & \text{if $n < 0$, $n\ge m$}, \\
      (w_i - w_j) w_i^{\bw_i} {}^\ialpha r^{m,n+1} & \text{if $n < 0$, $n < m$}.
    \end{cases}
  \end{split}
\end{equation*}
Similarly $z_j$, $z_i$ are represented by the fundamental classes of the
fibers of $(-1,0)$, $(0,-1)$. Therefore
\begin{equation*}
  \begin{split}
    & z_j {}^\ialpha r^{m,n} =
    \begin{cases}
      {}^\ialpha r^{m-1,n} & \text{if $m \le 0$, $n\ge m$}, \\
      (w_i - w_j) {}^\ialpha r^{m-1,n} & \text{if $m\le 0$, $n < m$}, \\
      w_j^{\bw_j-\ialpha}\, {}^\ialpha r^{m-1,n} & \text{if $m=1$, $n \ge m$}, \\
      w_j^{\bw_j-\ialpha}(w_i - w_j){}^\ialpha r^{m-1,n} & \text{if $m=1$, $n < m$}, \\
      w_j^{\bw_j} {}^\ialpha r^{m-1,n} & \text{if $m > 1$, $n\ge m$}, \\
      (w_i - w_j) w_j^{\bw_j} {}^\ialpha r^{m-1,n} & \text{if
        $m > 1$, $n < m$},
    \end{cases}\\
    & z_i {}^\ialpha r^{m,n} =
    \begin{cases}
      {}^\ialpha r^{m,n-1} & \text{if $n \le 0$, $n\le m$}, \\
      (w_i - w_j) {}^\ialpha r^{m,n-1} & \text{if $n\le 0$, $n > m$}, \\
      w_i^{\bw_i} {}^\ialpha r^{m,n-1} & \text{if $n > 0$, $n\le m$}, \\
      (w_i - w_j) w_i^{\bw_i} {}^\ialpha r^{m,n-1} & \text{if $n > 0$, $n > m$}.
    \end{cases}
  \end{split}
\end{equation*}
Finally $y_{j,i}$, $z_{j,i}$ are represented by the fundamental classes
of the fibers over $(1,1)$, $(-1,-1)$. Hence
{\allowdisplaybreaks[4]
\begin{equation*}
  \begin{split}
    &
  y_{j,i} {}^\ialpha r^{m,n} =
  \begin{cases}
    {}^\ialpha r^{m+1,n+1} & \text{if $m\ge 1$, $n\ge 0$}, \\
    w_j^\ialpha {}^\ialpha r^{m+1,n+1} & \text{if $m=0$, $n\ge 0$}, \\
    w_j^{\bw_j} {}^\ialpha r^{m+1,n+1} & \text{if $m < 0$, $n\ge 0$}, \\
    w_i^{\bw_i} {}^\ialpha r^{m+1,n+1} & \text{if $m \ge 1$, $n < 0$}, \\
    w_i^{\bw_i} w_j^\ialpha {}^\ialpha r^{m+1,n+1} & \text{if $m = 0$, $n < 0$}, \\
    w_i^{\bw_i} w_j^{\bw_j} {}^\ialpha r^{m+1,n+1} & \text{if $m < 0$,
      $n < 0$},
  \end{cases}
  \\ &
  z_{j,i} {}^\ialpha r^{m,n} =
  \begin{cases}
    {}^\ialpha r^{m-1,n-1} & \text{if $m\le 0$, $n\le 0$}, \\
    w_j^{\bw_j-\ialpha} {}^\ialpha r^{m-1,n-1} & \text{if $m=1$, $n\le 0$}, \\
    w_j^{\bw_j} {}^\ialpha r^{m-1,n-1} & \text{if $m > 1$, $n\le 0$}, \\
    w_i^{\bw_i} {}^\ialpha r^{m-1,n-1} & \text{if $m \le 0$, $n > 0$}, \\
    w_i^{\bw_i} w_j^{\bw_j-\ialpha} {}^\ialpha r^{m-1,n-1}
    & \text{if $m = 1$, $n > 0$}, \\
    w_i^{\bw_i} w_j^{\bw_j} {}^\ialpha r^{m-1,n-1} & \text{if $m > 1$,
      $n > 0$}.
  \end{cases}
  \end{split}
\end{equation*}
Therefore}
\end{NB}%
A calculation shows that
\begin{equation}\label{eq:rmn}
  {}^\ialpha r^{m,n} =
  \begin{cases}
    y_j^{m-n-1} y_{j,i}^n y_j^\ialpha & \text{if $m > n \ge 0$},\\
    z_i^{-n} y_j^{m-1} y_j^\ialpha & \text{if $m > 0 \ge n$},\\
    y_i^{n-m} y_{j,i}^{m-1} {}'y_j^\ialpha & \text{if $n \ge m > 0$},\\
    y_i^n z_j^{-m} z_j^\ialpha & \text{if $n\ge 0\ge m$},\\
    z_i^{m-n} z_{j,i}^{-m} z_j^\ialpha & \text{if $0\ge m\ge n$},\\
    z_j^{n-m} z_{j,i}^{-n} z_j^\ialpha & \text{if $0\ge n \ge m$}
  \end{cases}
\end{equation}
gives an isomorphism of
$\CC[\cM_0(\underline{\bv},\underline{\bw})]$-modules.
\begin{NB}
\begin{equation*}
  y_j^\ialpha = {}^\ialpha r^{1,0}, \quad
  {}' y_j^\ialpha = {}^\ialpha r^{1,1}, \quad
  z_j^\ialpha = {}^\ialpha r^{0,0}
\end{equation*}
\end{NB}%


\section{Determinant line bundles on convolution diagram over the affine
Grassmannian}
\label{det slice}
In this section we identify the determinant line bundles on the convolution
diagrams over slices in the affine Grassmannian, or rather global sections of
their pushforwards to the slices, with the modules over the Coulomb branches
of the corresponding quiver gauge theories arising from the construction
of \ref{subsec:flav-symm-group2}.
\begin{NB}
\ref{sec:sheav-affine-grassm}.
\end{NB}%

\subsection{Slices revisited}
\label{revisited}
Recall the setup and notations of~\ref{subsec:bd-slices}.
We define the iterated convolution diagram $\wit\CW{}^{\unl\lambda}_\mu$ as the
moduli space of the following data:

\textup{(a)} a collection of $G$-bundles
$\scP_{\on{triv}}=\scP_0,\scP_1,\ldots,\scP_N$ on $\BP^1$;

\textup{(b)} a collection of rational isomorphisms
$\sigma_s\colon\scP_{s-1}\to\scP_s,\ 1\leq s\leq N$, regular over
$\BP^1\setminus\{0\}$, with a pole of degree $\leq \omega_{i_s}$ at $0$;

\textup{(c)} a $B$-structure $\phi$ on $\scP_N$ of degree $w_0\mu$ having fiber
$B_-\subset G$ at $\infty\in\BP^1$ (with respect to the trivialization
$\sigma:=\sigma_N\circ\ldots\circ\sigma_1$).

We have an evident proper birational projection
$\bpi\colon \wit\CW{}^{\unl\lambda}_\mu\to\ol\CW{}^\lambda_\mu$ (where
$\lambda=\sum_{s=1}^N\omega_{i_s}$), sending
$(\scP_0,\ldots,\scP_N,\sigma_1,\ldots,\sigma_N,\phi)$ to $(\scP_N,\sigma,\phi)$.

More generally, we will need an evident generalization
$\bpi\colon \wit\CW{}^{\unl{\unl\lambda}}_\mu\to\ol\CW{}^\lambda_\mu$ for an
arbitrary sequence of dominant coweights
$\unl{\unl\lambda}=(\lambda_1,\ldots,\lambda_n),\ \sum_{s=1}^n\lambda_s=\lambda$,
in place of $(\omega_{i_1},\ldots,\omega_{i_N})$.

Now recall the setup and notations of~\ref{factorization}; in particular,
we set $\alpha=\lambda-\mu$. We pick $\BN[Q_0]\ni\gamma\leq\alpha$, and set
$\beta=\alpha-\gamma$.

\begin{Proposition}
\label{prop:factoriz}
We have a factorization isomorphism
of the varieties over $(\BG_m^{\beta^*}\times\BA^{\gamma^*})_{\on{disj}}$:
$$(\BG_m^{\beta^*}\times\BA^{\gamma^*})_{\on{disj}}\times_{\BA^{\alpha^*}}\wit\CW{}^{\unl\lambda}_{\mu}\iso
(\BG_m^{\beta^*}\times\BA^{\gamma^*})_{\on{disj}}\times_{\BA^{\beta^*}\times\BA^{\gamma^*}}
(\oZ^{\beta^*}\times\wit\CW{}^{\unl\lambda}_{\lambda-\gamma}).$$ It is compatible with
the factorization isomorphism of zastava (see~\ref{zastava}) under projection
$s^\lambda_\mu\circ\bpi$.
\end{Proposition}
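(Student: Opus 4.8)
The strategy is the usual one for factorization statements of zastava type: unwind the definition of the iterated convolution diagram $\wit\CW{}^{\unl\lambda}_\mu$ given in (a)--(c) above, and observe that each piece of data localizes over the base $\BA^{\alpha^*}=\BA^{\beta^*}\times\BA^{\gamma^*}$ in a way that splits once the supports of the $\beta^*$-part and the $\gamma^*$-part are disjoint. Concretely, the point of $\BA^{\alpha^*}$ attached to a point of $\wit\CW{}^{\unl\lambda}_\mu$ records (via $s^\lambda_\mu\circ\bpi$ composed with the zastava factorization map) the colored divisor of defects of the $B$-structure $\phi$ together with the positions of the poles of the $\sigma_s$ at $0$; but the poles at $0$ all sit over the single point $0\in\BP^1$, so the only moving part is the divisor on $\BP^1\setminus\{0\}$ coming from $\phi$. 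Splitting that divisor as $D_\beta + D_\gamma$ with $D_\beta$ supported away from $0$ (this is exactly the disjointness hypothesis) lets us restrict all the bundles $\scP_s$ and the maps $\sigma_s$ to formal (or Zariski) neighborhoods of $\{0\}\cup\mathrm{supp}(D_\gamma)$ and of $\mathrm{supp}(D_\beta)$ separately.

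First I would set up the factorization map precisely, i.e. recall from~\ref{factorization} and~\ref{zastava} how $\wit\CW{}^{\unl\lambda}_\mu$ maps to $\BA^{\alpha^*}$ and how $\oZ^{\beta^*}$ and $\ol\CW{}^\lambda_{\lambda-\gamma}$ (hence $\wit\CW{}^{\unl\lambda}_{\lambda-\gamma}$ via $\bpi$) map to $\BA^{\beta^*}$ and $\BA^{\gamma^*}$ respectively. Then, over $(\BG_m^{\beta^*}\times\BA^{\gamma^*})_{\on{disj}}$, I would construct the isomorphism both ways. Given a point of the left-hand side: restrict the chain $\scP_0,\dots,\scP_N$ with its rational isomorphisms to the open $U_\gamma\subset\BP^1$ containing $0$ and $\mathrm{supp}(D_\gamma)$ but not $\mathrm{supp}(D_\beta)$ — there the $B$-structure has defects only along $D_\gamma$, and all the convolution poles survive, giving a point of $\wit\CW{}^{\unl\lambda}_{\lambda-\gamma}$; and restrict to the open $U_\beta$ containing $\mathrm{supp}(D_\beta)$ but not $0$ — there all $\sigma_s$ are regular isomorphisms, so the whole chain trivializes canonically to $\scP_{\mathrm{triv}}$ and only the $B$-structure with its $D_\beta$-defect remains, i.e. a point of $\oZ^{\beta^*}$. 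Conversely, glue: a $G$-bundle chain on $U_\gamma$ with the required pole behavior at $0$, together with a zastava point on $U_\beta$, glue over $U_\beta\cap U_\gamma$ (where both are trivialized and $B$-structure-free away from the respective divisors) to a chain on all of $\BP^1$ — using that $\BP^1 = U_\beta\cup U_\gamma$ and that on the overlap everything is the trivial bundle with the trivial $B$-structure. These two constructions are mutually inverse, and I would check functoriality in the test scheme $S$ so that it is an isomorphism of moduli functors, not just of points.

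Finally I would verify the compatibility clause: the diagram relating the factorization isomorphism just built to the zastava factorization isomorphism under $s^\lambda_\mu\circ\bpi$ commutes. This is essentially formal once the maps are spelled out — $\bpi$ only forgets the intermediate bundles $\scP_1,\dots,\scP_{N-1}$, and the restriction-to-$U_\beta$ construction sends the underlying zastava point of the left side to the underlying zastava point of the right side by construction — so it reduces to the already-known factorization of zastava spaces in~\ref{zastava}, applied on $U_\beta$, together with the tautological identity on the $U_\gamma$-part (where $\bpi$ lands in $\ol\CW{}^\lambda_{\lambda-\gamma}$ and the $\gamma^*$-coordinate is unchanged).

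**Main obstacle.** The delicate point is the gluing in the converse direction: one must check that a $G$-bundle (and $B$-structure) on $U_\gamma$ and one on $U_\beta$, agreeing on the overlap after the canonical trivializations, genuinely glue to an object on all of $\BP^1$ of the correct total degree $w_0\mu$, and that the pole of degree $\le\omega_{i_s}$ at $0$ is preserved — i.e. that nothing is lost at the "seam". This is where the disjointness of the supports and the fact that $\BP^1$ is covered by the two opens is used essentially, and where one should be careful with the degree bookkeeping $\mu = (\lambda-\gamma) - \beta$ so that the glued $B$-structure has the right degree; the rest is routine descent/gluing for moduli of bundles with level structure.
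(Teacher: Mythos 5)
Your proposal is correct and is essentially the argument the paper itself invokes: the paper's proof consists of the single line ``The same argument as in the proof of~\cite[Proposition~2.4]{bfgm}'', and what you have written out — splitting the defect divisor of the $B$-structure according to the disjoint $\beta^*$- and $\gamma^*$-supports, noting that the convolution poles all sit at $0$ so only the $\gamma$-part interacts with them, and regluing by Beauville--Laszlo-type descent with the degree bookkeeping $\mu=(\lambda-\gamma)-\beta$ — is precisely that standard factorization argument. The only caution is that the ``restriction to opens'' should be phrased as regluing with the trivial bundle outside a neighborhood of the relevant divisor (so that the resulting object again lives on all of $\BP^1$ with the required $B_-$-fiber at $\infty$), which your gluing paragraph already implicitly handles.
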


\begin{proof}
The same argument as in the proof of~\cite[Proposition~2.4]{bfgm}.
\end{proof}

We fix $i\in Q_0$; recall that $\alpha_i$ is the corresponding simple coroot.
In what follows we will use a particular case of~\ref{prop:factoriz}
similar to~\ref{prop:factor}, where $\gamma=\alpha_i$ and $\beta=\alpha-\alpha_i$.
Here we are additionally able to identify $\wit\CW{}^{\unl\lambda}_{\mu}$
with the minimal resolution of the Kleinian surface $\cS_{\langle\lambda,\alphavee_i\rangle}$.
Recall the birational isomorphism of~\ref{factorization}
\begin{equation*}\varphi\colon
(\BG_m^{\beta^*}\times\BA^1)_{\on{disj}}\times_{\BA^{\alpha^*}}\oW^\lambda_{\mu}\dasharrow
(\BG_m^{\beta^*}\times\BA^1)_{\on{disj}}\times_{\BA^{\beta^*}\times\BA^1}(\oZ^{\beta^*}\times
  \cS_{\langle\lambda,\alphavee_i\rangle}).
\end{equation*}

\begin{Proposition}
\label{lem:factoriz}
The birational isomorphism $\varphi$ extends to a regular isomorphism
of the varieties over $(\BG_m^{\beta^*}\times\BA^1)_{\on{disj}}$:
$$(\BG_m^{\beta^*}\times\BA^1)_{\on{disj}}\times_{\BA^{\alpha^*}}\wit\CW{}^{\unl\lambda}_{\mu}\iso
(\BG_m^{\beta^*}\times\BA^1)_{\on{disj}}\times_{\BA^{\beta^*}\times\BA^1}(\oZ^{\beta^*}\times
\wit\cS_{\langle\lambda,\alphavee_i\rangle}).$$
\end{Proposition}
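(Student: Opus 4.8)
The plan is to upgrade the birational isomorphism $\varphi$ of \ref{factorization} to a regular isomorphism by a codimension/normality argument, exactly as is done for the (unresolved) slices in \cite{bfgm}, combined with the fact that both sides carry compatible proper birational maps to the unresolved factorized space whose restriction off a codimension-$2$ locus is already known to be an isomorphism.

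\textbf{Step 1: reduce to a statement over the unresolved base.} Both $(\BG_m^{\beta^*}\times\BA^1)_{\on{disj}}\times_{\BA^{\alpha^*}}\wit\CW{}^{\unl\lambda}_\mu$ and $(\BG_m^{\beta^*}\times\BA^1)_{\on{disj}}\times_{\BA^{\beta^*}\times\BA^1}(\oZ^{\beta^*}\times\wit\cS_{\langle\lambda,\alphavee_i\rangle})$ are equipped with proper birational morphisms ($s^\lambda_\mu\circ\bpi$ on the left; $\on{id}\times\bpi$ on the right, where I abusively write $\bpi$ for the minimal resolution $\wit\cS_{\langle\lambda,\alphavee_i\rangle}\to\cS_{\langle\lambda,\alphavee_i\rangle}$) down to the corresponding factorized \emph{unresolved} space over $(\BG_m^{\beta^*}\times\BA^1)_{\on{disj}}$. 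On that base, \ref{factorization} already gives a regular isomorphism $(\BG_m^{\beta^*}\times\BA^1)_{\on{disj}}\times_{\BA^{\alpha^*}}\oW^\lambda_\mu\iso(\BG_m^{\beta^*}\times\BA^1)_{\on{disj}}\times_{\BA^{\beta^*}\times\BA^1}(\oZ^{\beta^*}\times\cS_{\langle\lambda,\alphavee_i\rangle})$. So the task is to lift this base isomorphism through the two resolutions.

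\textbf{Step 2: use factorization to localize.} Over the locus $(\BG_m^{\beta^*}\times\BA^1)_{\on{disj}}$ the left-hand side genuinely splits, via \ref{prop:factoriz} with $\gamma=\alpha_i$, $\beta=\alpha-\alpha_i$, as the product $\oZ^{\beta^*}\times\wit\CW{}^{\unl\lambda}_{\lambda-\alpha_i}$ after base change — wait, more precisely \ref{prop:factoriz} identifies the left side with $(\BG_m^{\beta^*}\times\BA^1)_{\on{disj}}\times_{\BA^{\beta^*}\times\BA^1}(\oZ^{\beta^*}\times\wit\CW{}^{\unl\lambda}_{\lambda-\alpha_i})$, and the second convolution factor $\wit\CW{}^{\unl\lambda}_{\lambda-\alpha_i}$, being a one-dimensional-root-direction iterated convolution diagram sitting over $\BA^1$, is precisely (a resolution of) the transverse slice of the single simple coroot $\alpha_i$ inside a Grassmannian slice of level $\langle\lambda,\alphavee_i\rangle$; by the $SL_2$/$PGL_2$ computation (as in \ref{factorization} and the analysis surrounding $\cS_{\langle\lambda,\alphavee_i\rangle}$) this convolution diagram \emph{is} the minimal resolution $\wit\cS_{\langle\lambda,\alphavee_i\rangle}$. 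This identifies the left side over $(\BG_m^{\beta^*}\times\BA^1)_{\on{disj}}$ with $(\BG_m^{\beta^*}\times\BA^1)_{\on{disj}}\times_{\BA^{\beta^*}\times\BA^1}(\oZ^{\beta^*}\times\wit\cS_{\langle\lambda,\alphavee_i\rangle})$, which is exactly the right side. The remaining content is that the resulting isomorphism agrees with the \emph{a priori} birational $\varphi$, i.e.\ that $\varphi$ has no indeterminacy and no exceptional behaviour.

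\textbf{Step 3: extend $\varphi$ by normality and codimension two.} By \ref{prop:factoriz} and the compatibility with zastava factorization, $\varphi$ and its inverse are already regular on the preimage of the locus where the $\BA^1$-coordinate (the coordinate on $\BA^{\gamma^*}=\BA^{\alpha_i^*}$) is nonzero — there the Kleinian surface $\cS_{\langle\lambda,\alphavee_i\rangle}$ is smooth (isomorphic to $\BC\times\BC^\times$ near a general point), its minimal resolution is an isomorphism there, and $\varphi$ reduces to the already-established isomorphism of \ref{factorization}. So $\varphi$ and $\varphi^{-1}$ fail to be defined only over the single divisor $\{$the $\BA^1$-coordinate $=0\}$ intersected with $(\BG_m^{\beta^*}\times\BA^1)_{\on{disj}}$; but the indeterminacy locus of a rational map from a normal variety is of codimension $\geq 2$, and the fibre of a resolution of $\cS_{\langle\lambda,\alphavee_i\rangle}$ over the singular point has codimension $2$ in the total space. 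Concretely: both sides are normal (the convolution diagram $\wit\CW{}^{\unl\lambda}_\mu$ is smooth, $\oZ^{\beta^*}$ is normal, $\wit\cS$ is smooth), and the map $\varphi$ is an isomorphism in codimension one; a birational map between normal varieties that is defined and an isomorphism outside a closed subset of codimension $\geq 2$ in each, and which is proper (both sides being proper over the common unresolved factorized base, by Step 1), extends to a regular isomorphism. This gives the assertion.

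\textbf{Main obstacle.} The genuinely delicate point is Step 2/Step 3's interface: verifying that the convolution factor $\wit\CW{}^{\unl\lambda}_{\lambda-\alpha_i}$ is really the \emph{minimal} resolution $\wit\cS_{\langle\lambda,\alphavee_i\rangle}$ and that the identification is compatible, under $\bpi$, with the birational map $\varphi$ coming from \cite[Prop.~2.4]{bfgm}-style factorization — i.e.\ that no spurious component of an exceptional fibre gets contracted the wrong way, and that the two resolutions (convolution diagram vs.\ minimal resolution of the surface) are matched on the nose rather than merely being abstractly isomorphic. I expect this to follow by the same $SL_2$-reduction already invoked in \ref{factorization}, reducing everything to the rank-one case where $\wit\CW$ for a single $\alpha_i$ is the Hirzebruch–Jung chain of $\BP^1$'s resolving $\cS_{\langle\lambda,\alphavee_i\rangle}$, but one must check the gluing across $(\BG_m^{\beta^*}\times\BA^1)_{\on{disj}}$ respects the zastava factorization, which is where the compatibility clause of \ref{prop:factoriz} does the work.
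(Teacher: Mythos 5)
There is a genuine gap, and it sits exactly where you flagged your ``main obstacle''. Your Step 3 rests on the principle that a proper birational map between normal varieties over a common base, which is an isomorphism away from codimension $\geq 2$, extends to a regular isomorphism. That principle is false in general (two small resolutions of a conifold, or more generally a flop, are normal, proper over the common singular base, and isomorphic in codimension one without being isomorphic), and in any case its hypothesis is not verified here: the locus over which $\varphi$ is not yet known to be an isomorphism is the preimage of the singular point of $\cS_{\langle\lambda,\alphavee_i\rangle}$, which on the resolved side is a chain of $\BP^1$'s --- a \emph{divisor}, codimension one, not two. Normality plus an abstract matching of fibres does not force $\varphi$ to extend: a priori the convolution diagram could be a partial or non-minimal resolution, or could differ from $\oZ^{\beta^*}\times\wit\cS_{\langle\lambda,\alphavee_i\rangle}$ by a modification supported on the exceptional divisor, consistently with everything you have checked. (A suppressed note at the end of this proof in the source records that an earlier draft argued essentially as you do --- smoothness plus identification of fibres --- and was rejected for precisely this reason.)

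The statement your Step 2 asserts and defers --- that the convolution factor is the minimal resolution \emph{compatibly with} $\varphi$ --- is the entire content of the proposition, and the paper's proof is devoted to it. The mechanism is to exhibit both sides as the \emph{same canonical object}: starting from the one-term sequence $(\lambda)$, the sequence of coweights is modified step by step (splitting off fundamental coweights, commuting neighbours, and, in the key case $\langle\lambda_d,\alphavee_j\rangle\geq 2$, inserting $\omega_j,\ \lambda_d-2\omega_j,\ \omega_j$); an \'etale-local product decomposition, a rank-one computation, and Tor-independence of the fibre products defining $\wit\CW$ show that each such step is the blowup of the previous convolution diagram along its reduced singular locus. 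By induction the left-hand side is identified with $\on{Bl}_{\lfloor\langle\lambda,\alphavee_i\rangle/2\rfloor}$ of the unresolved factorized space, and iterated blowup of the singular locus of $\cS_{\langle\lambda,\alphavee_i\rangle}$ is the minimal resolution. Since ``blow up the reduced singular locus'' is canonical, this identification agrees with $\varphi$ on the dense open set and therefore extends it. To salvage your route you would need, instead of a codimension bound, either this blowup identification or the universal property of the minimal resolution (prove the relevant open piece of $\wit\CW{}^{\unl\lambda}_\mu$ is smooth, factor it through $\wit\cS_{\langle\lambda,\alphavee_i\rangle}$, and show the resulting proper birational morphism contracts nothing) --- each of which requires the same local analysis the paper carries out.
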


\begin{proof}
Like in the proof of~\ref{prop:factor}, it suffices to prove the claim over
$\bZ^{\alpha^*}$. So we restrict to this open subset without further mentioning
this and introducing new notations for the corresponding open subsets in the
convolution diagrams over slices. Like in~\ref{prop:blow}, we will identify
$\wit\CW{}^{\unl\lambda}_\mu$ with a certain blowup of $\ol\CW{}^\lambda_\mu$.
To this end we consider a convolution diagram $\ol\Gr{}_G^{\lambda_1}\wit\times
\ldots\wit\times\ol\Gr{}_G^{\lambda_n}\to\ol\Gr{}_G^\lambda,\
\sum_{s=1}^n\lambda_s=\lambda$, and denote it by
$\bpi\colon\wit\Gr{}_G^{\unl{\unl\lambda}}\to\ol\Gr{}_G^\lambda$. Then just as
in~\ref{nondom}, we have $\wit\CW{}^{\unl{\unl\lambda}}_{\mu}=
\wit\Gr{}^{\unl{\unl\lambda}}_G\times_{'\!\on{Bun}_G(\BP^1)}\on{Bun}_B^{w_0\mu}(\BP^1)$.
The sequences $\unl{\unl\lambda}$ we need will have at most one term not equal
to a fundamental coweight, so that $\unl{\unl\lambda}=(\omega_{j_1},\ldots,
\omega_{j_{d-1}},\lambda_d,\omega_{j_{d+1}},\ldots,\omega_{j_n})$.
In fact, we can choose a collection of sequences
$(\lambda)={}^{(0)}\!\unl{\unl\lambda},{}^{(1)}\!\unl{\unl\lambda},\ldots,
{}^{(a)}\!\unl{\unl\lambda}=\unl\lambda=(\omega_{i_1},\ldots,\omega_{i_N})$
such that for any $b<a$ the sequence ${}^{(b+1)}\!\unl{\unl\lambda}$
is obtained from the sequence ${}^{(b)}\!\unl{\unl\lambda}$ by the procedure
${}^{(b)}\!\unl{\unl\lambda}\leadsto{}^{(b)}\!\unl{\unl\lambda}'=:
{}^{(b+1)}\!\unl{\unl\lambda}$ described in three cases (i--iii) below.

\noindent (i) In case $\lambda_d$ is not a fundamental coweight, but
$\langle\lambda_d,\alphavee_j\rangle=1$ for certain vertex $j$ (which may or
may not happen to coincide with our chosen vertex $i$), we set
$$n'=n+1,\ \lambda'_d=\lambda_d-
\omega_j,\ \unl{\unl\lambda}'=(\omega_{j_1},\ldots,
\omega_{j_{d-1}},\lambda'_d,\omega_j,\omega_{j_{d+1}},\ldots,\omega_{j_n}).$$
Then the convolution morphism
$\varpi\colon\wit\Gr{}^{\unl{\unl\lambda}'}_G\to\wit\Gr{}^{\unl{\unl\lambda}}_G$
is an isomorphism up to codimension 2, and hence the convolution morphism
$\varpi\colon\wit\CW{}^{\unl{\unl\lambda}'}_\mu\to\wit\CW{}^{\unl{\unl\lambda}}_\mu$
is an isomorphism (recall that we restricted ourselves to the open subset
over $\bZ^{\alpha^*}$).

\noindent (ii) If $\unl{\unl\lambda}=(\omega_{j_1},\ldots,\omega_{j_e},\omega_{j_{e+1}},\ldots,
\omega_{j_n})$, we set $n'=n,\ \unl{\unl\lambda}=(\omega_{j_1},\ldots,
\omega_{j_{e+1}},\omega_{j_e},\ldots,\omega_{j_n})$, i.e.\ we just swap two
neighbouring fundamental coweights. It follows from (i) above that
$\wit\CW{}^{\unl{\unl\lambda}'}_\mu=\wit\CW{}^{\unl{\unl\lambda}}_\mu$ (over
$\bZ^{\alpha^*}$).

\noindent (iii) In case $\langle\lambda_d,\alphavee_j\rangle\geq2$, we set
$$n'=n+2,\
d'=d+1,\ \lambda'_{d'}=\lambda_d-2\omega_j,\ \unl{\unl\lambda}'=(\omega_{j_1},
\ldots,\omega_{j_{d-1}},\omega_j,\lambda'_{d'},\ \omega_j,\omega_{j_{d+1}},\ldots,
\omega_{j_n}).$$ We also set $n''=n,\ \lambda''_d=\lambda_d-\alpha_j,\
\unl{\unl\lambda}''=(\omega_{j_1},\ldots,
\omega_{j_{d-1}},\lambda''_d,\omega_{j_{d+1}},\ldots,\omega_{j_n})$.

We have an open subvariety $^\circ_j\!\ol\Gr{}^{\lambda_d}_G:=
\Gr^{\lambda_d}_G\sqcup\Gr^{\lambda_d-\alpha_j}_G\subset\ol\Gr{}^{\lambda_d}_G$, and also
an open subvariety $^\circ_j\!\wit\Gr{}^{\unl{\unl\lambda}}_G:=\Gr_G^{\omega_{j_1}}
\wit\times\ldots\wit\times{}^\circ_j\!\ol\Gr{}^{\lambda_d}_G\wit\times\ldots
\wit\times\Gr_G^{\omega_{j_n}}\subset\wit\Gr{}_G^{\unl{\unl\lambda}}$.
We have a closed subvariety
$^\circ_j\!\wit\Gr^{\unl{\unl\lambda}''}_G:=\Gr_G^{\omega_{j_1}}
\wit\times\ldots\wit\times\Gr{}^{\lambda''_d}_G\wit\times\ldots
\wit\times\Gr_G^{\omega_{j_n}}\subset{}^\circ_j\!\wit\Gr{}_G^{\unl{\unl\lambda}}$.
We will denote the restriction of the convolution morphism
$\varpi\colon\wit\Gr{}^{\unl{\unl\lambda}'}_G\to\wit\Gr{}^{\unl{\unl\lambda}}_G$ to
$^\circ_j\!\wit\Gr{}^{\unl{\unl\lambda}}_G\subset\wit\Gr{}_G^{\unl{\unl\lambda}}$ by
$\varpi\colon{}^\circ_j\!\wit\Gr{}^{\unl{\unl\lambda}'}_G\to
{}^\circ_j\!\wit\Gr{}^{\unl{\unl\lambda}}_G$. Similarly, if $j\ne i$ but
$\lambda_d-\alpha_i$ is dominant, we define the open subsets
$^\circ_i\!\ol\Gr{}^{\lambda_d}_G:=
\Gr^{\lambda_d}_G\sqcup\Gr^{\lambda_d-\alpha_i}_G\subset\ol\Gr{}^{\lambda_d}_G$ and
$^\circ_i\!\wit\Gr{}^{\unl{\unl\lambda}}_G\subset\wit\Gr{}_G^{\unl{\unl\lambda}}$.
Then (if $j\ne i$) the convolution morphism
$\varpi\colon{}^\circ_i\!\wit\Gr{}^{\unl{\unl\lambda}'}_G\to
{}^\circ_i\!\wit\Gr{}^{\unl{\unl\lambda}}_G$ is an isomorphism, while
$\varpi\colon{}^\circ_j\!\wit\Gr{}^{\unl{\unl\lambda}'}_G\to
{}^\circ_j\!\wit\Gr{}^{\unl{\unl\lambda}}_G$ is the blowup of
$^\circ_j\!\wit\Gr{}^{\unl{\unl\lambda}}_G$ along the closed subvariety
$^\circ_j\!\wit\Gr^{\unl{\unl\lambda}''}_G\subset
{}^\circ_j\!\wit\Gr{}_G^{\unl{\unl\lambda}}$.

Indeed, \'etale-locally, $^\circ_j\!\wit\Gr{}^{\unl{\unl\lambda}}_G$ splits as a
product $^\circ_j\!\wit\Gr^{\unl{\unl\lambda}''}_G\times\cS_{N_j}$ where
$N_j:=\langle\lambda_d,\alphavee_j\rangle$, and $\varpi$ splits as a
product $\on{Id}\times\ol\varpi$ where $\ol\varpi\colon\cS'_{N_j}\to\cS_{N_j}$
is the restriction of $\varpi$ to any slice $\cS_{N_j}$. Now $\cS'_{N_j}$ is
a normal surface, smooth if $N_j=2$, and the fiber of $\ol\varpi$ over
$0\in\cS_{N_j}$ is the projective line if $N_j=2$. Furthermore, if $N_j>2$, then
the fiber of $\ol\varpi$ over $0\in\cS_{N_j}$ is a union of two projective lines
intersecting at a point; this point in $\cS'_{N_j}$ has Kleinian
$A_{N_j-3}$-singularity (in particular, it is smooth if $N_j=3$).
The check reduces to the case of rank $1$ by the argument
of~\cite[Section~3]{mov}. In rank $1$ it follows e.g.\ from~\cite{MR1968260}.
We conclude
that $\ol\varpi\colon\cS'_{N_j}\to\cS_{N_j}$ is the blowup of $\cS_{N_j}$ at
$0\in\cS_{N_j}$ (in effect, the minimal resolution $\wit\cS{}'_{N_j}$ of
$\cS'_{N_j}$ must coincide with the minimal resolution $\wit\cS_{N_j}$ of
$\cS_{N_j}$, hence $\cS'_{N_j}$ must be obtained from $\wit\cS_{N_j}$ by blowing
down all the exceptional divisor components except for the two outermost ones),
and hence $\varpi\colon{}^\circ_j\!\wit\Gr{}^{\unl{\unl\lambda}'}_G\to
{}^\circ_j\!\wit\Gr{}^{\unl{\unl\lambda}}_G$ is the blowup of
$^\circ_j\!\wit\Gr{}^{\unl{\unl\lambda}}_G$ along the closed subvariety
$^\circ_j\!\wit\Gr{}^{\unl{\unl\lambda}''}_G\subset
{}^\circ_j\!\wit\Gr{}_G^{\unl{\unl\lambda}}$.

We define $^\circ_j\!\wit\CW{}_\mu^{\unl{\unl\lambda}}:=
{}^\circ_j\!\wit\Gr{}_G^{\unl{\unl\lambda}}\times_{'\!\on{Bun}_G(\BP^1)}
\on{Bun}_B^{w_0\mu}(\BP^1),\ ^\circ_i\!\wit\CW{}_\mu^{\unl{\unl\lambda}}:=
{}^\circ_i\!\wit\Gr{}_G^{\unl{\unl\lambda}}\times_{'\!\on{Bun}_G(\BP^1)}
\on{Bun}_B^{w_0\mu}(\BP^1)$, and we define
$\varpi\colon{}^\circ_j\!\wit\CW{}_\mu^{\unl{\unl\lambda}'}\to
{}^\circ_j\!\wit\CW{}_\mu^{\unl{\unl\lambda}}\supset
{}^\circ_j\!\wit\CW{}_\mu^{\unl{\unl\lambda}''},\
\varpi\colon{}^\circ_i\!\wit\CW{}_\mu^{\unl{\unl\lambda}'}\to
{}^\circ_i\!\wit\CW{}_\mu^{\unl{\unl\lambda}}$ similarly. By the argument used in the
proof of~\ref{lem:CMBD}, the morphisms
$\wit\Gr{}^{\unl{\unl\lambda}}_G\stackrel{p\circ\bpi}{\longrightarrow}{}'
\!\on{Bun}_G(\BP^1)\leftarrow\on{Bun}_B^{w_0\mu}(\BP^1)$ are Tor-independent,
hence $\varpi\colon{}^\circ_j\!\wit\CW{}^{\unl{\unl\lambda}'}_\mu\to
{}^\circ_j\!\wit\CW{}^{\unl{\unl\lambda}}_\mu$ is the blowup of
$^\circ_j\!\wit\CW{}^{\unl{\unl\lambda}}_\mu$ along the closed subvariety
$^\circ_j\!\wit\CW{}^{\unl{\unl\lambda}''}_\mu\subset
{}^\circ_j\!\wit\CW{}_\mu^{\unl{\unl\lambda}}$, while
$\varpi\colon{}^\circ_i\!\wit\CW{}^{\unl{\unl\lambda}'}_\mu\to
{}^\circ_i\!\wit\CW{}^{\unl{\unl\lambda}}_\mu$ is an isomorphism (if $j\ne i$).

In case $j\ne i$, the open subvariety $(\BG_m^{\beta^*}\times\BA^1)_{\on{disj}}
\times_{\BA^{\alpha^*}}{}^\circ_i\!\wit\CW{}^{\unl{\unl\lambda}}_\mu
\subset(\BG_m^{\beta^*}\times\BA^1)_{\on{disj}}
\times_{\BA^{\alpha^*}}\wit\CW{}_\mu^{\unl{\unl\lambda}}$ coincides with the whole of
$(\BG_m^{\beta^*}\times\BA^1)_{\on{disj}}
\times_{\BA^{\alpha^*}}\wit\CW{}_\mu^{\unl{\unl\lambda}}$.
Hence the convolution morphism $\varpi\colon(\BG_m^{\beta^*}\times\BA^1)_{\on{disj}}
\times_{\BA^{\alpha^*}}\wit\CW{}^{\unl{\unl\lambda}'}_\mu\to
(\BG_m^{\beta^*}\times\BA^1)_{\on{disj}}
\times_{\BA^{\alpha^*}}\wit\CW{}^{\unl{\unl\lambda}}_\mu$ is an isomorphism.

In case $j=i$, the open subvariety $(\BG_m^{\beta^*}\times\BA^1)_{\on{disj}}
\times_{\BA^{\alpha^*}}{}^\circ_j\!\wit\CW{}^{\unl{\unl\lambda}}_\mu
\subset(\BG_m^{\beta^*}\times\BA^1)_{\on{disj}}
\times_{\BA^{\alpha^*}}\wit\CW{}_\mu^{\unl{\unl\lambda}}$ coincides with the whole of
$(\BG_m^{\beta^*}\times\BA^1)_{\on{disj}}
\times_{\BA^{\alpha^*}}\wit\CW{}_\mu^{\unl{\unl\lambda}}$. Furthermore,
the closed subvariety $(\BG_m^{\beta^*}\times\BA^1)_{\on{disj}}
\times_{\BA^{\alpha^*}}{}^\circ_j\!\wit\CW{}^{\unl{\unl\lambda}''}_\mu
\subset(\BG_m^{\beta^*}\times\BA^1)_{\on{disj}}
\times_{\BA^{\alpha^*}}\wit\CW{}_\mu^{\unl{\unl\lambda}}$ coincides with the
singular locus (with its reduced scheme structure) of
$(\BG_m^{\beta^*}\times\BA^1)_{\on{disj}}
\times_{\BA^{\alpha^*}}\wit\CW{}_\mu^{\unl{\unl\lambda}}$.
Arguing by induction, we conclude that
$(\BG_m^{\beta^*}\times\BA^1)_{\on{disj}}\times_{\BA^{\alpha^*}}\wit\CW{}^{\unl\lambda}_{\mu}$
coincides with $\on{Bl}_{\lfloor\frac{\langle\lambda,\alphavee_i\rangle}{2}\rfloor}$, where
$\on{Bl}_0:=(\BG_m^{\beta^*}\times\BA^1)_{\on{disj}}
\times_{\BA^{\alpha^*}}\oW^\lambda_{\mu}\cong
(\BG_m^{\beta^*}\times\BA^1)_{\on{disj}}\times_{\BA^{\beta^*}\times\BA^1}(\oZ^{\beta^*}
\times\cS_{\langle\lambda,\alphavee_i\rangle})$, and $\on{Bl}_b$ is the result of
blowup of $\on{Bl}_{b-1}$ at its singular locus,
$b=1,\ldots,\lfloor\frac{\langle\lambda,\alphavee_i\rangle}{2}\rfloor$.
Hence, $\on{Bl}_{\lfloor\frac{\langle\lambda,\alphavee_i\rangle}{2}\rfloor}\cong
(\BG_m^{\beta^*}\times\BA^1)_{\on{disj}}\times_{\BA^{\beta^*}\times\BA^1}(\oZ^{\beta^*}
\times\wit\cS_{\langle\lambda,\alphavee_i\rangle})$.

The proposition is proved.
\begin{NB}
  This is a record of an earlier manuscript, which was criticized by
  HN as isomorphisms of fibers may not extend to a global isomorphism.

Given~\ref{prop:factor} and the fact that
$\wit\cS_{\langle\lambda,\alphavee_i\rangle}$ is the minimal resolution of
$\cS_{\langle\lambda,\alphavee_i\rangle}$, it suffices to check two claims:

(i) $(\BG_m^{\beta^*}\times\BA^1)_{\on{disj}}\times_{\BA^{\alpha^*}}\wit\CW{}^{\unl\lambda}_{\mu}$
is smooth;

(ii) the fibers of $\bpi$ in the LHS of the desired isomorphism are the
same as the fibers of the minimal resolution in the RHS.

We denote the convolution diagram $\ol\Gr{}_G^{\omega_{i_1}}\wit\times\ldots
\wit\times\ol\Gr{}_G^{\omega_{i_N}}\to\ol\Gr{}_G^\lambda$ by
$\bpi\colon\wit\Gr{}_G^{\unl\lambda}\to\ol\Gr{}_G^\lambda$. Then just as
in~\ref{nondom}, we have $\wit\CW{}^{\unl\lambda}_{\mu}=
\wit\Gr{}^{\unl\lambda}_G\times_{'\!\on{Bun}_G(\BP^1)}\on{Bun}_B^{w_0\mu}(\BP^1)$.
Now (i) follows from the fact that $\wit\CW{}^{\unl\lambda}_{\mu}$ is smooth in
codimension $2$, that in turn follows by the argument used in the proof
of~\ref{lem:CMBD}: the morphisms
$\wit\Gr{}^{\unl\lambda}_G\stackrel{p\circ\bpi}{\longrightarrow}{}'
\!\on{Bun}_G(\BP^1)\leftarrow\on{Bun}_B^{w_0\mu}(\BP^1)$ are Tor-independent,
and $\wit\Gr{}^{\unl\lambda}_G$ is smooth in codimension $2$, since
$\ol\Gr{}^{\omega_{i_s}}_G$ is smooth in codimension $2$.

(ii) follows as well, since the fibers of
$\bpi\colon \wit\CW{}^{\unl\lambda}_\mu\to\ol\CW{}^\lambda_\mu$ are the same as the
fibers of $\bpi\colon\wit\Gr{}_G^{\unl\lambda}\to\ol\Gr{}_G^\lambda$, and the
latter fibers are known e.g.\ by~\cite{Lus-ast}.
\end{NB}
\end{proof}

\subsection{Determinant line bundles}
\label{dlb}
Note that we have a whole collection of morphisms from
$\wit\CW{}^{\unl\lambda}_\mu$ to $\Gr_G$: for $1\leq s\leq N$ we set
$p_s(\scP_0,\ldots,\scP_N,\sigma_1,\ldots,\sigma_N,\phi):=
(\scP_s,\sigma_s\circ\ldots\circ\sigma_1)$. Recall the determinant line bundle
$\CL$ on $\Gr_G$ (see e.g.~\ref{Determinant}). For $1\leq s\leq N$ we
define the relative determinant line bundle $\CalD_s$ on
$\wit\CW{}^{\unl\lambda}_\mu$ as $\CalD_s:=p_s^*\CL\otimes p_{s-1}^*\CL^{-1}$
(where $p_0^*\CL$ is understood as a trivial line bundle). For a collection of
integers $\varkappa=(k_1,\ldots,k_N)\in\BZ^N$, we define a line bundle
$\CalD^\varkappa$ on $\wit\CW{}^{\unl\lambda}_\mu$ as
$\bigotimes_{s=1}^N\CalD_s^{\otimes k_s}$. In other words, for the obvious
projection $\bp\colon \wit\CW{}^{\unl\lambda}_\mu\to\wit\Gr{}_G^{\unl\lambda}$
and similarly defined line bundle $\CalD^\varkappa_\Gr$ on the Grassmannian
convolution diagram $\wit\Gr{}_G^{\unl\lambda}$, we have
$\CalD^\varkappa=\bp^*\CalD^\varkappa_\Gr$. In particular,
$\CalD^{(1,1,\ldots,1)}=p_N^*\CL$ is trivial.

For $i\in Q_0$, we set
$N_i=\langle\lambda,\alphavee_i\rangle=\sharp\{s : \omega_{i_s}=\omega_i\}$.
We order the set of indices $s$ such that $\omega_{i_s}=\omega_i\colon
s^{(i)}_1<\ldots<s^{(i)}_{N_i}$.
We associate to $\varkappa\in\BZ^N$ a collection of coweights
$\varkappa^{(i)}=
\sum_{n=1}^{N_i-1}(k_{s^{(i)}_n}-k_{s^{(i)}_{n+1}})\omega_n,\ i\in Q_0$,
of $\PGL(W_i)$.
We will denote by $\Lambda^+_F\subset\BZ^N$ the set of all $\varkappa$
such that $k_{s^{(i)}_1}\geq k_{s^{(i)}_2}\geq\ldots\geq k_{s^{(i)}_{N_i}}$ for any
$i\in Q_0$. We will denote by $\Lambda^{++}_F\subset\Lambda^+_F$ the set of
all $\varkappa$ such that $k_{s_1}\geq k_{s_2}$ for any $1\leq s_1<s_2\leq N$.

\begin{NB3}
  I would like to make whether these assumptions are used or not.
\end{NB3}%

\begin{Proposition}
\label{prop:factoriz det}
The factorization isomorphism of~\ref{prop:factoriz} lifts to a canonical
(in the sense explained during the proof) isomorphism of line bundles
$$\left((\BG_m^{\beta^*}\times\BA^{\gamma^*})_{\on{disj}}\times_{\BA^{\alpha^*}}
\wit\CW{}^{\unl\lambda}_{\mu},\ \CO_{(\BG_m^{\beta^*}\times\BA^{\gamma^*})_{\on{disj}}}\otimes
\CalD^\varkappa\right)\iso$$ $$\left((\BG_m^{\beta^*}\times\BA^{\gamma^*})_{\on{disj}}
\times_{\BA^{\beta^*}\times\BA^{\gamma^*}}
(\oZ^{\beta^*}\times\wit\CW{}^{\unl\lambda}_{\lambda-\gamma}),\
\CO_{(\BG_m^{\beta^*}\times\BA^{\gamma^*})_{\on{disj}}}\otimes\CO_{\oZ^{\beta^*}}\boxtimes
\CalD^\varkappa\right).$$
\begin{NB3}
  This holds for any $\varkappa\in\ZZ^N$.
\end{NB3}%
\end{Proposition}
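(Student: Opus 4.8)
The plan is to reduce the statement about the full iterated convolution diagram $\wit\CW{}^{\unl\lambda}_\mu$ to the corresponding (already established) factorization isomorphism for the \emph{base} slice $\ol\CW{}^\lambda_\mu$, by pulling everything back along the projection $\bp\colon\wit\CW{}^{\unl\lambda}_\mu\to\wit\Gr{}_G^{\unl\lambda}$ and using that $\CalD^\varkappa=\bp^*\CalD^\varkappa_\Gr$ is pulled back from the Grassmannian convolution diagram. Concretely: first I would recall from \ref{prop:factoriz} that the underlying factorization isomorphism of varieties is already in hand, together with its compatibility with the zastava factorization under $s^\lambda_\mu\circ\bpi$. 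So the only thing to produce is the lift to line bundles, and it suffices to produce it for $\CalD^\varkappa_\Gr$ on the Grassmannian side, since both $\wit\CW{}$-sides are fibre products of the respective $\wit\Gr{}$-sides over $'\!\on{Bun}_G(\BP^1)$ with $\on{Bun}_B^{w_0\mu}(\BP^1)$ (as in \ref{nondom}), and the factorization isomorphism of \ref{prop:factoriz} is compatible with these presentations.

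The key point is that the determinant line bundle $\CL$ on $\Gr_G$ is itself factorizable: over $(\BG_m^{\beta^*}\times\BA^{\gamma^*})_{\on{disj}}$ the Beilinson--Drinfeld Grassmannian splits as a product, and $\CL$ restricts to $\CL\boxtimes\CL$ under this splitting, canonically once one fixes (as is standard, and as the determinant construction does) the trivialization of $\CL$ at the trivial bundle $\scP_{\on{triv}}$. Applying this to each of the $N$ convolution steps $p_s$, and using $\CalD_s=p_s^*\CL\otimes p_{s-1}^*\CL^{-1}$, one gets for each $s$ a canonical isomorphism of $\CalD_s$ with $\CO_{\oZ^{\beta^*}}\boxtimes\CalD_s$ on the factorized product (the $\oZ^{\beta^*}$-factor receiving only the trivial contribution, because on that factor all of $\scP_0,\dots,\scP_N$ are identified via $\sigma_s$ over the relevant disc and the successive determinant lines cancel telescopically). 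Taking the tensor product $\bigotimes_s\CalD_s^{\otimes k_s}=\CalD^\varkappa$ then yields the asserted isomorphism, and the word ``canonical'' in the statement refers precisely to this choice — the trivialization of $\CL$ at $\scP_{\on{triv}}$ together with the telescoping cancellation on the $\oZ^{\beta^*}$-factor.

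After that, the remaining work is bookkeeping: check that this line-bundle isomorphism covers the variety isomorphism of \ref{prop:factoriz} (automatic, since it is built by pulling back the same factorization datum of $\Gr_G$ that underlies that isomorphism), and check compatibility with the zastava factorization so that the statement is internally consistent with the sentence already proved in \ref{prop:factoriz}. The main obstacle I anticipate is making the cancellation on the $\oZ^{\beta^*}$-factor genuinely canonical rather than merely canonical-up-to-scalar: one must be careful that the factorization isomorphisms for $\CL$ at the $s$-th and $(s-1)$-st steps are induced by \emph{the same} splitting of the BD Grassmannian, so that in $\CalD_s=p_s^*\CL\otimes p_{s-1}^*\CL^{-1}$ the two scalar ambiguities are literally inverse to each other and cancel. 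This is where I would spend the real care; once it is set up correctly, the $\oZ^{\beta^*}$-factor carries the trivial bundle on the nose, and there is no residual scalar. (This is the convolution-diagram analogue, with determinant twists, of the zastava factorization compatibility used in \ref{prop:factoriz}.)
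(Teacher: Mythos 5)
Your decisive observation is the right one, and it is in fact the whole of the paper's proof: the factorization isomorphism of \ref{prop:factoriz} sends $(\scP_0,\ldots,\scP_N,\sigma_1,\ldots,\sigma_N,\phi)$ to a first-factor datum with $\scP_0^{(1)}=\cdots=\scP_N^{(1)}$ and $\sigma_s^{(1)}=\on{id}$ (only the $B$-structure varies there) together with a second-factor datum $(\scP_\bullet^{(2)},\sigma_\bullet^{(2)},\phi^{(2)})$, so the relative determinant of $\scP_s$ and $\scP_{s-1}$ is canonically the relative determinant of $\scP_s^{(2)}$ and $\scP_{s-1}^{(2)}$, and $\CalD_s\cong\CO_{\oZ^{\beta^*}}\boxtimes\CalD_s$ on the nose; tensoring gives the claim for $\CalD^\varkappa$. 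However, the scaffolding you build around this --- the splitting of the Beilinson--Drinfeld Grassmannian over $(\BG_m^{\beta^*}\times\BA^{\gamma^*})_{\on{disj}}$ and the factorization $\CL\mapsto\CL\boxtimes\CL$ --- is not the structure actually present here: all the modifications $\sigma_s$ have their poles at the fixed point $0\in\BP^1$ throughout, so there is no BD-type spreading of modification points over the configuration space; what factorizes over $\BA^{\alpha^*}$ is the $B$-structure (the zastava direction), and the maps $p_s$ land in the ordinary $\Gr_G$ based at $0$. Once one works directly with the relative determinants $\CalD_s$ of pairs of bundles identified off $0$ (rather than with absolute determinant lines and chosen trivializations), the scalar ambiguities you anticipate having to cancel telescopically never arise, and the ``canonicity'' is immediate. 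So the proof is essentially the paper's, but you should strip out the BD-factorization framing, which is a misidentification of the relevant geometry rather than an alternative route.
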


\begin{NB}\linelabel{NB:HNq}
  I hope that the canonicity will be made into an actual condition
  like in l.\lineref{NB:vague}.
\end{NB}%

\begin{proof}
The factorization isomorphism of~\ref{prop:factoriz} associates to the data
of $(\scP_0,\ldots,\\ \scP_N,\sigma_1,\ldots,\sigma_N,\phi)$ the data of
$(\scP_0^{(1)}=\ldots=\scP_N^{(1)},\sigma_1^{(1)}=\ldots=\sigma_N^{(1)}=\on{id},
\phi^{(1)})$ and
$(\scP_0^{(2)},\ldots,\scP_N^{(2)},\sigma_1^{(2)},\ldots,\sigma_N^{(2)},\phi^{(2)})$.
By construction, the relative determinant of $\scP_s$ and $\scP_{s-1}$
coincides with the relative determinant of $\scP_s^{(2)}$ and $\scP_{s-1}^{(2)}$.
\end{proof}

\begin{NB}
To study the above line bundles, we recall the setup and notations
of~\ref{divisors}. We will denote the divisor $(\iota^\lambda_\mu)^{-1}(\oE'_i)$
by $\oE''_i\subset\ol\CW{}^\lambda_\mu$ for short. The following description of
$\oE''_i$ was used in the proof of~\ref{lem:dete}. We have a diagram of
morphisms $\ol\CW{}^\lambda_\mu\stackrel{\bp}{\longrightarrow}
\ol\Gr{}^\lambda_G\supset\Gr^\lambda_G\stackrel{\on{pr}}{\longrightarrow}\CB$.
Here as in the above mentioned proof we assume that $\lambda$ is regular
(the general case is similar but requires introducing more notations, e.g.\
for partial flag varieties). The complement to the open $B_-$-orbit in $\CB$
is the union of Schubert divisors $D^-_i\subset\CB,\ i\in Q_0$, and
$\oE''_i$ is the closure of the preimage $\bp^{-1}\on{pr}^{-1}(D^-_i)$.
Now the convolution morphism
$\bpi\colon\wit\Gr{}_G^{\unl\lambda}\to\ol\Gr{}_G^\lambda$ is one-to-one over
$\Gr_G^\lambda\subset\ol\Gr{}_G^\lambda$, and we have
$\CalD^\varkappa_\Gr|_{\Gr_G^\lambda}\cong
\CO_{\Gr_G^\lambda}(\sum_{s=1}^Nk_s\on{pr}^*D_{i_s}^-)$. This isomorphism is unique
up to an invertible constant since $\Gamma(\Gr^\lambda_G,\CO^\times)=\BC^\times$.

$\CL^\varkappa_\Gr|_{\Gr_G^\lambda}\cong\CO_{\Gr_G^\lambda}
(\sum_{i\in Q_0}\sum_{n=1}^{N_i}\chi_n^{(i)}\on{pr}^*D^-_i)$ as can be seen by
comparing the $T$-weights in the fibers of both sides at the $T$-fixed points.
\begin{NB2}
    HN : Is the isomorphism unique ? It seems so as $\Gr_G^\lambda$ is
    a vector bundle over $G/P_\lambda$. And if this computation is
    correct (I would like to see the detail), then I am puzzled as
    $k_N$ seems to be irrelevant to $\cL^\varkappa$ when we pull back
    the line bundle to $\wit\CW{}^{\unl\lambda}_\mu$.
\end{NB2}%
Here for $i\in Q_0$, we set
$N_i=\langle\lambda,\alphavee_i\rangle=\sharp\{s : \omega_{i_s}=\omega_i\}$;
we order the set of indices $s$ such that $\omega_{i_s}=\omega_i\colon
s^{(i)}_1<\ldots<s^{(i)}_{N_i}$, and define a partition
$\chi^{(i)}=(\chi^{(i)}_1\geq\chi^{(i)}_2\geq\ldots\geq\chi^{(i)}_{N_i})$ as follows:
$\chi^{(i)}_n:=\sum_{m\geq n}k_{s^{(i)}_m}$. We will store all the partitions
$\chi^{(i)},\ i\in Q_0$, as a sequence $\chi\in\BN^N$: for $1\leq s\leq N$ we
find $i\in Q_0$ and $n\leq N_i$ such that $s=s_n^{(i)}$, and set
$\chi_s=\chi^{(i)}_n$. We will write $\chi=\chi(\varkappa)$. Note that
$\varkappa\mapsto\chi(\varkappa)$ is an embedding $\BN^N\hookrightarrow\BN^N$;
we denote the image by $\Lambda^+_F\subset\BN^N$, and we denote the inverse
bijection $\Lambda^+_F\iso\BN^N$ by $\chi\mapsto\varkappa(\chi)$.
\begin{NB2}
  HN:

  The sequence $\chi^{(i)}$ will be identified with a part of the
  cocharacter $\chi$ of $G_F$, corresponding to $\GL(W_i)$. Therefore
  the construction should be independent of over all shift of
  $\chi^{(i)}_n$ independent of $n$ \emph{and} $i$. On the other hand,
  the most of argument works for a shift of $\chi^{(i)}_n$
  \emph{depending on} $i$. In particular, it is not clear where in the
  current argument one cannot shift $\chi^{(i)}_n$ depending on $n$.

  In the current definition of $\chi^{(i)}$, a shift of $\chi^{(i)}_n$
  depending on $n$ corresponds to adding $s_i$ to $k_{s_{N_i}^{(i)}}$
  (the last entry for $i$). If $s_i$ is independent of $i$, it is an
  overall shift.

  On the other hand, only $k_N$ is irrelevant in the definition of
  $\cL^\varkappa$.
\end{NB2}%

In particular, we obtain a trivialization
$\CL^\varkappa_\Gr|_{\Gr^\lambda_G\setminus\bigcup_{i\in Q_0}\on{pr}^{-1}(D^-_i)}\cong
\CO_{\Gr^\lambda_G\setminus\bigcup_{i\in Q_0}\on{pr}^{-1}(D^-_i)}$, and
$\CL^\varkappa|_{\CW^\lambda_\mu\setminus\bigcup_{i\in Q_0}\overset{\circ}{E}{}''_i}\cong
\CO_{\CW^\lambda_\mu\setminus\bigcup_{i\in Q_0}\overset{\circ}{E}{}''_i}$. Here $\CW^\lambda_\mu:=
\Gr^\lambda_G\times_{'\!\on{Bun}_G(\BP^1)}\on{Bun}_B^{w_0\mu}(\BP^1)$ is an open
subvariety of $\ol\CW{}^\lambda_\mu$, and the convolution morphism
$\bpi\colon \wit\CW{}^{\unl\lambda}_\mu\to\ol\CW{}^\lambda_\mu$ is
one-to-one over $\CW^\lambda_\mu$. Note that
$\Gr^\lambda_G\setminus\bigcup_{i\in Q_0}\on{pr}^{-1}(D^-_i)=
\Gr^\lambda_G\cap T_{w_0\lambda}=\ol\Gr{}^\lambda_G\cap T_{w_0\lambda}=
\ol\Gr{}^\lambda_G\setminus\bigcup_{i\in Q_0}\ol{\on{pr}^{-1}(D^-_i)}$
(open intersection with a semiinfinite orbit). Hence
$\ol\CW{}^\lambda_\mu\setminus\bigcup_{i\in Q_0}\oE''_i=
\CW^\lambda_\mu\setminus\bigcup_{i\in Q_0}\oE''_i$. Now recall from~\ref{lem:full}
that the restriction of $s^\lambda_\mu\colon\ol\CW{}^\lambda_\mu\to Z^{\alpha^*}$
to $\ol\CW{}^\lambda_\mu\setminus(\bigcup_{i\in Q_0}\oE''_i\cup\bigcup_{i\in Q_0}\oE_i)$
establishes its isomorphism with
$\bpi_{\alpha^*}^{-1}(\BG_m^{\alpha^*})\subset\oZ^{\alpha^*}$.
All in all, we obtain a trivialization $\tau$ of $\CL^\varkappa$ restricted to
$(\bpi_{\alpha^*}\circ s^\lambda_\mu\circ\bpi)^{-1}\BG_m^{\alpha^*}
\subset\wit\CW{}^{\unl\lambda}_\mu$.
\end{NB}%

We consider the Kleinian surface resolution
$\wit\cS_{N_i}\stackrel{\bpi}{\longrightarrow}\cS_{N_i}
\stackrel{\varPi}{\longrightarrow}\BA^1$ with a line bundle $\CL_{\varkappa^{(i)}}$.
See \ref{line Klein}.
\begin{NB3}
  Reference added on June 12 by HN.
\end{NB3}%
\begin{NB}
Recall that $\cS_{N_i}$ is given by the equation $xy=w^{N_i}$, and $\varPi$ is
the projection onto the line with $w$ coordinate. Note that
$\bpi$ is one-to-one over $\varPi^{-1}(\BG_m)\subset\cS_{N_i}$, and the
restriction of $\CL_{\chi^{(i)}}$ to $\bpi^{-1}\varPi^{-1}(\BG_m)$ is trivialized
since $\CL_{\chi^{(i)}}^{\otimes N_i}=\CO_{\wit\cS_{N_i}}(D)$ for a divisor $D$ supported
on $\bpi^{-1}(0)$, but $\bpi^{-1}(0)\cap\bpi^{-1}\varPi^{-1}(\BG_m)=\emptyset$.
Tensoring with the identity automorphism of $\CO_{\oZ^{\beta^*}}$ we obtain a
trivialization $$\tau'\colon \left(\bpi_{\beta^*}^{-1}(\BG_m^{\beta^*})\times
\bpi^{-1}\varPi^{-1}(\BG_m),\ \CO\right)\iso
\left(\bpi_{\beta^*}^{-1}(\BG_m^{\beta^*})\times\bpi^{-1}\varPi^{-1}(\BG_m),\
\CO_{\bpi_{\beta^*}^{-1}(\BG_m^{\beta^*})}\boxtimes\CL_{\chi^{(i)}}\right).$$
\end{NB}

\begin{Corollary}
\label{cor:factoriz det}
The factorization isomorphism of~\ref{lem:factoriz}
lifts to a canonical (in the sense explained during the proof) isomorphism of
line bundles
$$\left((\BG_m^{\beta^*}\times\BA^1)_{\on{disj}}\times_{\BA^{\alpha^*}}
\wit\CW{}^{\unl\lambda}_{\mu},\ \CO_{(\BG_m^{\beta^*}\times\BA^1)_{\on{disj}}}\otimes
\CalD^\varkappa\right)\iso$$
$$\left((\BG_m^{\beta^*}\times\BA^1)_{\on{disj}}\times_{\BA^{\beta^*}\times\BA^1}
(\oZ^{\beta^*}\times\wit\cS_{N_i}),\ \CO_{(\BG_m^{\beta^*}\times\BA^1)_{\on{disj}}}\otimes
\CO_{\overset{\circ}{Z}{}^{\beta^*}}\boxtimes\CL_{\varkappa^{(i)}}\right).$$
\begin{NB3}
  This holds for any $\varkappa\in\ZZ^N$.
\end{NB3}%
\end{Corollary}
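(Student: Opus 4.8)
The plan is to deduce the corollary from Proposition~\ref{prop:factoriz det} by specializing to $\gamma=\alpha_i$, $\beta=\alpha-\alpha_i$ (so that $\BA^{\gamma^*}=\BA^1$), and then matching, under the identification of Proposition~\ref{lem:factoriz}, the line bundle $\CalD^\varkappa$ carried by the factor $\wit\CW{}^{\unl\lambda}_{\lambda-\alpha_i}$ with $\CL_{\varkappa^{(i)}}$ on $\wit\cS_{N_i}$. First I would invoke Proposition~\ref{prop:factoriz det}: it provides a canonical isomorphism of line bundles over $(\BG_m^{\beta^*}\times\BA^1)_{\on{disj}}$ lifting the factorization isomorphism of Proposition~\ref{prop:factoriz} and sending $\CO\otimes\CalD^\varkappa$ on $(\BG_m^{\beta^*}\times\BA^1)_{\on{disj}}\times_{\BA^{\alpha^*}}\wit\CW{}^{\unl\lambda}_\mu$ to $\CO\otimes\CO_{\oZ^{\beta^*}}\boxtimes\CalD^\varkappa$ on $(\BG_m^{\beta^*}\times\BA^1)_{\on{disj}}\times_{\BA^{\beta^*}\times\BA^1}(\oZ^{\beta^*}\times\wit\CW{}^{\unl\lambda}_{\lambda-\alpha_i})$, the second $\CalD^\varkappa$ being built from the relative determinant bundles $\CalD_s=p_s^*\CL\otimes p_{s-1}^*\CL^{-1}$ of the convolution diagram $\wit\CW{}^{\unl\lambda}_{\lambda-\alpha_i}$. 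The factorization isomorphism of Proposition~\ref{lem:factoriz} is, by its very construction, this isomorphism of Proposition~\ref{prop:factoriz} followed by the identification $\wit\CW{}^{\unl\lambda}_{\lambda-\alpha_i}\iso\wit\cS_{N_i}$ read off from the proof of Proposition~\ref{lem:factoriz} (there $\wit\CW{}^{\unl\lambda}_{\lambda-\alpha_i}$ was realized, over $\bZ^{\alpha^*}$, as the iterated blowup of $\oW^\lambda_{\lambda-\alpha_i}\cong\cS_{N_i}$ along its singular locus, i.e.\ as the minimal resolution). So it remains to identify $\CalD^\varkappa$ with $\CL_{\varkappa^{(i)}}$ under this identification.

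Since $\CalD^\varkappa=\bp^*\CalD^\varkappa_\Gr$ is pulled back from the Grassmannian convolution diagram $\wit\Gr{}^{\unl\lambda}_G$, and the blowup identification above is likewise a statement about $\wit\Gr{}^{\unl\lambda}_G$ (\'etale-locally the relevant open subvariety of $\wit\Gr{}^{\unl{\unl\lambda}}_G$ splits as a smooth factor times $\cS_{N_i}$, with the convolution morphism of the form $\on{Id}\times\ol\varpi$, $\ol\varpi$ a partial resolution of $\cS_{N_i}$, as in the proof of Proposition~\ref{lem:factoriz}), the problem reduces to computing the restriction of $\CalD^\varkappa_\Gr$ to this $\cS_{N_i}$-factor and to its resolution $\wit\cS_{N_i}$. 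By the rank-one reduction of~\cite[Section~3]{mov} this reduces to $G$ of type $A_1$, where line bundles on the partial resolution of $\cS_{N_i}$ are classical (cf.~\cite{MR1968260} and~\ref{line Klein}). Concretely, one finds: for $s$ with $i_s\ne i$ the step $\sigma_s$ is an isomorphism over $\bZ^{\alpha^*}$, hence $p_{s-1}=p_s$ and $\CalD_s\cong\CO$; and for the $n$-th step with $i_s=i$, $s=s^{(i)}_n$, the relative determinant $\CalD_{s^{(i)}_n}$ restricts to $\CL_{\omega_n}\otimes\CL_{\omega_{n-1}}^{-1}$ on $\wit\cS_{N_i}$ under the conventions $\CL_{\omega_0}=\CL_{\omega_{N_i}}=\CO_{\wit\cS_{N_i}}$ (equivalently, $\CalD_{s^{(i)}_n}$ has degree $\delta_{mn}-\delta_{m,n-1}$ on the $m$-th exceptional curve of $\wit\cS_{N_i}\to\cS_{N_i}$). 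A useful consistency check is that the telescoping product $\bigotimes_{n=1}^{N_i}\CalD_{s^{(i)}_n}$ equals $p_N^*\CL=\CalD^{(1,\dots,1)}$, which is trivial.

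Granting this, the corollary follows by a telescoping computation:
\[
  \CalD^\varkappa|_{\wit\cS_{N_i}}
  \cong\bigotimes_{n=1}^{N_i}\bigl(\CL_{\omega_n}\otimes\CL_{\omega_{n-1}}^{-1}\bigr)^{\otimes k_{s^{(i)}_n}}
  \cong\bigotimes_{n=1}^{N_i-1}\CL_{\omega_n}^{\otimes(k_{s^{(i)}_n}-k_{s^{(i)}_{n+1}})}
  =\CL_{\varkappa^{(i)}},
\]
where we used $\CL_{\omega_0}=\CL_{\omega_{N_i}}=\CO$ and the formula $\CL_\lambda=\bigotimes_n\CL_{\omega_n}^{\otimes(\lambda_n-\lambda_{n+1})}$ recalled in~\ref{line Klein}, applied to $\lambda=\varkappa^{(i)}$. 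Composing with the isomorphism of the first paragraph and tensoring by $\CO_{\oZ^{\beta^*}}$ gives the asserted isomorphism; like that of Proposition~\ref{prop:factoriz det}, it is canonical once one fixes the natural trivialization of each $\CL_{\omega_n}$ over $\varPi^{-1}(\BG_m)\subset\cS_{N_i}$, and this is the meaning of ``canonical'' here. The main obstacle is the line-bundle matching of the second paragraph --- identifying the relative determinant $\CalD_{s^{(i)}_n}$ with $\CL_{\omega_n}\otimes\CL_{\omega_{n-1}}^{-1}$ on the Kleinian resolution --- as it is the one non-formal input and requires the rank-one analysis; the remaining steps (specializing Proposition~\ref{prop:factoriz det} and the telescoping) are routine.
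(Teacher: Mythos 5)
Your proposal follows essentially the same route as the paper: reduce via Proposition~\ref{prop:factoriz det} to constructing an isomorphism $(\wit\CW{}^{\unl\lambda}_{\lambda-\alpha_i},\CalD^\varkappa)\iso(\wit\cS_{N_i},\CL_{\varkappa^{(i)}})$, reduce that to rank one by the argument of~\cite[Section~3]{mov}, identify $\CalD_{s^{(i)}_n}$ with $\CL_{\omega_n}\otimes\CL_{\omega_{n-1}}^{-1}$, and telescope. The only place where you do less than the paper is the step you yourself flag as the non-formal input: you assert the identification $\CalD_{s}\cong\CL_{\omega_s}\otimes\CL_{\omega_{s-1}}^{-1}$ (equivalently the degrees $\delta_{mn}-\delta_{m,n-1}$ on the exceptional curves) and refer to~\cite{MR1968260}, whereas the paper actually proves it in rank one by an explicit computation: it realizes $\wit\CW$ inside the lattice model of $\Gr^{\omega}_{\GL(2)}\wit\times\cdots\wit\times\Gr^{\omega}_{\GL(2)}$, lists the $T$-fixed points $p_0,\dots,p_{N-1}$, computes the fiber of $\CalD_s$ at each, and matches the resulting $\BC^\times$-characters (after identifying $\BC^\times/\sqrt[N]{1}$ with $T_1\subset\PGL(2)$) with those of $\CL_{\omega_s-\omega_{s-1}}$ at the fixed points of $\wit\cS_N$, up to an overall twist. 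That fixed-point comparison is also what pins the isomorphism down up to an invertible constant, which is the precise content of ``canonical'' in the statement; your alternative normalization via trivializations over $\varPi^{-1}(\BG_m)$ is in the same spirit but is not how the paper phrases it. So the structure is right and the asserted facts are correct, but to be complete you would need to supply the fixed-point (or exceptional-curve degree) computation rather than cite it.
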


\begin{NB}\linelabel{NB:HNq2}
  The canonicity should be fixed when an answer to l.\lineref{NB:HNq}
  will be given.

  Anyway the proof is not precise, as you do not define the torus
  action on the line bundle $\CalD^\varkappa$ before.
\begin{NB2} I hope to type down the calculation of the torus action in the
fixed points when I am back to Moscow in the end of August.
\end{NB2}

I complained as there is no explanation of an action on
$\wit\CW{}^{\unl\lambda}_{\lambda-\alpha_i}$, as far as I read.
\end{NB}%

\begin{proof}
Due to~\ref{prop:factoriz det}, it suffices to construct an isomorphism
$(\wit\CW{}^{\unl\lambda}_{\lambda-\alpha_i},\CalD^\varkappa)\iso
(\wit\cS_{N_i},\CL_{\varkappa^{(i)}})$. This reduces to the case of rank $1$ by the
argument of~\cite[Section~3]{mov}. In rank $1$ we compare the weights of
the Cartan torus in the fixed points.

Namely, $G=\GL(2),\ \omega$ is the fundamental weight $(1,0),\ \unl{\lambda}$
is a sequence $(\omega,\ldots,\omega)\ (N$ times), $\alpha=(1,-1)$ is the
simple root, $\lambda=N\omega=(N,0),\ \lambda-\alpha=(N-1,1)$, and we will write
$\wit\CW$ for $\wit\CW{}^{\unl\lambda}_{\lambda-\alpha}$. Then $\wit\CW$ is a locally
closed subvariety of the convolution diagram
$\Gr^\omega_G\wit{\times}\ldots\wit{\times}\Gr^\omega_G\ (N$ times).
The latter convolution diagram is the moduli space of flags of lattices
$L_0\supset L_1\supset\ldots\supset L_N$ where $L_0=V\otimes\BC[[z]],\
V=\BC e_1\oplus\BC e_2$, and $\dim L_n/L_{n+1}=1$ for any $n=0,\ldots,N-1$.
The fixed points $\wit\CW{}^T=\{p_0,\ldots,p_{N-1}\}$ (where
$T\subset\GL(2)=\GL(V)$ is the diagonal torus) are as follows:
$p_r=(L_0^{(r)}\supset\ldots\supset L_N^{(r)})$
where $L_n^{(r)}$ is spanned by $z^ne_1,e_2$ for $0\leq n<r$, and by
$z^{n-1}e_1,ze_2$ for $r\leq n\leq N-1$. In particular, $L_0^{(r)}=L_0$, and
$L_N^{(r)}=z^{N-1}\BC[[z]]e_1\oplus z\BC[[z]]e_2$. The fiber of $\CalD_s$ at $p_r$
is $\BC z^{s-1}e_1$ for $1\leq s\leq r$, and $\BC e_2$ for $s=r+1$, and
$\BC z^{s-2}e_1$ for $r+1<s\leq N$.
\begin{NB} The loop rotation $\BC^\times$ acts on $\CalD_s$
naturally: the character of a fiber isomorphic to $\BC z^le_{1,2}$ is $q^l$.
\end{NB}%
Let $T_1$ be the image of $T\subset\GL(2)$ in $\on{PGL}(2)$. The natural action
of $T$ on the convolution diagram factors through $T_1$, and the action of $T_1$
lifts to an action on $\CalD_s$: the character of the fiber (at a fixed point)
isomorphic to $\BC z^le_1$ (resp.\ $\BC z^le_2$) is $1$ (resp.\ $x_1^{-1}$).
Here $x_1$ is the generator of $X^*(T_1)$.
Recall the action of $\BC^\times\times\BC^\times$ on $\wit\cS_N$
in~\ref{line Klein}. We will be interested in the action of the first copy
of $\BC^\times$. It factors through the quotient modulo the subgroup of
$N$-th roots of unity: $\BC^\times\twoheadrightarrow\BC^\times/\sqrt[N]{1}$.
We identify $\BC^\times/\sqrt[N]{1}$ with $T_1$ so that the pullback of
$x_1\in X^*(T_1)$ to $\BC^\times$ coincides with $x^N$. Then the identification
$\wit\cS_N\simeq\wit\CW$ is $\BC^\times\twoheadrightarrow T_1$-equivariant,
it takes $p_r\in\wit\cS_N$ to $p_r\in\wit\CW$,
and the characters of $\BC^\times$ in the fibers of $\CL_{\omega_s-\omega_{s-1}}$
and $\CalD_s$ at the respective fixed points in $\wit\cS_N$ and $\wit\CW$ match
up to an overall twist (independent of a fixed point) by the character $x$ of
$\BC^\times$.

This defines the desired isomorphism
$(\wit\CW,\CalD_s)\iso(\wit\cS_N,\CL_{\omega_s-\omega_{s-1}})$ up to
multiplication by an invertible constant, and hence
$(\wit\CW,\CalD^\varkappa)\iso(\wit\cS_N,\CL_\varkappa)$ (also up to multiplication
by an invertible constant).
This is the only ambiguity in the choice of isomorphism of corollary.
\begin{NB}
Like in the proof of~\ref{prop:factor}, it suffices to prove the claim over
$\bZ^{\alpha^*}$. So we restrict to this open subset without further mentioning
this and introducing new notations for the corresponding open subsets in the
convolution diagrams over slices. We denote the irreducible components of the
exceptional divisor of
$$(\BG_m^{\beta^*}\times\BA^1)_{\on{disj}}\times_{\BA^{\alpha^*}}
\wit\CW{}^{\unl\lambda}_{\mu}\stackrel{\bpi}{\longrightarrow}
(\BG_m^{\beta^*}\times\BA^1)_{\on{disj}}\times_{\BA^{\alpha^*}}
\ol\CW{}^{\unl\lambda}_{\mu}$$ by $E^\CW_j,\ 1\leq j\leq N_i-1$; they correspond
to the irreducible components of the exceptional divisor of
$$(\BG_m^{\beta^*}\times\BA^1)_{\on{disj}}\times_{\BA^{\beta^*}\times\BA^1}
(\oZ^{\beta^*}\times\wit\cS_{N_i})\stackrel{\bpi}{\longrightarrow}
(\BG_m^{\beta^*}\times\BA^1)_{\on{disj}}\times_{\BA^{\beta^*}\times\BA^1}
(\oZ^{\beta^*}\times\cS_{N_i})$$ denoted by $E^\cS_j$, cf.~\ref{line Klein}.
The unit section of the structure sheaf of
$\bpi_{\beta^*}^{-1}(\BG_m^{\beta^*})\times\bpi^{-1}\varPi^{-1}(\BG_m)$ under
isomorphism $\tau'$ vanishes to the order $\chi^{(i)}_j-\chi^{(i)}_{j+1}$
at $E^\cS_j$. We have to check that the unit section of the structure
sheaf of $(\bpi_{\alpha^*}\circ s^\lambda_\mu\circ\bpi)^{-1}\BG_m^{\alpha^*}$ under
isomorphism $\tau$ vanishes to the order $\chi^{(i)}_j-\chi^{(i)}_{j+1}=k_{s_j^{(i)}}$
at $E^\CW_j$. By the construction of $\tau$, the latter claim can be checked
for the trivialization of $\CL^\varkappa_\Gr$ on
$\Gr^\lambda_G\setminus\bigcup_{i\in Q_0}\on{pr}^{-1}(D^-_i)$ and the irreducible
component $E^{\Gr,i}_j$ of the exceptional divisor of
$\wit\Gr{}^{\unl\lambda}\stackrel{\bpi}{\longrightarrow}\ol\Gr{}^\lambda$.
This check reduces to the case of rank $1$ by the argument
of~\cite[Section~3]{mov}. In rank $1$ it follows e.g.\ from~\cite{MR1968260}.
\end{NB}%
\end{proof}

\subsection{Sections of determinant line bundles}
\label{sdlb}
For $1\leq s\leq N$, we set $\lambda_s:=\omega_{i_1}+\ldots+\omega_{i_s}$.
Then the projection $p_s\colon \wit\CW{}^{\unl{\lambda}}_\mu\to\Gr_G$ lands into
$\ol\Gr{}_G^{\lambda_s}$. The determinant line bundle
$\CL|_{\ol\Gr{}_G^{\lambda_s}}\simeq\CO_{\ol\Gr{}_G^{\lambda_s}}(\sum_{i\in Q_0}\langle\lambda_s,
\alphavee_i\rangle S_{\lambda_s-\alpha_i}\cap\ol\Gr{}_G^{\lambda_s})$ has a canonical
section $z_{\lambda_s}$ vanishing to the order $\langle\lambda_s,\alphavee_i\rangle$ at
the semiinfinite orbit $S_{\lambda_s-\alpha_i}$ intersecting $\ol\Gr{}_G^{\lambda_s}$
in codimension 1. For $\varkappa=(k_1\geq\ldots\geq k_N)\in\Lambda^{++}_F$, the
line bundle $\CalD^\varkappa=\bigotimes_{s=1}^Np_s^*\CL^{\otimes (k_s-k_{s+1})}$ (we set
$k_{N+1}=0$)
\begin{NB3}
  Changed to $\otimes$ on June 12 by HN. We used
  $\CalD^\varkappa = \bigotimes_{s=1}^N (p_s^*\CL\otimes
  p_{s-1}^*\CL^{-1})^{\otimes k_s} = \bigotimes_{s=1}^N
  p_s^*\CL^{\otimes (k_s - k_{s+1})}$.
\end{NB3}%
has a section $z^\varkappa:=\bigotimes_{s=1}^Np_s^*z_{\lambda_s}^{k_s-k_{s+1}}$.
In particular, recall that $\CalD^{(1,1,\dots,1)} = p_N^*\CL$ is trivial, but the section
$z^{(1,1,\dots,1)} = p_N^* z_{\lambda_N}\ne1$ since it vanishes along some semiinfinite orbits.
\begin{NB3}
  Recall that $\CalD^{(1,1,\dots,1)} = p_N^*\CL$ is trivial. On the
  other hand, we have $z^{(1,1,\dots,1)} = p_N^* z_{\lambda_N}$.
  But I guess that $z_{\lambda_N}$ is \emph{not} $1$, right ? \linelabel{zlN}
  \begin{NB2}
    Inserted in the text.
    \end{NB2}%
\end{NB3}%


\subsection{Example}
\label{example}
We consider $G=\on{SL}(3),\ \mu=0,\ \unl{\lambda}=(\omega_j,\omega_i),\
\lambda=\omega_i+\omega_j=\alpha_i+\alpha_j$. The slice $\ol\CW{}^\lambda_\mu$
is the closure of the minimal nilpotent orbit in ${\mathfrak{sl}}_3$, and
$\widetilde\CW{}^{\unl{\lambda}}_\mu$ is the cotangent bundle $T^*\BP^2$ where
$\BP^2=\BP(V)$, and $V$ has a basis $b_1,b_2,b_3$, and $V^*$ has the dual basis
$a_1,a_2,a_3$. We assume that these bases are eigenbases for a Cartan torus $T$,
and the weight of $a_1$ equals $\omega_i,\ \wt(a_2)=\omega_i-\alpha_i,\
\wt(a_3)=-\omega_j$. The zastava $Z^\lambda$ is given by equation
$y_iy_j=(w_i-w_j)y_{j,i}$, and the open zastava $\oZ^\lambda\subset Z^\lambda$ is
given by $y_{j,i}\ne0$. The weights
$\wt(y_i)=\alpha_i,\ \wt(y_j)=\alpha_j,\ \wt(w_i)=\wt(w_j)=0,\
\wt(y_{j,i})=\lambda$.

We have the canonical projections
$\widetilde\CW{}^{\unl{\lambda}}_\mu\to\ol\CW{}^\lambda_\mu\to Z^\lambda$, and a section
$\oZ^\lambda\hookrightarrow\ol\CW{}^\lambda_\mu$. We consider the incidence
quadric $Q\subset V\times V^*$ given by $a_1b_1+a_2b_2+a_3b_3=0$. Its categorical
quotient modulo the `hyperbolic' $\BC^\times$-action is $\ol\CW{}^\lambda_\mu$,
and the composed projection $Q\to Z^\lambda$ acts as
$$y_i=a_1b_2,\ y_j=a_2b_3,\ y_{j,i}=a_1b_3,\ w_i=-a_1b_1,\ w_j=a_3b_3,\
w_i-w_j=a_2b_2.$$
The preimage of the open zastava $\oZ^\lambda\subset Z^\lambda$ is given by
$a_1\ne0\ne b_3$. The composition $\oZ^\lambda\hookrightarrow\CW^\lambda_\mu
\hookrightarrow\Gr^\lambda_{\on{SL}(3)}\to\CB$ (the flag variety of $\on{SL}(3)$)
is nothing but the evaluation at $0\in\BP^1$ morphism (viewing $\oZ^\lambda$
as based maps from $\BP^1$ to $\CB$).

The Picard group $\on{Pic}(\widetilde\CW{}^{\unl{\lambda}}_\mu)\simeq\BZ$, generated
by the first determinant bundle $\CL_1=\CalD_1$ that coincides with the
pullback of $\CO(1)$ from $\BP^2$. The global sections
$\Gamma(\widetilde\CW{}^{\unl{\lambda}}_\mu,\CalD_1)$ are the functions on the incidence quadric
$Q$ having weight 1 with respect to the hyperbolic $\BC^\times$.
In particular, this line bundle has $T$-eigensections $a_1,a_2,a_3$. The restriction of $a_1$ to
$\oZ^\lambda\subset\widetilde\CW{}^{\unl{\lambda}}_\mu$ is nowhere vanishing.
The restriction of $a_2$ vanishes along the divisor
$\on{div}(y_j)\subset\oZ^\lambda$, and the restriction of $a_3$ vanishes along
the divisor $\on{div}(w_j)\subset\oZ^\lambda$. Note that
$a_3=p_1^*z_{\lambda_1}\in\Gamma(\widetilde\CW{}^{\unl{\lambda}}_\mu,\CalD_1)$.
Furthermore, the section of the trivial line bundle (i.e.\ a function)
$p_2^*z_{\lambda_2}=-a_3b_1$.


\begin{NB3}
  `,' is added on June 12.
\end{NB3}%
Comparing with~\ref{computation}, we conclude that
(in our situation $\ialpha=1$)
$$z_i=-a_2b_1,\ z_j=a_3b_2,\ z_{j,i}=-a_3b_1,\ y_j^1=a_2,\ 'y_j^1=a_1,\ z_j^1=a_3.$$
From~\eqref{eq:rmn} we conclude that
$^1r^{0,0}=z_j^1=a_3$ (the fundamental class of the preimage of the cocharacter
$(0,1,0,0)\in X_*(\GL(V_j)\times T^{{\mathbf w}_j}\times\GL(V_i)\times
T^{{\mathbf w}_i})$).

\begin{NB3}
  I am afraid that we have a slight confusion here. HN introduces
  \emph{functions} $z_i$, $z_j$ in \ref{computation}. On the other
  hand, MF introduces a \emph{section} $z_{\lambda_s}$ in \ref{sdlb},
  which looks very similar.

  Anyway $z_{\lambda_1}$ should be $z^1_j$. And what is
  $z_{\lambda_2}$ ?  See l.\lineref{zlN}. In \ref{det via hom slice}
  MF will identify it with the class associated with
  $\varkappa = (1,1)$, lifted to $\tilde T$ by setting the $T$
  component to be $0$. This is how HN understands the
  convention. Since the overall shift is irrelevant, this will be
  identified with the function corresponding to $(-1,-1,0,0)$. It is
  equal to $z_{j,i}$ in \ref{computation}.
  \begin{NB2}
    Inserted into the previous paragraph.
    \end{NB2}%
\end{NB3}%

Similarly, the fundamental class of the preimage of the cocharacter
$(0,0,0,1)\in X_*(\GL(V_j)\times T^{{\mathbf w}_j}\times\GL(V_i)\times
T^{{\mathbf w}_i})$ is the section $b_1$ of the pullback of $\CO(1)$ from
$\BP(V^*)$ to $T^*\BP(V^*)=\widetilde\CW{}^{\unl{\lambda}'}_\mu$ where
$\unl{\lambda}'=(\omega_i,\omega_j)$.

More generally, the fundamental class of the preimage of the cocharacter
$(\min(k_1,k_2),\ k_1,\\ \min(k_1,k_2),\ k_2)$ restricted to $\oZ^\lambda$ vanishes
to the order $k_1-k_2$ at the divisor $w_j=0$ if $k_1\geq k_2$, and to the
order $k_2-k_1$ at the divisor $w_i=0$ if $k_1\leq k_2$, and is invertible
elsewhere, in particular at $w_i=w_j$. Hence for $k\geq0$ the fundamental
class of the preimage of the cocharacter
$(\min(k_1,k_2)-k,k_1,\min(k_1,k_2)-k,k_2)$ restricted to $\oZ^\lambda$
is invertible off the zero divisors of $w_i$ and $w_j$.
This follows from~\ref{eq:rmn} (note that $z_{j,i}$ is invertible at
the generic point of the divisor $w_i=w_j$).

\subsection{Determinant sheaves on slices via homology groups of fibers}
\label{det slice via}
We recall the setup of~\ref{quivar} and~\ref{defo}.
We set $G=\GL(V),\ G_F=T(W),\ \tilde{G}=G\times G_F$. The group $\tilde{G}$
acts on $\bN^\lambda_\mu$. According to~\ref{Coulomb_quivar}, the Coulomb branch
$\CM_C(G,\bN)$ is identified with $\ol\CW{}^{\lambda^*}_{\mu^*}$.
Our choice of basis of the character
lattice of $T(W)$ defines a cone of dominant coweights of $\GL(W)\supset T(W)$.
It is nothing but $\Lambda^+_F$ introduced in~\ref{dlb}.
For $\varkappa\in\Lambda^+_F$, the homology $H_*^{G_\CO}(\tilde\CR{}^\varkappa)$ forms
a module over the algebra $H_*^{G_\CO}(\CR)$, and for
$\varkappa\in\Lambda^{++}_F$ we want to identify this
module with $\Gamma(\wit\CW{}^{\unl\lambda^*}_{\mu^*},\CalD^{\varkappa})=
\Gamma(\ol\CW{}^{\lambda^*}_{\mu^*},\bpi_*\CalD^{\varkappa})$.
Here the assumption $\varkappa\in\Lambda^{++}_F$ is \emph{not}
essential, as we can renumber $i_1$, \dots, $i_N$ so that
$k_1\ge k_2\ge\dots\ge k_N$.
\begin{NB3}
    Add on June 12 by HN.
\end{NB3}%

First we consider the case $\varkappa=(1,\ldots,1,0,\ldots,0)$,
i.e.\ $\CalD^\varkappa=p_s^*\CL$. We allow $s=N$, so that $\varkappa=(1,\ldots,1)$.
\begin{NB3}
  Is the case $s=N$ included ? Then there will be no $0$.
  \begin{NB2}
    Corrected.
    \end{NB2}%
\end{NB3}%
Let $\bN_T$ denote $\bN$ regarded as a $T$-module. We have the
pushforward homomorphism
$\iota_*\colon H^{T_\cO}_*(\cR_{T,\bNT})\to H^{T_\cO}_*(\cR) =
H^{G_\cO}_*(\cR)\otimes_{H^*_G(\mathrm{pt})}
H^*_T(\mathrm{pt})$ of the inclusion $\cR_{T,\bNT}\to \cR$
(see \ref{sec:bimodule}). We set $\tilde{T}:=T\times G_F=T\times T(W)$.
We consider $\tilde\pi_T\colon \cR_{\tilde T,\bNT}\to \Gr_{G_F}$,
and the fiber $\tilde\pi_T^{-1}(\varkappa)$. We have a natural inclusion
$\tilde\pi_T^{-1}(\varkappa)\to \tilde\pi^{-1}(\varkappa)=\tilde\CR{}^\varkappa$, denoted again by
$\iota$, and the pushforward homomorphism
\[
  \iota_*\colon H^{T_\cO}_*(\tilde\pi_T^{-1}(\varkappa))\to
  H^{G_\cO}_*(\tilde\CR{}^\varkappa)\otimes_{H^*_G(\mathrm{pt})} H^T_*(\mathrm{pt}).
\]

Let $\pi_T\colon \cR_{\tilde T,\bNT}\to \Gr_{\tilde T}$ be the projection. We lift
the coweight $\varkappa$ of $G_F$ to $\tilde T$ by setting the $\sw_{i,r}$-coordinate of
the $T$-component to be 0
\begin{NB3}
  This is \emph{not} true if $s=N$.
  \begin{NB2}
    Corrected on June 18, 2020.
    \end{NB2}%
\end{NB3}%
for any $\sw_{i,r},\ i\in Q_0,\ 1\leq r\leq a_i$.
Let us denote it by $\tilde\varkappa$. We consider the fundamental class of
$\pi_T^{-1}(\tilde\varkappa)$ and denote it by $\sz^\varkappa$.
\begin{NB3}
  Corrected from $\vk^0$ to $\tilde\vk$. June 20.
\end{NB3}%
By the localization theorem, it is nonvanishing over $\oA^{|\alpha|}$.
Note that the lift $\tilde\vk$ has different $T$ component from one in
\eqref{eq:bow8}. The class $\sz^\vk$ is different from
$\sy^\vk$ used in \ref{line Cherkis}.

We define a rational isomorphism
$\theta\colon \Gamma(\ol\CW{}^{\lambda^*}_{\mu^*},\bpi_*\CalD^{\varkappa})\dasharrow
H_*^{G_\CO}(\tilde\CR{}^\varkappa)$
by sending $z^\varkappa$ to $\iota_* \sz^\varkappa$. It is $S_\alpha$-equivariant,
hence it is indeed an isomorphism as above.

\begin{NB}
Recall that $\on{Spec}H^*_{G_\CO}(\on{pt})=\BA^\alpha\leftarrow\BA^{|\alpha|}=
\on{Spec}H^*_{T(V)_\CO}(\on{pt})$, and the base change under
$\BA^\alpha\leftarrow\BA^{|\alpha|}$ gives
$H_*^{T(V)_\CO}(\tilde\pi^{-1}(\varkappa))$, where $T(V)$ is a Cartan torus of
$G=\GL(V)$. If we further localize to $\oG_m^{|\alpha|}\subset\BA^{|\alpha|}$,
we have a localization isomorphism
$\bz^*\iota_*^{-1}\colon H_*^{T(V)_\CO}(\tilde\pi^{-1}(\varkappa))_{\on{loc}}\iso
H_*^{T(V)_\CO}(\hat\pi^{-1}(\varkappa))_{\on{loc}}$ where
$\hat\pi\colon\Gr_{T(V)\times G_F}\to\Gr_{G_F}$ is the obvious projection.
But $H_*^{T(V)_\CO}(\hat\pi^{-1}(\varkappa))\cong H_*^{T(V)_\CO}(\Gr_{T(V)})=
\BC[\BA^{|\alpha^*|}\times\BG_m^{|\alpha^*|}]$ by~\ref{rem:W-cover}(2).
All in all, we obtain an $S_\alpha$-equivariant trivialization
\begin{equation}
\label{costalk triv slice}
i^!_\varkappa\tilscA^{\on{for}}|_{\overset{\circ}\BG{}_m^{|\alpha|}}\cong
\CO_{\overset{\circ}\BG{}_m^{|\alpha|}\times\BG_m^{|\alpha|}}.
\end{equation}
Composing with the trivialization $\tau$, we obtain a rational
isomorphism of $\BC[\ol\CW{}^{\lambda^*}_{\mu^*}]$-modules
$\theta\colon \Gamma(\ol\CW{}^{\lambda^*}_{\mu^*},\bpi_*\CalD^{\varkappa})\dasharrow
i^!_\varkappa\tilscA^{\on{for}}$.
\end{NB}%

\begin{Theorem}
  \label{det via hom slice}
  \begin{NB3}
    Here $\varkappa = (1,\dots,1,0,\dots,0)$.
  \end{NB3}%
The rational isomorphism $\theta\colon \Gamma(\ol\CW{}^{\lambda^*}_{\mu^*},
\bpi_*\CalD^{\varkappa})\dasharrow H_*^{G_\CO}(\tilde\CR{}^\varkappa)$
extends to the regular isomorphism of $\BC[\ol\CW{}^{\lambda^*}_{\mu^*}]$-modules
$\theta\colon \Gamma(\ol\CW{}^{\lambda^*}_{\mu^*},
\bpi_*\CalD^{\varkappa})\iso H_*^{G_\CO}(\tilde\CR{}^\varkappa)$.
\end{Theorem}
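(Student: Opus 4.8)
The plan is to follow the standard scheme for extending a rational isomorphism of modules over a base scheme $\ol\CW{}^{\lambda^*}_{\mu^*}$, exactly as in the proofs of \ref{Coulomb_quivar} and \ref{det via hom}. Both sides are finitely generated modules over $\BC[\ol\CW{}^{\lambda^*}_{\mu^*}]=H_*^{G_\CO}(\CR)$; the right-hand side $H_*^{G_\CO}(\tilde\CR{}^\varkappa)$ is torsion-free (it embeds into its localization over $\oG_m^{|\alpha|}$), and the left-hand side $\Gamma(\ol\CW{}^{\lambda^*}_{\mu^*},\bpi_*\CalD^\varkappa)$ is the sections of the pushforward of a line bundle, hence also torsion-free on the normal variety $\ol\CW{}^{\lambda^*}_{\mu^*}$. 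Therefore the rational isomorphism $\theta$, which is already defined and $S_\alpha$-equivariant, is automatically injective where defined, and to see it is an isomorphism it suffices to check that it extends to a regular isomorphism over the generic points of each irreducible component of the boundary divisor $\BA^{|\alpha|}\setminus\oG_m^{|\alpha|}$ inside $\BA^{|\alpha|}$ (where we have composed with the factorization/structural map $\varpi$, as set up before the theorem). By normality and the codimension-$2$ argument, extending across all these codimension-$1$ points gives a global isomorphism.

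The local-model analysis at a generic boundary point proceeds by the same case division as in \cite[Th.~6.18]{2016arXiv160602002N} and \ref{thm:lbowisom}: the general point $t$ of the boundary divisor lies on one of the hyperplanes $w_{i,k}=w_{i,l}$ (two eigenvalues of the $i$-th gauge group collide), $w_{i,k}=w_{j,l}$ for adjacent vertices $i\sim j$, or $w_{i,k}=0$ (for $i$ with $\bw_i\neq 0$). Localizing at $t$ and using the factorization isomorphism \ref{prop:factoriz det} (or its corollary \ref{cor:factoriz det}) on the geometric side, together with the corresponding localization/factorization of $H_*^{G_\CO}(\tilde\CR{}^\varkappa)$ via the pushforward $\iota_*$ and the structural homomorphism from $H^*_T(\mathrm{pt})$, reduces the statement to a rank-one local model. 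In the collision-at-a-vertex cases we are reduced to the $\GL(2)$ (or $\CC^\times\times\CC^\times$) computation: the relevant local piece is a Kleinian surface $\wit\cS_{N_i}$ with $N_i=\langle\lambda,\alphavee_i\rangle$, and \ref{cor:factoriz det} already identifies $\CalD^\varkappa$ with $\CL_{\varkappa^{(i)}}$ there; one then invokes \ref{det via hom}, \ref{Hilbert S0}, and the explicit matching of sections in \ref{computation} (formula \eqref{eq:rmn}) and \ref{rem:ambiguity} to see that $\theta$ sends the canonical section $z^\varkappa$ to $\iota_*\sz^\varkappa$ up to an invertible function near $t$. In the case $w_{i,k}=0$ the local model is the Kleinian-surface Coulomb branch studied in \ref{Klein via}, and the identification of the section $\sz^\varkappa$ with the product of the $v^{N-\ialpha}$'s is exactly \ref{rem:ambiguity}.

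The key input that glues these local pictures together is that both the geometric section $z^\varkappa=\bigotimes_s p_s^* z_{\lambda_s}^{k_s-k_{s+1}}$ (for $\varkappa\in\Lambda^{++}_F$, using \ref{sdlb}) and the homological class $\iota_*\sz^\varkappa$ are \emph{compatible with the factorization isomorphisms}: the section $z_{\lambda_s}$ is the canonical section of $\CL|_{\ol\Gr{}_G^{\lambda_s}}$ vanishing to the prescribed order along the semiinfinite orbits, and its pullback factors according to \ref{prop:factoriz det}, while $\sz^\varkappa$, being a fundamental class of a fiber of $\pi_T$, factors by the localization theorem and \ref{rem:W-cover}. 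Hence near a boundary point $t$ both sections differ from their factorized counterparts $z^{\prime\varkappa}$, $\sz^{\prime\varkappa}$ by nonvanishing regular functions, which is what lets us transport the local isomorphism to the global one. Finally we conclude by \ref{prop:flat} together with \ref{rem:conditions}: the hypotheses there are met because $\ol\CW{}^{\lambda^*}_{\mu^*}$ is normal, the convolution morphism $\bpi\colon\wit\CW{}^{\unl{\lambda^*}}_{\mu^*}\to\ol\CW{}^{\lambda^*}_{\mu^*}$ is semismall (so the complement of the locus where $\bpi$ is an isomorphism has codimension $\ge 2$, giving $\bpi_*\CalD^\varkappa\cong j_*(\bpi_*\CalD^\varkappa|_{\bullet})$), and all fibers of the structural map to $\BA^{\underline\bv}$ have the same dimension.

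I expect the main obstacle to be the bookkeeping in the $w_{i,k}=w_{j,l}$ case for adjacent vertices with $n\ge 2$: there one must track simultaneously the sections $y_j^\ialpha$, ${}'y_j^\ialpha$, $z_j^\ialpha$ and the fundamental classes ${}^\ialpha r^{m,n}$ through the formula \eqref{eq:rmn}, and verify that the factorized isomorphism $\theta$ really does match the canonical geometric section $z^\varkappa$ with $\iota_*\sz^\varkappa$ up to a \emph{single} scalar rather than a fixed-point-dependent one. This is precisely the content of \ref{computation}, so the work is to cite it correctly rather than to re-derive it; the potential subtlety is ensuring that the identification of line bundles coming from \ref{cor:factoriz det} (proved by comparing torus weights at fixed points, canonical up to an overall twist) is compatible with the section-level identification, so that no spurious ambiguity enters when assembling the global isomorphism.
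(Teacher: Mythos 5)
Your overall scheme is the same as the paper's: reduce to checking that $\theta$ extends across the generic points of the boundary divisor $\BA^{|\alpha|}\setminus\oG_m^{|\alpha|}$, use factorization to pass to local models at such points, and conclude with \ref{prop:flat} and \ref{rem:conditions} (semismallness of $\bpi$ plus the Cohen--Macaulay property supplying the codimension-$2$ condition). Two of your steps, however, do not match what is actually being proved. First, the theorem and the map $\theta$ defined just before it concern only $\varkappa=(1,\ldots,1,0,\ldots,0)$, i.e.\ $\CalD^\varkappa=p_s^*\CL$; arbitrary $\varkappa\in\Lambda^{++}_F$ is deliberately deferred to \ref{det via hom slice general}, where it is deduced by induction from this base case using \ref{tensor surj} and the tensor-product diagram. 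Running the argument for general $\varkappa$ in one pass, as you propose, would in particular force you to redo the case (c) analysis for a general dominant $\varkappa^{(i)}$ rather than a fundamental weight.

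Second, and more concretely, you misidentify the local model in the collision-at-a-vertex case $w_{i,r}=w_{i,s}$: for an $ADE$ quiver there is no adjoint matter, so the localized gauge theory is $(\GL(2)\times T'',\,0)$ with $W'=0$, and no Kleinian surface $\wit\cS_{N_i}$ and no Hilbert scheme appears there. The reduction you propose to \ref{det via hom} and \ref{Hilbert S0} belongs to the Jordan-quiver/bow-variety setting of \ref{thm:lbowisom} and is not available here; the paper instead disposes of this case in two lines by noting that the zero divisor of $z^{\prime\varkappa}$ is the union of the zero divisors of $w_{i,1}$ and $w_{i,2}$, so both $z^{\prime\varkappa}$ and $\sz^{\prime\varkappa}$ are invertible at generic points of the diagonal. (The Kleinian surface genuinely occurs only in case (c), $w_{i,r}=0$, where the paper argues not via \ref{rem:ambiguity} but by comparing $\BC^\times\times\BC^\times$-eigencharacters: the eigenspaces are one-dimensional by \ref{monopole Klein}, $\sz^{\prime\varkappa^{(i)}}$ has character $x^nt^n$, and $z^{\prime\omega_n}$ has the same character because it is a highest-weight vector with minimal $t$-exponent. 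Your route is in the same spirit but is the one used for bow varieties, not the one given here.) The adjacent-vertex case (a) you treat correctly via \ref{example} and \eqref{eq:rmn}.
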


\begin{proof}
We follow the standard scheme, see e.g.\ the proof of~\ref{Coulomb_quivar}.
We have to check that $\theta$ extends through the general points of the
boundary divisor $\BA^{|\alpha|}\setminus\oG_m^{|\alpha|}$. Namely,

\begin{aenume}
      \item $w_{j,s}(t) = w_{i,r}(t)$ for some $i\ne j$ connected by an edge, $r$,
$s$, but all
    others are distinct. Moreover $w_{k,p}(t)\neq 0$ if $N_k\ne0$.
  \item $w_{i,r}(t) = w_{i,s}(t)$ for distinct $r$, $s$ and some $i$,
    but all others are distinct. Moreover $w_{j,p}(t)\ne0$ if $N_j\ne0$.
  \item All pairs like in (a),(b) are distinct, but $w_{i,r}(t) = 0$
    for $i$ with $N_i\ne0$.
\end{aenume}
The gauge theory $(G,\bN,\tilde G)$ with the flavor symmetry group $\tilde G$ is
replaced by $(Z_G(t),\bN^t,Z_{\tilde G}(t))$. In our case,
$Z_{\tilde G}(t)=Z_G(t)\times T(W)$, and
\(
    (Z_G(t),\bN^t) = (\GL(V') \times T'', \bN(V',W')),
\)
where $V'$, $W'$ are given below, $V=V'\oplus V''$ and $T''$ acts trivially on
$\bN(V',W')$:
\begin{aenume}
\item $W' = 0$, $V'_j=\BC=V'_i$ and other
  entries are $0$.
  \item $W' = 0$, $V'=\BC^2$ and other
    entries are $0$.
    \item $V'_i = 1$, $W'_i=\BC^{N_i}$ and other entries are $0$.
\end{aenume}
The extra factor $T(W)$ acts trivially in (a),(b), while
it acts through $T(W) \to T(W_i)$ in~(c).

By the same argument as in the proofs of \ref{Coulomb_quivar},
both $z^\varkappa$ and $\sz^\varkappa$ are related to $z^{\prime\varkappa}$,
$\sz^{\prime\varkappa}$ by nonvanishing regular functions defined on a
neighborhood of $t$ in $\BA^{|\alpha|}$ under the factorization.
Therefore it is enough to check that the isomorphism $\theta$ extends
for the local models (a),(b),(c) above.

(a) According to~\ref{example}, both $z^{\prime\varkappa}$ and $\sz^{\prime\varkappa}$
are invertible at the general points of the divisor $w_i=w_j$ (recall that
we assume $w_i\ne0\ne w_j$).

(b) The zero divisor of $z^{\prime\varkappa}$ is the union of the zero divisors
of $w_{i,1}$ and $w_{i,2}$; in particular, $z^{\prime\varkappa}$ is invertible at the
general points of the divisor $w_{i,1}=w_{i,2}$ (recall that we assume
$w_{i,1}\ne0\ne w_{i,2}$). The homology class $\sz^{\prime\varkappa}$ is invertible
as well.

(c) We make use of the $\BC^\times\times\BC^\times$-action on $\wit\cS_{N_i}$
of~\ref{line Klein}. A dominant weight $\lambda$ of~\ref{line Klein} is now
$\varkappa^{(i)}=\omega_n=(1,\ldots,1,0,\ldots,0)$ ($n$ 1's, and we allow the possibility
$n=N_i$, when $\varkappa^{(i)}=(1,\ldots,1)$).
\begin{NB3}
  Corrected on June 18, 2020.
  \end{NB3}%
The fundamental class
$\sz^{\prime\varkappa^{(i)}}$ is an eigenvector of $\BC^\times\times\BC^\times$ with the
eigencharacter $x^nt^n$.
Since all the eigenspaces are 1-dimensional, it suffices to check that
$z^{\prime\varkappa^{(i)}}$ has the same eigencharacter.
Now the $x$-character of $z^{\prime\omega_n}$ is $x^n$ since $z^{\prime\omega_n}$ is
a highest vector of the irreducible $\GL(2)$-module with highest weight $(n,0)$.
The exponent of the $t$-character of $z^{\prime\omega_n}$ is minimal among all such
exponents with the fixed $x$-character. Hence the $t$-character of
$z^{\prime\omega_n}$ is $t^n$.

For the sake of completeness, note that the divisor of $z^{\prime\omega_n}$ is the
union of $E_1,\ldots,E_{n-1}$ and the strict transform of $\{Z=0\}$ (notation of the beginning
of~\ref{sec:mult} and \ref{line Klein}).
\begin{NB3}
  Corrected on June 18, 2020. And added on June 19 by HN.
\end{NB3}%

We conclude by an application of~\ref{prop:flat} and~\ref{rem:conditions}.
The condition $\varPi_*\bpi_*\CalD^{\varkappa}\iso
j_*\varPi_*\bpi_*\CalD^{\varkappa}|_{(\wit\CW{}^{\unl\lambda^*}_{\mu^*})^\bullet}$
of~\ref{rem:conditions} is satisfied since $\wit\CW{}^{\unl\lambda^*}_{\mu^*}$ is
Cohen-Macaulay, and the complement of
$(\wit\CW{}^{\unl\lambda^*}_{\mu^*})^\bullet$
in $\wit\CW{}^{\unl\lambda^*}_{\mu^*}$ is of codimension $2$. The latter claim
follows from the semismallness of
$\bpi\colon \wit\CW{}^{\unl\lambda^*}_{\mu^*}\to\ol\CW{}^{\lambda^*}_{\mu^*}$ as in
the proof of~\ref{semismall}, and the Cohen-Macaulay property is proved the
same way as in~\ref{lem:CMslice} and~\ref{lem:CMBD}.
\end{proof}

\begin{NB}
The following is moved to the beginning.

  We choose isomorphisms 
$\Gamma(\cS_N,\CF_\lambda)\iso i^!_\lambda\tilscA^{\on{for}}$ for any $\lambda$
(defined uniquely up to multiplication by a scalar).

\begin{Lemma}
\label{tensor surj old}
The multiplication morphism $\Gamma(\cS_N,\CF_\lambda)\otimes\Gamma(\cS_N,\CF_\mu)
\to\Gamma(\cS_N,\CF_{\lambda+\mu})$ (resp.\ $i^!_\lambda\tilscA^{\on{for}}
\otimes i^!_\mu\tilscA^{\on{for}}\to i^!_{\lambda+\mu}\tilscA^{\on{for}}$) is surjective
for any dominant $\lambda,\mu$.
\end{Lemma}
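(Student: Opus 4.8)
The plan is to reduce to the case where $\mu=\omega_n$ is a fundamental coweight and then to settle that case by comparing $\BC^\times\times\BC^\times$-characters, using that $\Gamma(\cS_N,\CF_\nu)$ (equivalently $H^{G_\cO}_*(\tilde\cR^\nu)$) is multiplicity free as a $\BC^\times\times\BC^\times$-module by~\ref{monopole Klein}. For the reduction: every dominant weight $\mu$ of $\SL(N)$ is a nonnegative combination $\sum_i(\mu_i-\mu_{i+1})\omega_i$ of fundamental coweights, so one argues by induction on $\sum_i(\mu_i-\mu_{i+1})$. The case $\mu=0$ is the tautological module structure map $\Gamma(\cS_N,\CF_\lambda)\otimes_\CC\CC[\cS_N]\to\Gamma(\cS_N,\CF_\lambda)$. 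For $\mu=\mu'+\omega_n$, tensoring the surjection $\Gamma(\cS_N,\CF_\lambda)\otimes\Gamma(\cS_N,\CF_{\mu'})\to\Gamma(\cS_N,\CF_{\lambda+\mu'})$ (induction hypothesis) with $\Gamma(\cS_N,\CF_{\omega_n})$ and composing with the surjection $\Gamma(\cS_N,\CF_{\lambda+\mu'})\otimes\Gamma(\cS_N,\CF_{\omega_n})\to\Gamma(\cS_N,\CF_{\lambda+\mu})$ (the fundamental-coweight case) yields a surjection that factors through $\Gamma(\cS_N,\CF_\lambda)\otimes\Gamma(\cS_N,\CF_{\mu})\to\Gamma(\cS_N,\CF_{\lambda+\mu})$, forcing the latter to be surjective. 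The identical manipulation works verbatim for the modules $H^{G_\cO}_*(\tilde\cR^\bullet)$.

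For $\mu=\omega_n$ on the sheaf side: by~\ref{monopole Klein} the $\BC^\times\times\BC^\times$-character of $\Gamma(\cS_N,\CF_{\lambda+\omega_n})$ equals $\sum_{m\in\BZ}x^{\sum_i((\lambda+\omega_n)_i-m)}t^{\sum_i|(\lambda+\omega_n)_i-m|}$ and is multiplicity free, so each $m\in\BZ$ names a one-dimensional eigenspace. I would then match each such eigenspace with a product of eigensections: pick an eigensection $e_{m'}\in\Gamma(\cS_N,\CF_\lambda)$ of bidegree indexed by $m'$ and $f_{m''}\in\Gamma(\cS_N,\CF_{\omega_n})$ of bidegree indexed by $m''$, where $(m',m'')=(m-1,1)$ if $m\geq\lambda_n+1$ and $(m',m'')=(m,0)$ if $m\leq\lambda_n$. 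Since $m=m'+m''$ in both cases the $x$-exponents add; the $t$-exponent identity $\sum_i|(\lambda+\omega_n)_i-m|=\sum_i|\lambda_i-m'|+\sum_i|(\omega_n)_i-m''|$ is a one-line case check using dominance of $\lambda$ (in the first case the shortfall $N-n$ coming from the coordinates $i>n$ is compensated by $\sum_i|(\omega_n)_i-1|=N-n$; in the second the shortfall $n$ coming from the coordinates $i\leq n$ is compensated by $\sum_i|(\omega_n)_i|=n$). Because multiplication is $\BC^\times\times\BC^\times$-equivariant and $\bigoplus_\nu\Gamma(\wit\cS_N,\cL_\nu)$ sits inside the function field of the integral variety $\wit\cS_N$, the product $e_{m'}f_{m''}$ is a nonzero element of the one-dimensional eigenspace indexed by $m$, hence spans it; letting $m$ range over $\BZ$ exhausts all eigenspaces, so the map is surjective.

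For $H^{G_\cO}_*(\tilde\cR^\bullet)$ the monopole formula gives these modules the same bigraded characters, so they are again multiplicity free and the multiplication respects the bigrading; repeating the choice of $e_{m'},f_{m''}$, the only remaining point is that $e_{m'}f_{m''}\neq0$. This is the step I expect to be the main obstacle, since a character count alone cannot rule out the multiplication killing a product of nonzero eigenvectors — on the sheaf side it is automatic by integrality of $\wit\cS_N$, but on the homology side one must invoke the localization theorem: the induced map $H^{G_\cO}_*(\tilde\cR^\lambda)\otimes_{H^{G_\cO}_*(\cR)}H^{G_\cO}_*(\tilde\cR^\mu)\to H^{G_\cO}_*(\tilde\cR^{\lambda+\mu})$ of modules over $H^{G_\cO}_*(\cR)=\CC[\cS_N]$ is generically an isomorphism (all three modules being generically free of rank one over the two-dimensional Coulomb branch), whence the product of generically generating eigenvectors is generically nonzero and so nonzero. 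The eigenspace-exhaustion argument of the previous paragraph then applies as before, and everything else is routine exponent bookkeeping.
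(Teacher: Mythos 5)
Your proposal is correct and follows essentially the same route as the paper's proof: reduction to $\mu=\omega_n$, the multiplicity-free $\BC^\times\times\BC^\times$-character from the monopole formula, the identical choice $(m',m'')=(m-1,1)$ or $(m,0)$ according to whether $m\geq\lambda_n+1$ or $m\leq\lambda_n$, and the localization theorem to guarantee that products of nonzero eigenvectors are nonzero on the homology side. The only difference is that you spell out the induction behind ``it suffices to consider $\mu=\omega_n$,'' which the paper leaves implicit.
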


\begin{proof}
It suffices to consider the case $\mu=\omega_n=(1,\ldots,1,0,\ldots,0)$.
Recall that the $\BC^\times\times\BC^\times$-character of 
$\Gamma(\cS_N,\CF_{\lambda+\mu})$ given by~\ref{monopole Klein} is multiplicity
free. So it suffices to represent each summand
$x^{\sum_{i=1}^N((\lambda+\omega_n)_i-m)}t^{\sum_{i=1}^N|(\lambda+\omega_n)_i-m|}$ as a product
of two summands $x^{\sum_{i=1}^N(\lambda_i-m')}t^{\sum_{i=1}^N|\lambda_i-m'|}$ and
$x^{\sum_{i=1}^N((\omega_n)_i-m'')}t^{\sum_{i=1}^N|(\omega_n)_i-m''|}$. Now if 
$m\geq\lambda_n+1$, we take $m'=m-1,\ m''=1$, and if $m\leq\lambda_n$, we take
$m'=m,\ m''=0$. The same argument works for the costalks of $\tilscA^{\on{for}}$
due to the monopole formula.
\end{proof}

\begin{Lemma}
\label{tensor Klein old}
The diagram 
$$\begin{CD}
\Gamma(\cS_N,\CF_\lambda)\otimes_{\BC[\cS_N]}\Gamma(\cS_N,\CF_\mu) @>\sim>>
i^!_\lambda\tilscA^{\on{for}}\otimes_{i^!_0\tilscA^{\on{for}}}i^!_\mu\tilscA^{\on{for}} \\
@VVV @VVV \\
\Gamma(\cS_N,\CF_{\lambda+\mu}) @>\sim>> i^!_{\lambda+\mu}\tilscA^{\on{for}}
\end{CD}$$
commutes up to multiplication by a scalar for any dominant $\lambda,\mu$.
\end{Lemma}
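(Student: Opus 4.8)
The plan is to reduce the commutativity to a one-dimensional statement at the generic point of $\cS_N$ and then conclude by a torus-rigidity argument.

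First I would record that the two horizontal arrows are isomorphisms: they are induced by the chosen identifications $\Gamma(\cS_N,\CF_\nu)\iso H^{G_\cO}_*(\tilde\cR^\nu)$ (equivalently $\iso i^!_\nu\tilscA^{\on{for}}$) for $\nu\in\{\lambda,\mu,\lambda+\mu\}$. So what must be checked is that the square built from the two vertical maps — tensor product of sections of line bundles on the left, convolution product on the right — commutes up to a scalar. Next I would observe that each vertical map kills torsion: the targets $\Gamma(\cS_N,\CF_{\lambda+\mu})$ and $H^{G_\cO}_*(\tilde\cR^{\lambda+\mu})$ are torsion-free $\BC[\cS_N]=H^{G_\cO}_*(\cR)$-modules (the former because $\CF_{\lambda+\mu}$ is a torsion-free coherent sheaf, the latter being a costalk of $\tilscA^{\on{for}}$). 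Conversely, over the smooth locus $\cS_N^\circ$ the sheaves $\CF_\lambda$, $\CF_\mu$ are line bundles, hence $\CF_\lambda\otimes\CF_\mu$ is a line bundle and $\CF_\lambda\otimes\CF_\mu\to\CF_{\lambda+\mu}$ is an isomorphism there; on the homology side the corresponding generic isomorphism is the one furnished by the localization theorem, exactly as in the proof of \ref{tensor surj}. It follows that the kernel of each vertical map is precisely the torsion of its source, so it suffices to prove the commutativity after localizing at the generic point of $\cS_N$.

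There all four modules are free of rank $1$ and both vertical maps are isomorphisms of such. I would then bring in the $\BC^\times\times\BC^\times$-action — the flavor $\BC^\times$ together with the homological (loop-rotation) grading — under which all four modules and both multiplication maps are equivariant, and under which, by \ref{monopole Klein} (resp. the monopole formula), the bigraded character of $H^{G_\cO}_*(\tilde\cR^{\lambda+\mu})$ is multiplicity free. Taking any single $\BC^\times\times\BC^\times$-eigenvector of the localized upper-left module, its two images in $H^{G_\cO}_*(\tilde\cR^{\lambda+\mu})$ under the two composites are eigenvectors with the same bicharacter, hence proportional; since that eigenvector generates the localized module over the fraction field, the two composites agree up to this one scalar. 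The variant phrased with $i^!_\nu\tilscA^{\on{for}}$ is proved by the identical argument.

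The step I expect to be the main obstacle is the ``kernel $=$ torsion'' identification, i.e.\ having simultaneously that the two bottom modules are torsion-free and that both vertical multiplication maps are generic isomorphisms. On the sheaf side this is immediate from the $\CF_\nu$ being line bundles over $\cS_N^\circ$; on the homology side it rests on the localization theorem as used in \ref{tensor surj}. Once this is granted, the remaining ``rank one plus multiplicity-free torus weights'' rigidity is routine, and no honest calculation is needed.
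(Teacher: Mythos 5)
Your proposal is correct and follows essentially the same route as the paper's proof: identify the kernels of both vertical maps with the torsion of the upper row, reduce to the generic point where all four modules are free of rank one, and conclude by comparing a single $\BC^\times\times\BC^\times$-eigensection using the multiplicity-free property of the character of $H^{G_\cO}_*(\tilde\cR^{\lambda+\mu})$. The extra detail you supply on why kernel equals torsion (torsion-free targets plus generic isomorphism via the localization theorem) is exactly what the paper leaves implicit.
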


\begin{proof}
The kernels of both vertical morphisms coincide with the torsion in the upper
row. Thus it suffices to check the claim generically. But generically all the
four modules in question are free of rank 1. So it suffices to check the
commutativity for a single $\BC^\times\times\BC^\times$-eigensection of 
$\Gamma(\cS_N,\CF_\lambda)\otimes_{\BC[\cS_N]}\Gamma(\cS_N,\CF_\mu)$, and this 
follows from the multiplicity free property of $i^!_{\lambda+\mu}\tilscA^{\on{for}}$.
\end{proof}
\end{NB}%

Now we construct an isomorphism 
$\theta_\varkappa\colon \Gamma(\ol\CW{}^{\lambda^*}_{\mu^*},
\bpi_*\CalD^{\varkappa})\iso H_*^{G_\CO}(\tilde\CR{}^\varkappa)$ for arbitrary
$\varkappa\in\Lambda^{++}$ inductively, with~\ref{det via hom slice} as the
base of induction. More precisely, we write $\varkappa=\sum_l\varkappa_l$,
where each $\varkappa_l$ is of the form $(1,\ldots,1,0,\ldots,0)$ considered
in~\ref{det via hom slice}.

\begin{Theorem}
  \label{det via hom slice general}
  There is a unique isomorphism
  $\theta_\varkappa\colon \Gamma(\ol\CW{}^{\lambda^*}_{\mu^*},
  \bpi_*\CalD^{\varkappa})\iso H_*^{G_\CO}(\tilde\CR{}^\varkappa)$ making the
  following diagram commutative:
  \begin{equation*}
    \begin{CD}
{\bigotimes\limits^l}_{\BC[\ol\CW{}^{\lambda^*}_{\mu^*}]}\Gamma(\ol\CW{}^{\lambda^*}_{\mu^*},
  \bpi_*\CalD^{\varkappa_l}) @>\sim>{\bigotimes\limits^l\theta_{\varkappa_l}}>
{\bigotimes\limits^l}_{H_*^{G_\CO}(\CR)}H_*^{G_\CO}(\tilde\CR{}^{\varkappa_l}) \\
@VVV @VVV \\
\Gamma(\ol\CW{}^{\lambda^*}_{\mu^*},
  \bpi_*\CalD^{\varkappa}) @>\sim>> H_*^{G_\CO}(\tilde\CR{}^\varkappa).
  \end{CD}
  \end{equation*}
\end{Theorem}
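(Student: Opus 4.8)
The plan is to reduce the assertion to comparing a single distinguished section on each side, exploiting that all the modules in the diagram are torsion-free over $\BC[\ol\CW{}^{\lambda^*}_{\mu^*}]$ and generically of rank $1$.

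First I would record the necessary torsion-freeness and rank-one facts. The module $\Gamma(\ol\CW{}^{\lambda^*}_{\mu^*},\bpi_*\CalD^{\varkappa})=\Gamma(\wit\CW{}^{\unl\lambda^*}_{\mu^*},\CalD^{\varkappa})$ is torsion-free over $\BC[\ol\CW{}^{\lambda^*}_{\mu^*}]$ (global sections of a line bundle on a variety), and so is $H_*^{G_\CO}(\tilde\CR{}^{\varkappa})$; both are generically free of rank $1$, with canonical generators $z^\varkappa$ (from~\ref{sdlb}) and $\iota_*\sz^\varkappa$ (from~\ref{det slice via}), nonvanishing over $\oA^{|\alpha|}$. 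The same applies to the two tensor products in the top row of the diagram, which are generated generically by $\bigotimes_l z^{\varkappa_l}$ and $\bigotimes_l\iota_*\sz^{\varkappa_l}$ respectively. I fix once and for all the decomposition $\varkappa=\sum_{s}(k_s-k_{s+1})\cdot(1^s,0^{N-s})$ so that the left vertical multiplication map (tensor product of sections, call it $m_\Gamma$) sends $\bigotimes_l z^{\varkappa_l}$ to $z^\varkappa$ literally; write $m_H$ for the right vertical convolution product. Since a $\BC[\ol\CW{}^{\lambda^*}_{\mu^*}]$-linear map out of a generically rank-one torsion-free module into a torsion-free module is determined by its value on the generator (clear denominators and use torsion-freeness of the target), two things follow: \emph{uniqueness} of $\theta_\varkappa$ --- any such $\theta_\varkappa$ satisfies $\theta_\varkappa(z^\varkappa)=m_H(\bigotimes_l\iota_*\sz^{\varkappa_l})$ and is thereby determined --- and the reduction of the commutativity of the whole square to this one identity, because $\theta_\varkappa\circ m_\Gamma$ and $m_H\circ(\bigotimes_l\theta_{\varkappa_l})$, both $\BC[\ol\CW]$-linear maps out of the top-left module, coincide as soon as they agree on $\bigotimes_l z^{\varkappa_l}$.

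For existence I would take $\theta_\varkappa$ to be the rational isomorphism $m_H\circ(\bigotimes_l\theta_{\varkappa_l})\circ m_\Gamma^{-1}$, which is defined over $\oA^{|\alpha|}$: there every module is trivialized, the convolution product on the Coulomb side is the group-algebra-of-coweights product of fundamental classes on $\Gr_{\tilde T}$ (cf.~\ref{sec:abelian}), and under the trivializations it matches the tensor product of sections on the geometric side. With this definition the identity of the previous paragraph holds over $\oA^{|\alpha|}$ by~\ref{det via hom slice} applied to the fundamental coweights $\varkappa_l$ (which gives $\theta_{\varkappa_l}(z^{\varkappa_l})=\iota_*\sz^{\varkappa_l}$), so it will hold everywhere once $\theta_\varkappa$ is known to be regular. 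To prove regularity I would rerun the extension scheme of the proof of~\ref{det via hom slice}: one checks that $\theta_\varkappa$ extends through the general point $t$ of each boundary divisor of $\BA^{|\alpha|}$. By~\ref{prop:factoriz}, \ref{lem:factoriz} and~\ref{prop:factoriz det} the picture near $t$ factorizes into the local models (a)--(c) of that proof; in cases (a) and (b) the relevant line bundle becomes trivial and $\theta_\varkappa$ reduces to already-treated situations, while in case (c) it reduces to the Kleinian surface $\wit\cS_{N_i}$ carrying $\CL_{\varkappa^{(i)}}$, where the compatibility of the tensor product of sections with the convolution product is precisely~\ref{tensor Klein} (normalized as in~\ref{rem:ambiguity}) and the identification of line bundles is~\ref{cor:factoriz det}. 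Finally, regularity on the nose (rather than only up to codimension $2$) follows from~\ref{prop:flat} and~\ref{rem:conditions}, since $\wit\CW{}^{\unl\lambda^*}_{\mu^*}$ is Cohen--Macaulay and $\bpi$ is semismall, so the complement of the good locus $(\wit\CW{}^{\unl\lambda^*}_{\mu^*})^\bullet$ has codimension $2$, exactly as in~\ref{det via hom slice}.

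With $\theta_\varkappa$ regular, the commutativity of the square and the uniqueness of $\theta_\varkappa$ follow from the first paragraph. I expect the principal difficulty to be the bookkeeping of \emph{canonicity} in the extension step: one has to verify that the line bundle isomorphisms produced by factorization agree on the overlaps entering the local models (the point already flagged around~\ref{cor:factoriz det} and in the ``canonical'' clauses of~\ref{prop:factoriz det}), so that $m_H\circ(\bigotimes_l\theta_{\varkappa_l})\circ m_\Gamma^{-1}$ is genuinely induced by a regular morphism near each boundary divisor and not merely by an identification of generic fibers. Granting that, existence, commutativity, and uniqueness are all formal.
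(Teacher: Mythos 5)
Your proposal is correct, but it takes a noticeably more laborious route than the paper's own proof. You define $\theta_\varkappa$ as the rational map $m_H\circ(\bigotimes_l\theta_{\varkappa_l})\circ m_\Gamma^{-1}$ and then rerun the entire boundary-divisor analysis of \ref{det via hom slice} — factorization to the local models (a)--(c), the Kleinian compatibility \ref{tensor Klein} normalized as in \ref{rem:ambiguity} in case (c), and finally \ref{prop:flat} and \ref{rem:conditions} — to show the map is regular. The paper instead observes that over the good locus $(\ol\CW{}^{\lambda^*}_{\mu^*})^\bullet$ (complement of codimension $2$) \emph{both} vertical maps of the diagram are surjective with kernel equal to the torsion submodule: this follows from factorization together with \ref{tensor surj} alone. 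Hence $j^*\theta_\varkappa$ can be \emph{defined} as the map induced on the quotients modulo torsion, and $\theta_\varkappa:=j_*j^*\theta_\varkappa$ is regular because the $j_*j^*$-saturation of both modules was already established in the proof of \ref{det via hom slice}; commutativity of the square is then automatic from the construction, and \ref{tensor Klein} is not needed at all in this step. What your approach buys is an explicit characterization of $\theta_\varkappa$ by its value $z^\varkappa\mapsto m_H(\bigotimes_l\iota_*\sz^{\varkappa_l})$ on the distinguished section, and your uniqueness argument (a map out of a generically rank-one torsion-free module into a torsion-free module is determined by its value on a generic generator) is cleaner and more explicit than the paper's, which leaves uniqueness implicit in the surjectivity of the vertical arrows. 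What the paper's route buys is economy: all the hard geometry is quoted from \ref{det via hom slice} rather than redone, and the only new input is the surjectivity statement \ref{tensor surj}. One small caution about your write-up: in cases (a) and (b) the line bundle does not literally become trivial on the local model; what is true (and what you need) is that the relevant sections are invertible near the general point of the divisor in question, exactly as in the proof of \ref{det via hom slice}.
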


\begin{proof}
Assume $\varkappa=\varkappa'+\varkappa''$, and 
$\theta_{\varkappa'},\theta_{\varkappa''}$ are already constructed.
Then we restrict to $(\ol\CW{}^{\lambda^*}_{\mu^*})^\bullet\stackrel{j}
\hookrightarrow\ol\CW{}^{\lambda^*}_{\mu^*}$, and note that
$j^*\bpi_*\CalD^\varkappa$ is the quotient of 
$j^*\bpi_*\CalD^{\varkappa'}\otimes j^*\bpi_*\CalD^{\varkappa''}$ modulo torsion,
due to factorization and~\ref{tensor surj}. Similarly,
$j^*H_*^{G_\CO}(\tilde\CR{}^\varkappa)$ is the quotient of
$j^*H_*^{G_\CO}(\tilde\CR{}^{\varkappa'})\otimes j^*H_*^{G_\CO}(\tilde\CR{}^{\varkappa''})$
modulo torsion. So we define $j^*\theta_\varkappa$ as the quotient of
$j^*\theta_{\varkappa'}\otimes j^*\theta_{\varkappa''}$ modulo torsion. Finally,
we define $\theta_\varkappa$ as $j_*j^*\theta_\varkappa$.
The resulting diagram commutes thanks to \ref{tensor Klein}.
\begin{NB3}
  Added by HN on June 19.
\end{NB3}%
\end{proof}


\bibliographystyle{myamsalpha}
\bibliography{nakajima,mybib,coulomb}

\end{document}